\numberwithin{equation}{section}
\headsep \linespread{1.1} \textheight 24cm
\newtheorem{lemma}{Lemma}[section]
\newtheorem{definition}{Definition}[section]
\newtheorem{theorem}[definition]{Theorem}
\newtheorem{remark}{Remark}[section]
\begin{document}
\title{Vanishing shear viscosity  and boundary layers for  plane magnetohydrodynamics flows}
\author{Wenshu Zhou$^1$\thanks{Corresponding author. \newline E-mail address:   mathpde@163.com (W.Zhou), qin\_xulong@163.com(X. Qin); mathqcy@163.com (C. Qu) } \quad Xulong Qin$^2$\quad Chengyuan Qu$^1$\\
 \small 1. Department of Mathematics, Dalian Nationalities University,  Dalian 116600, China\\
 \small  2. Department of Mathematics, Sun Yat-sen University, Guangzhou 510275,  China\\}


\date{}
 \maketitle

\begin{abstract}
\small{In this paper, we consider an initial-boundary problem for  plane magnetohydrodynamics
flows under the general condition on the heat conductivity $\kappa$
that may depend on both the density $\rho$ and the temperature $\theta$ and satisfies
$$
\kappa(\rho,\theta)\geq\kappa_1(1+\theta^{q}) \quad \hbox{\rm with
constants}~ \kappa_1>0 ~\hbox{\rm and}~ q>0.
$$
We prove the global existence of strong solutions for large initial
data  and justify the passage to the limit as the shear viscosity
$\mu$ goes to zero.   Furthermore, the value $\mu^\alpha$ with any $0<\alpha<1/2$ is established for the boundary layer  thickness. }

\medskip
 {\bf Keywords}. plane magnetohydrodynamics
flows;  global existence; vanishing shear viscosity;  boundary layer.

\medskip
 {\bf 2010 MSC}. 35B40; 35B45; 76N10; 76N20;  76W05; 76X05.
\end{abstract}

%
\bigbreak

\section{Introduction}Magnetohydrodynamics (MHD) concerns the motion of conducting fluids in
an electromagnetic field and has a very broad range of applications.
The dynamic motion of the fluid and the magnetic field interact
strongly on each other, so the hydrodynamic and electrodynamic
effects are coupled, which make the problem considerably
complicated.
The plane MHD flows  with constant longitudinal magnetic field, which are   three-dimensional MHD flows
  uniform in the transverse
direction, are governed by the following equations:

 \begin{equation}\label{e1}
  \begin{split}
 &\rho_t+(\rho u)_x=0,\\
 &(\rho u)_t+\left(\rho u^2+p+\frac12 |\mathbf{b}|^2\right)_x
 =(\lambda u_{x})_x,\\[1mm]
 &(\rho \mathbf{w})_t+(\rho u \mathbf{w}-\mathbf{b})_x=(\mu\mathbf{w}_{x})_x,\\[1mm]
 &\mathbf{b}_t+(u\mathbf{b}-\mathbf{w})_x=(\nu\mathbf{b}_{x})_x,\\[1mm]
& (\rho e)_t+(\rho  u e)_x-(\kappa\theta_x)_x+pu_x={\cal Q},\\[1mm]
&{\cal Q}:=\lambda u_x^2+\mu
|\mathbf{w}_x|^2+\nu |\mathbf{b}_x|^2,
 \end{split}
 \end{equation}
  where  $\rho$ denotes the density of the flow, $\theta$  the temperature,  $u\in \mathbb{R}$
  the longitudinal velocity, $\mathbf{w}=(w_1,w_2)\in \mathbb{R}^2$ transverse velocity,
  $\mathbf{b}=(b_1,b_2)\in \mathbb{R}^2$ transverse magnetic field, $p=p(\rho,\theta)$ the pressure,
  $e=e(\rho,\theta)$ the internal energy, and $\kappa=\kappa(\rho,\theta)$   the heat conductivity.
 The coefficients $\lambda$, $\mu$ and $\nu$ standing for the bulk viscosity, shear viscosity and
 the magnetic diffusivity, respectively,  are assumed to be positive constants in this paper.
 We  focus on the perfect gas
 with the equations of state:
 \begin{equation}\label{e2}
 p=\gamma\rho \theta, \qquad e=c_v\theta,
 \end{equation}
where the constants $\gamma>0$ and $c_v>0$. Without loss of
generality, we set $c_v=1$.

We consider system \eqref{e1} in the bounded domain $Q_T=\Omega\times (0, T)$ with $\Omega=(0, 1)$
subject to the following initial and boundary conditions:
\begin{equation}\label{e4}
\left\{
\begin{array}{lllllll}
(\rho,u,\theta, \mathbf{w},\mathbf{b})(x,0)=(\rho_0,u_0,\theta_0,\mathbf{w}_0,\mathbf{b}_0)(x),\\[1mm]
(u, \mathbf{b},\theta_x)|_{x=0, 1}=\mathbf{0},\\[1mm]
\mathbf{w}(0,t)=\mathbf{w}^-(t),\quad\mathbf{w}(1,t)=\mathbf{w}^+(t).
\end{array}
\right.
\end{equation}

Because of physical importance, complexity, rich phenomenon, and
mathematical challenges, the MHD problem  has been extensively
studied in many papers, see
\cite{B,CW1,CW2,Wang,CD,DF,HW,HW2,VH,KO,LL,LZ,S0,W} and the references
therein.   In particular, if there is no  magnetic effect, MHD
reduces to the compressible Navier-Stokes equations, see for example
\cite{K,F,S,TW,WX,XY} and references therein for some mathematical
studies. However, many fundamental problems for MHD are still open.
For example,  even though for the one dimensional
 compressible Navier-Stokes equations, there is a pioneer work  by
Kazhikhov and Shelukhin \cite{KS} on the global existence of strong
solutions with large initial data, the corresponding problem for the
MHD system with constant viscosity, heat conductivity and
diffusivity coefficients remains unsolved. The reason is that the
presence of the magnetic field and complex interaction with the
hydrodynamic motion in the MHD flow of large oscillation cause
serious difficulties.

The  initial-boundary value problem  \eqref{e1}-\eqref{e4} has
fundamental importance in the studies on the MHD problem. In this
paper, we investigate the global existence, zero shear viscosity
limit, convergence rate and boundary layer effect
 of strong solutions for problem \eqref{e1}-\eqref{e4} with large initial data, where $\kappa$ may depend both density
and temperature such that $\kappa=\kappa(\rho,\theta)$ is twice
continuous differential in $\mathbb{R}^+\times\mathbb{R}^+$ and
satisfies
\begin{equation}\label{kappa}
\kappa(\rho,\theta)\geq \kappa_1(1+\theta^q)\quad\hbox{\rm with
 constants}~ \kappa_1>0 ~\hbox{\rm and}~ q>0.
\end{equation}
 In kinetic theory of gas, the heat conductivity $\kappa$
is a function of temperature $\theta$ and increases with $\theta$ in
general (cf.\cite{CC,ZYY}). From experimental results for gases at
very high temperatures (see \cite{ZYY}),   the condition
\eqref{kappa} seems reasonable  when one considers a gas model that
incorporates real gas effects that occur in high temperature (cf.\cite{K}). In \cite{K}, one of the assumptions on $\kappa$ is that
there are constants $C_1, C_2>0$ such that the heat conductivity
$\kappa$ satisfies
\begin{equation}\label{assumption30}
\begin{split}
 C_1(1+\theta^q)\leq \kappa(\rho,\theta)\leq C_2(1+\theta^q), \quad\forall \rho, \theta>0,\\
\end{split}
\end{equation}
where $q\geq 2$, which implies that $\kappa$ has a positive lower
bound. This type of temperature dependence is also motivated by the
physical fact that $\kappa$ grows like $\theta^q $ with   $q=2.5$ for important physical regimes and $q \in [4.5,
5.5]$ for molecular diffusion in gas (see \cite{ZYY}).  The
assumption \eqref{assumption30} with $q> 0$ was also made  in many papers (see for example \cite{Wang,CW2,FanJiang,
FHL, JZ} and references  therein). Clearly, here we
remove these assumptions on $\kappa$.

 The well-posedness theory has been studied in many papers, some of which will be mentioned below.
 It was Vol'pert and Hudjaev \cite{VH} who first proved the existence and uniqueness of local smooth solutions.
 The global existence of smooth solutions with small initial data was established by Kawashima and Okada \cite{KO}.
  Under the technical condition on $\kappa$:
 \begin{equation}\label{assumption4}
\begin{split}
 C^{-1}(1+\theta^q)\leq \kappa(\rho,\theta)\equiv\kappa(\theta)\leq C (1+\theta^q),
\end{split}
\end{equation}
 for  $q\geq 2$, Chen and Wang \cite{CW2} proved the existence, uniqueness and the Lipschitz
continuous dependence of global strong solutions with large $H^1$ initial
data. Similar results can be found in \cite{CW1, Wang} under the same technical condition as \eqref{assumption4}.
The existence of global  weak solutions was proved by Fan, Jiang and Nakamura \cite{FanJiang}
under the condition \eqref{assumption4} for $q\geq 1$ or the condition $\kappa\equiv\kappa(\rho)\geq C/\rho$,
 while the uniqueness and the Lipschitz continuous
dependence on the initial data of global weak solutions with the initial data  in  Lebesgue spaces were
obtained by them in \cite{FanJiang11}. Very recently, the case $q>0$ of condition
 \eqref{assumption4} was treated by Fan, Huang and Li \cite{FHL} where the existence and
 uniqueness of global solutions with large initial data and vaccum  were shown. A similar result  can be found in \cite{HJ} by Hu and Ju. The uniqueness and continuous dependence of weak
solutions for the Cauchy problem have been proved recently by Hoff and Tsyganov in
\cite{HT}. In this paper, we   show the global existence of strong solutions for problem \eqref{e1}-\eqref{e4}
under the general condition \eqref{kappa}, which extends some global existence results mentioned above.

 The problem of small viscosity finds many applications, for example, in the boundary layer theory (cf. \cite{Sch}).
 In this direction, some results on the Navier-Stokes equations can be referred to  \cite{S,JZ, FS,FS1,QYYZ,YaoZhangZhu}
  and references   therein. The vanishing shear viscosity limit of the weak solution for problem \eqref{e1}-\eqref{e4}
   was studied by Fan, Jiang and Nakamura  \cite{FanJiang}
under the condition \eqref{assumption4} for $q\geq 1$ or $\kappa\equiv \kappa(\rho)\geq C/\rho$.
As pointed out in  \cite{FHL}, the result of \cite{FanJiang}   can be  transplanted to the case $q>0$ of
the condition \eqref{assumption4}. In this paper, we justify the passage to the limit
with more strong convergence of $\mathbf{w}$ and $\mathbf{b}$ under the general condition \eqref{kappa}.
Thus, we extend and improve some results mentioned above.

 The boundary layer theory has been one of the fundamental
and important issues in fluid dynamics since it was proposed by
Prandtl in 1904. Frid and Shelukhin  in \cite{FS1}  investigated the
boundary layer effect of the compressible isentropic Navier-Stokes
equations  with cylindrical symmetry, and proved the existence of
boundary layers of   thickness   $O(\mu^{\alpha}) (0<\alpha<1/2)$.
Under the assumption on $\kappa$:
\begin{equation}\label{assumption3}
\begin{split}
 C^{-1}(1+\theta^q)\leq \kappa(\rho,\theta)\leq C (1+\theta^q),\quad
 |\kappa_\rho(\rho,\theta)|\leq C (1+\theta^q), ~~q>1, \\
\end{split}
\end{equation}
 Jiang and Zhang  \cite{JZ} studied the compressible  nonisentropic Navier-Stokes equations  with cylindrical symmetry,
 and proved that the thickness of boundary layer  is of the order $O(\mu^\alpha)
 (0<\alpha<1/2)$. Recently, Jiang and Zhang's result is  extended to the case of
 constant heat conductivity, see \cite{QYYZ}. A similar result  can be found in \cite{YaoZhangZhu} by Yao,
Zhang and Zhu.
 To the best of our knowledge, however, there is no corresponding results   for the initial
boundary problem \eqref{e1}--\eqref{e4}.  In this paper, the value
$\mu^\alpha (0<\alpha<1/2)$ is established for the boundary layer
thickness of problem \eqref{e1}-\eqref{e4}.

 We introduce some
notations. Let $k\geq 0$ be an integer, $\mathcal{O}$ a domain of $\mathbb{R}^n (n\geq 1)$ and $p\geq 1$.  $W^{k,p}(\mathcal{O})$ and $W_0^{k,p}(\mathcal{O})$  denote the usual Sobolev spaces, $W^{0,p}(\mathcal{O})=L^p(\mathcal{O})$.
$C^{k}(\mathcal{O})$ and $C^{k}(\overline{\mathcal{O}})$ denote  the spaces consisting of continuous
derivatives up to order $k$ in $\mathcal{O}$. For $0<\alpha<1$, $C^{k+\alpha}(\mathcal{O})$ (resp.  $C^{\alpha}(\overline{\mathcal{O}})$) and  $C^{k+\alpha,k+\alpha/2}(\mathcal{O})$ (resp. $C^{k+\alpha,k+\alpha/2}(\overline{\mathcal{O}})$) denote  the H\"{o}lder spaces with the exponent $\alpha$.  $L^p(I,B)$ is the
space of all strong measurable, $p^{th}$-power
integrable (essentially bounded if $p=\infty$) functions from $I$ to
$B$, where $I\subset \mathbb{R}$ and $B$ is a Banach space. For
simplicity, we also use the notation $\|(f, g,
\cdots)\|^2_{B}=\|f\|^2_{B}+\|g\|_{B}^2+\cdots$ for functions $f,
g,\cdots$ belonging to
  $B$ equipped with a norm $\|\cdot\|_{B}$.

In what follows, we assume that the initial and boundary functions satisfy
 \begin{equation}\label{assumption1}
\begin{split}
&\rho_0>0,\,\,\theta_0>0,\,\,\, \|(\rho_0^{-1},\theta_0^{-1})\|_{C(\overline{\Omega})}<\infty,
\,\,\|(\mathbf{w}^-,\mathbf{w}^+)\|_{C^1([0,T])}<\infty, \\[1mm]
&(\rho_0,\mathbf{w}_0,\theta_0)\in W^{1,2}(\Omega),\,\,\mathbf{b}_0\in W_0^{1,2}(\Omega),
\,\, u_0\in W_0^{1,2}(\Omega)\cap W^{2,m}(\Omega)~\hbox{\rm with}~m\in (1, +\infty),\\[1mm]
&\mathbf{w}_0(0)=\mathbf{w}^-(0),\,\,\mathbf{w}_0(1)=\mathbf{w}^+(0).
\end{split}
\end{equation}
Now the  results on the global existence, vanishing shear viscosity limit and convergence rate
of strong solutions can be stated as
follows.

\begin{theorem}\label{existencethm}
Let \eqref{kappa} and \eqref{assumption1} hold. Then

(i)~~ For any fixed $\mu>0$, there exists a unique strong solution
$(\rho,u,\mathbf{w},\mathbf{b},\theta)$ for problem
\eqref{e1}--\eqref{e4}. Moreover, there exist  some positive
constants $C$ independent of $\mu$ such that
\begin{equation}\label{ve}
\begin{split}
&C^{-1}\leq \rho, \theta \leq C,\quad  \|(u,\mathbf{w},\mathbf{b})\|_{L^\infty(Q_T)}\leq C,\\[2mm]
&\|(\rho_t,\rho_x, u_x,\mathbf{b}_x,\theta_x)\|_{L^\infty(0, T;L^2(\Omega))}
+\|(u_t, \mathbf{b}_t,\theta_t, u_{xx}, \theta_{xx})\|_{L^2(Q_T)}\leq C,\\[2mm]
&\|\mathbf{w}_x\|_{L^\infty(0,
T;L^1(\Omega))}+\|\mathbf{w}_t\|_{L^2(Q_T)}\leq C,\\
&\mu^{1/4}\|\mathbf{w}_x\|_{L^{\infty}(0,T;L^2(\Omega))} +\mu^{3/4}\|\mathbf{w}_{xx}\|_{L^2(Q_T)} \leq C,\\
&\|\sqrt{\omega}\mathbf{w}_x\|_{L^\infty(0,
T;L^2(\Omega))}+\|\big(\sqrt{|u|}\mathbf{w}_x,
\sqrt{\omega}\mathbf{b}_{xx}\big)\|_{L^2(Q_T)} \leq C,\\
\end{split}
\end{equation}
where $\omega: [0, 1]\rightarrow [0, 1]$ is defined by
\begin{equation*}
\begin{aligned}
\omega(x)=\left\{\begin{aligned} &x,&0\leq x\leq  1/2,\\ &1-x,&
1/2\leq x\leq 1.
\end{aligned}\right.
\end{aligned}
\end{equation*}

(ii)~~There exist  some functions $\overline\rho, \overline u,
\overline{\mathbf{w}}, \overline{\mathbf{b}}$ and $\overline\theta$ in the class:
\begin{equation*}\label{base0}
\mathbb{F}:\left\{\begin{split}
&  \overline\rho, \overline\theta>0,\quad(\overline u, \overline{\mathbf{b}})|_{x=0, 1}=0,\\
&(\overline\rho,  1/\overline\rho,\overline u, \overline{\mathbf{w}}, \overline{\mathbf{b}},
\overline\theta,1/\overline\theta)\in L^\infty(Q_T),\quad
  \overline{\mathbf{w}} \in L^\infty(0, T;W^{1,1}(\Omega))\cap BV(Q_T),\\
    &(\overline\rho_t,\overline\rho_x,\overline u_x,\overline{\mathbf{b}}_x,\overline\theta_x)
    \in L^\infty(0, T;L^2(\Omega)),\quad (\overline u_x,\overline{\mathbf{b}}_x,\overline \theta_x) \in L^2(0, T;L^\infty(\Omega)),\\
   &\big(\overline u_t,  \overline{\mathbf{w}}_t, \overline{\mathbf{b}}_t,\overline\theta_t,
   \overline u_{xx}, \overline\theta_{xx}\big) \in L^2(Q_T),\\
       &\sqrt{\omega}\overline{\mathbf{w}}_x\in L^\infty(0, T;L^2(\Omega)),\quad\big(\sqrt{|\overline u|}\overline{\mathbf{w}}_x,\sqrt{\omega}\overline{\mathbf{b}}_{xx} \big) \in  L^2(Q_T),\\
 \end{split}\right.
\end{equation*}
such that, as $\mu\rightarrow 0$,  $(\rho,u,\mathbf{w},\mathbf{b}, \theta)$   converges in the following sense
\begin{equation*}\label{rate}
 \begin{split}
 &(\rho,u,\mathbf{b}, \theta)\rightarrow (\overline\rho,\overline u,\overline{\mathbf{b}},
 \overline\theta)~~ \hbox{\rm strongly in}~~C^\alpha(\overline Q_T),~\forall \alpha\in(0, 1/4),\\
 &(u_x, \theta_x)\rightarrow (\overline u_x, \overline\theta_x)~~\hbox{\rm strongly in}
 ~~L^{s_1}(Q_T),~\forall s_1 \in [1, 6),\\
 & \mathbf{b}_x \rightarrow  \overline{\mathbf{b}}_x ~~\hbox{\rm strongly in}~~L^{s_2}(Q_T),~\forall s_2 \in [1, 4),\\
    &  (\rho_t,\rho_x) \rightharpoonup (\overline\rho_t, \overline\rho_x)
    ~~\hbox{\rm weakly}-*~\hbox{\rm in}~ L^\infty(0, T; L^2(\Omega)),\\
  &(u_t,\mathbf{b}_t,\theta_t,u_{xx}, \theta_{xx})
  \rightharpoonup(\overline u_t,\overline{\mathbf{b}}_t, \overline\theta_t,\overline u_{xx},
  \overline\theta_{xx})~~ \hbox{\rm weakly in}~~L^2(Q_T),\\
  & \mathbf{b}_{xx}  \rightharpoonup  \overline{\mathbf{b}}_{xx}
  \quad \hbox{\rm  weakly in}~~L^2\big((a+\delta,b-\delta)\times(0, T)\big),~\forall \delta\in\big(0, (b-a)/2\big),\\
      \end{split}
  \end{equation*}
 and
  \begin{equation*}
   \begin{split}
 & \mathbf{w}  \rightarrow   \overline{\mathbf{w}}\quad\hbox{\rm strongly in}~~C^\alpha([a+\delta,b-\delta]\times[0, T]),~\forall \delta\in\big(0, (b-a)/2\big),~\alpha\in(0, 1/4)\\
 &  \mathbf{w}_t   \rightharpoonup   \overline{\mathbf{w}}_t ~~\hbox{\rm weakly in}~~L^2(Q_T),\\
  &   \mathbf{w}_x \rightharpoonup  \overline{\mathbf{w}}_x\quad\hbox{\rm weakly}-*~\hbox{\rm in}~ L^\infty(0, T; L^2(a+\delta,b-\delta)),~\forall \delta\in\big(0, (b-a)/2\big),\\
 &\mathbf{w}\rightarrow  \overline{\mathbf{w}}~~ \hbox{\rm strongly in}~~L^r(Q_T),\quad\forall r \in [1, +\infty),\\
 &\sqrt{\mu}\|\mathbf{w}_x\|_{L^4(Q_T)} \rightarrow 0.
   \end{split}
    \end{equation*}
  Moreover, $(\overline{\rho},\overline{u},\overline{\mathbf{w}},\overline{\mathbf{b}},\overline{\theta})$
  solves problem  \eqref{e1}--\eqref{e4} with $\mu=0$ in the sense:
  \begin{equation}\label{equations}
 \begin{split}&\left.\begin{split}
&\overline\rho_t+(\overline\rho ~\overline u)_x=0, \\[1mm]
&(\bar\rho\bar u)_t+\big(\bar\rho\bar u^2+\gamma\overline\rho\overline\theta+|\overline{\mathbf{b}}|^2/2\big)_x
=\lambda \overline u_{xx},\\[1mm]
&(\bar\rho\overline{\mathbf{w}})_t+ (\bar\rho\bar u\overline{\mathbf{w}}-\overline{\mathbf{b}})_x=0,\\[1mm]
& \overline{\mathbf{b}}_t+ ( \bar u\overline{\mathbf{b}}-\overline{\mathbf{w}})_x=\nu \overline{\mathbf{b}}_{xx},\\[1mm]
&   (\overline\rho \overline\theta)_t+(\bar\rho \bar u\overline\theta)_x +\gamma\overline\rho
\overline\theta \overline u_x
-\big[\kappa(\overline\rho,\overline\theta)\overline\theta_x\big]_x
= \lambda \overline u_x^2+\nu|\overline{\mathbf{b}}_x|^2,
\end{split}\right\}~\hbox{\rm a.e. in }~Q_T,\\[1mm]
 &\iint_{Q_T} \Big\{\big[(\overline\rho \overline\theta)_t+(\bar\rho\bar u\overline\theta)_x
+\gamma\overline\rho \overline\theta \overline u_x-\lambda \overline u_x^2-\nu|\overline{\mathbf{b}}_x|^2\big]\varphi
+\kappa(\overline\rho,\overline\theta)\overline\theta_x\varphi_x\Big\}dxdt
= 0,
\end{split}\end{equation}
for all  $\varphi \in L^2(0, T;W^{1,2}(\Omega))$.

(iii)~~Assume that
$(\overline{\rho},\overline{u},\overline{\mathbf{w}},\overline{\mathbf{b}},\overline{\theta})
\in \mathbb{F}$ is a solution for the limit problem
\eqref{equations}. Then
\begin{equation*}\label{0u3}
\begin{aligned}
&\|(\rho-\overline\rho, u-\overline u, \mathbf{w}-\overline{\mathbf{w}},\mathbf{b}-\overline{\mathbf{b}},
\theta-\overline\theta )\|_{L^\infty(0, T;L^2(\Omega))}\\
&\quad\quad\quad+\|(u_x-\overline u_x,  \mathbf{b}_x-\overline{\mathbf{b}}_x, \theta_x-\overline\theta_x)\|_{L^2(Q_T)}=
 O(\mu^{1/4}).
\end{aligned}
\end{equation*}

\end{theorem}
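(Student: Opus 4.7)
The plan is to establish the three parts sequentially, with part (i) providing the quantitative backbone for (ii) and (iii). For the global existence in (i), I would first construct a local strong solution via standard linearization plus fixed-point arguments, and then extend it globally by uniform a priori estimates. The basic energy law, obtained by testing the momentum, transverse momentum, magnetic and energy equations against $u$, $\mathbf{w}-\widetilde{\mathbf{w}}$ (with $\widetilde{\mathbf{w}}(x,t)=(1-x)\mathbf{w}^-(t)+x\mathbf{w}^+(t)$ lifting the boundary data), $\mathbf{b}$, and $1$ respectively, yields $L^\infty_tL^2_x$ control of $\sqrt{\rho}\,u$, $\sqrt{\rho}\,\mathbf{w}$, $\mathbf{b}$, $\rho\theta$, together with the dissipation bounds for $u_x$, $\sqrt{\mu}\,\mathbf{w}_x$ and $\mathbf{b}_x$. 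A Kazhikhov--Shelukhin representation of $\ln\rho$ via the effective viscous flux $\lambda u_x-\gamma\rho\theta-|\mathbf{b}|^2/2$ then gives pointwise upper and lower bounds for $\rho$. The most delicate point is to control $\theta$ under only the lower bound \eqref{kappa}: I would combine the entropy-type identity (which provides a $\theta^q$-dissipation and an upper bound on $\int\theta\,dx$) with a De Giorgi / Moser iteration applied to the quasilinear heat equation satisfied by $\theta$ to bootstrap a uniform $L^\infty$ bound, thereby removing the upper-bound assumption \eqref{assumption30} of earlier works.

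The crucial new $\mathbf{w}$-estimates, uniform in $\mu$, are then obtained in two stages. Testing the differentiated transverse-momentum equation against $\mathrm{sgn}(\mathbf{w}_x)$ yields the BV bound $\|\mathbf{w}_x\|_{L^\infty_tL^1_x}\leq C$; multiplying the same equation by $\omega(x)\mathbf{w}_{xx}$ produces the weighted bounds $\|\sqrt{\omega}\,\mathbf{w}_x\|_{L^\infty_tL^2_x}+\|\sqrt{|u|}\,\mathbf{w}_x\|_{L^2(Q_T)}+\|\sqrt{\omega}\,\mathbf{b}_{xx}\|_{L^2(Q_T)}\leq C$, the boundary terms being absorbed thanks to the vanishing of $\omega$ at $x=0,1$. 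Interpolating the $L^\infty_tL^1_x$ bound on $\mathbf{w}_x$ with the $L^\infty_tL^2_x$ bound on $\sqrt{\mu}\,\mathbf{w}_x$ yields the scaled estimates $\mu^{1/4}\|\mathbf{w}_x\|_{L^\infty_tL^2_x}+\mu^{3/4}\|\mathbf{w}_{xx}\|_{L^2(Q_T)}\leq C$. Higher regularity for $(\rho_x,u_x,\mathbf{b}_x,\theta_x)$ and $(u_t,\mathbf{b}_t,\theta_t,u_{xx},\theta_{xx})$ then follows by differentiating the system and standard energy methods, closing (i).

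For part (ii), the uniform bounds of (i) together with Aubin--Lions compactness allow extraction of a subsequence $\mu\to 0$ along which $(\rho,u,\mathbf{b},\theta)$ converges in $C^\alpha(\overline Q_T)$ and the weak-$*$ and weak convergences listed in the theorem hold; strong convergence of $\mathbf{w}$ on each interior rectangle $[\delta,1-\delta]\times[0,T]$ is provided by the weighted $H^1$ bound. Passage to the limit in the nonlinear products is routine once the strong convergences are in place; the shear-dissipation term $\mu|\mathbf{w}_x|^2$ in the energy equation tends to zero in $L^1(Q_T)$ because $\sqrt{\mu}\|\mathbf{w}_x\|_{L^4(Q_T)}\to 0$, which follows by interpolating $\sqrt{\mu}\,\mathbf{w}_x\in L^\infty_tL^2_x$ (in fact of size $\mu^{1/4}$) against the $L^\infty_tL^1_x$ bound. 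The full-domain strong convergence $\mathbf{w}\to\overline{\mathbf{w}}$ in every $L^r(Q_T)$ then follows from uniform $L^\infty$ boundedness plus the smallness of a boundary strip of width $2\delta$.

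For part (iii), I would set $(R,U,W,B,\Theta)=(\rho-\overline\rho,\,u-\overline u,\,\mathbf{w}-\overline{\mathbf{w}},\,\mathbf{b}-\overline{\mathbf{b}},\,\theta-\overline\theta)$, subtract \eqref{equations} from \eqref{e1}, and run a standard relative-energy estimate by testing the difference equations against the natural multipliers. All non-$\mu$ error terms are quadratic in $(R,U,W,B,\Theta)$ with $L^\infty\cap L^2$ coefficients coming from the uniform bounds of (i); the only genuinely $\mu$-dependent contributions are $\mu\mathbf{w}_{xx}$ (in the transverse-momentum equation) and $\mu|\mathbf{w}_x|^2$ (in the energy equation), which by the estimates of (i) satisfy $\|\mu\mathbf{w}_{xx}\|_{L^2(Q_T)}\leq C\mu^{1/4}$ and $\mu\|\mathbf{w}_x\|_{L^2(Q_T)}^2\leq C\mu^{1/2}$. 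A Gronwall argument then produces the claimed rate $O(\mu^{1/4})$. The main obstacle throughout is step (i): handling $\theta$ under the weakened assumption \eqref{kappa} and simultaneously obtaining $\mu$-uniform control of $\mathbf{w}$ in the presence of boundary layers; the weight $\omega(x)$ and the BV-in-$x$ bound on $\mathbf{w}_x$ are the key devices for overcoming this.
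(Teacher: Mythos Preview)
Your outline captures several correct ingredients (energy law, Kazhikhov--Shelukhin representation for $\rho$, BV bound on $\mathbf{w}_x$ via a sign-type test, weighted $\omega$-estimates, and the compactness/relative-energy arguments for (ii)--(iii)), but the order of operations in part (i) differs from the paper in ways that matter, and two steps contain genuine gaps.

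First, you omit what the paper singles out as its key device: an \emph{a priori} $L^{m_0}(Q_T)$ bound on $u_{xx}$ with $m_0=\min\{m,4/3\}>1$, obtained directly from linear parabolic $L^p$-theory applied to the $u$-equation (this is where the assumption $u_0\in W^{2,m}$ enters). This yields $\int_0^T\|u_x\|_{L^\infty}\,dt\le C$ \emph{before} any estimate on $\mathbf{w}_x$ or $\mathbf{b}_t$ is available, and is what lets the Gronwall arguments in the $\omega^2$-weighted estimates for $\mathbf{w}_x,\mathbf{b}_{xx}$ close. Those weighted bounds are then expressed in terms of $\|u_{xx}\|_{L^2(Q_t)}$, and only after feeding them back into the $u$- and $\theta$-equations does one close $\|u_{xx}\|_{L^2(Q_T)}\le C$. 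The BV bound on $\mathbf{w}_x$ comes \emph{after} this, not before: its proof requires $\mathbf{b}_t\in L^2(Q_T)$ and control of the boundary flux $\mu\mathbf{w}_{xx}$ at $x=0,1$, both of which rely on the $L^2$ bound for $u_{xx}$ already being in hand. Your proposed order (BV first, then weighted, then ``standard'' higher regularity) would leave these terms uncontrolled.

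Second, two specific claims are problematic. Your interpolation argument for $\mu^{1/4}\|\mathbf{w}_x\|_{L^\infty_tL^2_x}\le C$ does not follow from $\|\mathbf{w}_x\|_{L^1_x}\le C$ and $\sqrt{\mu}\,\|\mathbf{w}_x\|_{L^2_x}\le C$ alone; the paper instead \emph{re-runs} the $\mathbf{w}_x$ energy estimate using the now-available bounds $\|\mathbf{b}_x\|_{L^2_tL^\infty_x}\le C$ and $\|\mathbf{w}_x\|_{L^\infty_tL^1_x}\le C$, so that the right-hand side improves from $C$ to $C\sqrt{\mu}$. And for the upper bound on $\theta$, the paper does \emph{not} use De~Giorgi--Moser: once $u_x^4,\,\mu^2|\mathbf{w}_x|^4,\,|\mathbf{b}_x|^4\in L^1(Q_T)$ are known, it differentiates the temperature equation in $x$, tests against $\varphi_\epsilon'(\theta_x)$ with $\varphi_\epsilon(s)=\sqrt{s^2+\epsilon^2}$, and obtains $\|\theta_x\|_{L^\infty_tL^1_x}\le C$, whence $\theta\le C$. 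A Moser iteration under only the lower bound \eqref{kappa} would need stronger integrability of $\mathcal{Q}$ than is available at the stage you propose, so it is unclear it can be made uniform in $\mu$.

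Your treatment of (ii) and (iii) is essentially the paper's; in (iii) note that the cross term $\int\rho\,\widetilde u\,\overline{\mathbf{w}}_x\cdot\widetilde{\mathbf{w}}$ is absorbed via $|\widetilde u(x,t)|^2\le\omega(x)\int_\Omega\widetilde u_x^2\,dx$ together with the weighted bound $\|\sqrt{\omega}\,\overline{\mathbf{w}}_x\|_{L^\infty_tL^2_x}\le C$, which is the one place where the weight $\omega$ is indispensable for the rate.
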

\begin{remark}
With the estimates appearing in   the above
theorem, and following the argument given in \cite{KS}(cf. \cite{CW1}), if the initial data is in H\"{o}lder space,
 i.e.,
\begin{equation*}
\rho_0\in C^{1+\alpha}(\Omega),\,\,\quad (u_0,\mathbf{w}_0,\mathbf{b}_0,\theta_0)\in C^{2+\alpha}(\Omega)
\end{equation*}
for some $\alpha\in (0,1)$, then there exists a unique classical solution
\begin{equation*}
\rho\in C^{1+\alpha,1+\alpha/2}(Q_T),\,\,\quad (u,\mathbf{w},\mathbf{b},\theta)\in C^{2+\alpha,1+\alpha/2}(Q_T),
\end{equation*}
and it satisfies \eqref{ve}.
\end{remark}

\begin{remark}
It should be pointed out that if we only consider the global existence with  fixed $\mu$, then
 the condition $u_0\in W^{2,m} (m>1)$ in \eqref{assumption1} can be removed. In fact, this can be done
 in a more easy way, but we do not pursue it in the paper.
\end{remark}

 Compared to \cite{FanJiang,FHL} and some related references, the generality of the condition \eqref{kappa} causes
 some other technical difficulties  since all the
estimates in \eqref{ve} must be uniform in $\mu$. Firstly,  we must
overcome the difficulty coming from the dissipative estimate on the
temperature. For example, Fan-Jiang-Nakamura \cite{FanJiang} only
 established the $\mu$-uniform estimate of $\theta_x$ in $L^\beta(Q_T)$ with any $ \beta\in(1, 3/2)$
 by means of the technique used by Frid and Shelukhin \cite{FS}.   Secondly,  to
obtain the stronger convergence  of $\mathbf{w}$ and $\mathbf{b}$
(see Theorem 1.1(ii)), we must establish some new  uniform estimates
on the derivatives of $\mathbf{w}$  and $\mathbf{b}$. Thirdly, we
must seek a new method to obtain a uniform upper bound of the
temperature.

To overcome the difficulties, some  techniques are developed
here. One of two  ingredients in the proof is the boundary estimates
of derivatives of the transverse velocity and the magnetic field,
and the other is that we deduce a  uniform upper bound of $\theta$ by
a simple, direct method.

Below we present a sketch of the proof to \eqref{ve}. Firstly, the
uniform upper and lower bounds of the density can be obtained in a
standard way. Next, a key observation is that
we can establish the uniform bound of
$\|u_{xx}\|_{L^{m_0}(Q_T)} (m_0>1)$ by $L^p$-theory of
linear parabolic equations (see Lemma \ref{2.5}), which plays an
important role in this paper. It should be pointed out that it is in
this step we ask the condition $u_0\in W^{2,m}(\Omega)$ for some
$m>1$. By virtue of the estimate and a delicate analysis, we then
deduce the difficult bounds of $\|\omega\mathbf{w}_x\|_{L^\infty(0,
T;L^2(\Omega))}$ and
 $\|(u_t,\mathbf{b}_t,\mathbf{w}_t,
u_{xx},\theta_{x},\omega\mathbf{b}_{xx})\|_{L^{2}(Q_T)}$(see Lemma \ref{2.10}).
In this step, the main idea is to use the norm $\|u_{xx}\|_{L^{2}(Q_T)}$ to control the qualities
 $\|(\omega\mathbf{w}_x, \mathbf{b}_x)\|_{L^\infty(0, T;L^2(\Omega))}$ and
 $\|\mathbf{w}_t\|_{L^2(Q_T)}$ (see Lemmas \ref{2.7}-\ref{2.9}) and then,
 from the equations of $u$ and $\theta$ it follows the uniform bound of $\|u_{xx}\|_{L^{2}(Q_T)}$
 by Gronwall's inequality.
 With the  uniform bound  of $\|\mathbf{b}_t\|_{L^{2}(Q_T)}$,
 we deduce the uniform bounds of $\|\mathbf{w}_x\|_{L^\infty(0, T;L^1(\Omega))}$
 and $\|\mathbf{b}_x\|_{L^2(0, T;L^\infty(\Omega)}$ (see Lemmas \ref{2.11} and \ref{2.12}),
 by which  we   further obtain  the uniform bounds of $\|\sqrt{\omega}\mathbf{w}_x\|_{L^\infty(0, T;L^2(\Omega))}$
 and $\big(\mu^{1/4}\|\mathbf{w}_x\|_{L^\infty(0,T;L^2(\Omega))}+\mu^{3/4}\|\mathbf{w}_{xx}\|_{L^2(Q_T)}\big)$
 (see Lemma \ref{2.13}), which are essential to study both  $L^2$  convergence rate and boundary layer thickness.
 Due to the above   estimates, we finally get an upper bound of
$\theta$ in a direct way (see Lemma \ref{2.14}). As a
consequence, the uniform bound of $\|(\theta_t,
\theta_{xx})\|_{L^{2}(Q_T)}$ can be obtained by a brief argument
(see Lemma \ref{2.15}). Consequently, the passage to limit is
justified in the more strong sense.

Next, we  investigate the thickness of boundary layer.  At first, we
  give the definition of a BL-thickness  defined as in
\cite{FS1} (cf. \cite{JZ}).
\begin{definition}\label{defination}
A function $\delta(\mu)$ is called a BL-thickness for problem
\eqref{e1}-\eqref{e4} with vanishing  $\mu$ if
$\delta(\mu)\downarrow 0$ as $ \mu \downarrow 0$, and
\begin{equation*}\label{12}
\begin{aligned}
&\lim\limits_{\mu\rightarrow 0}\|(\rho-\overline\rho, u-\overline u,
\mathbf{w}-\overline{\mathbf{w}}, \mathbf{b}-\overline{\mathbf{b}},
\theta-\overline\theta)\|_{L^\infty(0,T;L^\infty(\Omega_{\delta(\mu)}))}=0,\\
&\mathop{ \inf\lim}\limits_{\mu\rightarrow 0}\|(\rho-\overline\rho,
u-\overline u, \mathbf{w}-\overline{\mathbf{w}},
\mathbf{b}-\overline{\mathbf{b}},\theta-\overline\theta)\|_{L^\infty(0,T;L^\infty(\Omega))}>0,
\end{aligned}
\end{equation*}
where $\Omega_\delta=(\delta, 1-\delta)$ for $\delta \in (0, 1/2)$,
and $(\rho, u, \mathbf{w}, \mathbf{b}, \theta)$ and
$(\overline\rho,\overline u, \overline{\mathbf{w}},
\overline{\mathbf{b}}, \overline\theta)$ are the solutions to
problem \eqref{e1}-\eqref{e4}  and problem \eqref{e1}-\eqref{e4}
with $\mu=0$, respectively.
\end{definition}

 We shall prove that for any $\alpha \in (0, 1/2)$, the function
 $\delta(\mu)=\mu^{\alpha}$
is a BL-thickness, which is almost optimal since it is close to the
classical value $O(\sqrt{\mu})$ (see e.g. \cite{Sch}). One can see from
the proof in Section 3 that our method is a bit different from that
used in \cite{JZ,FS1}, which is based on an iteration inequality \eqref{iteration}. To indicate the
 idea clearly, we further assume that
\begin{equation}\label{vw}
\begin{aligned}
\mathbf{w}_0=\mathbf{b}_0\equiv\mathbf{0}.
\end{aligned}
\end{equation}

\begin{theorem}\label{am} Let  \eqref{kappa},
\eqref{assumption1} and \eqref{vw} hold. Assume that $(\mathbf{w}^-, \mathbf{w}^+)$ is not identically equal to
$\mathbf{0}$. Then the limit problem \eqref{equations} has a unique solution $(\overline \rho,\overline u, \mathbf{0},
 \mathbf{0}, \overline\theta)$ in $\mathbb{F}$, and the  function $\delta(\mu)=\mu^\alpha$
  for any $\alpha\in(0, 1/2)$
is  a BL-thickness for problem \eqref{e1}-\eqref{e4} such that
\begin{equation*}
\begin{split}
   &\lim\limits_{\mu\rightarrow 0} \|(\rho-\overline\rho,u-\overline u,\mathbf{b},\theta-\overline\theta)\|_{C^\alpha(\overline Q_T)}=0,\quad \forall \alpha\in (0,1/4),\\
   &\lim\limits_{\mu\rightarrow 0} \|\mathbf{w}\|_{L^\infty(0, T;L^\infty(\delta(\mu),
   1-\delta(\mu)))}=0,\quad \mathop{\inf\lim}\limits_{\mu\rightarrow 0} \|\mathbf{w}\|_{L^\infty(0, T;L^\infty(\Omega))}>0.\\
  \end{split}
\end{equation*}
Moreover, $\mathbf{w}$ has the asymptotic property: 
\begin{equation*}
\begin{split}
      \|\mathbf{w}_x\|^2_{L^\infty(0, T;L^2(\delta, 1-\delta))}\leq \left\{
              \begin{split}
           & C_n\big(\tau+\tau^3+\cdots+\tau^{n-2}\big)+C_n\mu^{(n-1)/2}/\delta^n~(n=\hbox{\rm odd}),\\
       & C_n\big(\tau+\tau^3+\cdots+\tau^{n-1}\big)+C_n\mu^{(n-1)/2}/\delta^n~(n=\hbox{\rm even}),\\
         \end{split}\right.
  \end{split}
\end{equation*}
where  $\delta\in (0, 1/2), \tau=\sqrt{\mu}/\delta$, and the constants $C_n$ are independent of $\mu$ and $\delta$.
\end{theorem}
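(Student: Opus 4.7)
The plan has three main strands: (I) uniqueness of the limit problem, which identifies $\bar{\mathbf{w}}\equiv\bar{\mathbf{b}}\equiv\mathbf{0}$ and upgrades the subsequential convergences of Theorem~\ref{existencethm}(ii) to full ones; (II) the boundary trace argument giving $\liminf_{\mu\to 0}\|\mathbf{w}\|_{L^\infty(0,T;L^\infty(\Omega))}>0$; and (III) a cutoff--energy iteration producing the interior decay of $\mathbf{w}_x$, and hence of $\mathbf{w}$. Strand (III) is where all the new work lies.

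For (I), under the hypothesis $\mathbf{w}_0=\mathbf{b}_0=\mathbf{0}$, I test the limit $\bar{\mathbf{w}}$--equation of \eqref{equations} by $\bar{\mathbf{w}}$ and the $\bar{\mathbf{b}}$--equation by $\bar{\mathbf{b}}$, using $\bar u|_{x=0,1}=0$ and $\bar{\mathbf{b}}|_{x=0,1}=0$ in the integrations by parts. The two cross terms combine as $\int_\Omega(\bar{\mathbf{w}}\cdot\bar{\mathbf{b}})_x\,dx=0$, leaving
$$
\frac{d}{dt}\!\int_\Omega\!\bigl(\tfrac12\bar\rho|\bar{\mathbf{w}}|^2+\tfrac12|\bar{\mathbf{b}}|^2\bigr)dx+\nu\!\int_\Omega\!|\bar{\mathbf{b}}_x|^2\,dx=-\tfrac12\!\int_\Omega\!\bar u_x|\bar{\mathbf{b}}|^2\,dx.
$$
Since $\bar u_x\in L^2(0,T;L^\infty(\Omega))$ in the class $\mathbb{F}$, Gr\"onwall with vanishing initial data forces $\bar{\mathbf{w}}\equiv\bar{\mathbf{b}}\equiv\mathbf{0}$. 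The remaining equations reduce to the 1D compressible Navier--Stokes system in $(\bar\rho,\bar u,\bar\theta)$, uniquely solvable in $\mathbb{F}$ by classical arguments. The $C^\alpha(\overline{Q_T})$ convergences follow immediately from Theorem~\ref{existencethm}(ii) (now along the full family $\mu\to 0$). For (II), the trace $\mathbf{w}(0,t)=\mathbf{w}^-(t)$, $\mathbf{w}(1,t)=\mathbf{w}^+(t)$ is prescribed and not identically zero, so at some $t_0$ one has $\|\mathbf{w}(\cdot,t_0)\|_{L^\infty(\Omega)}\ge\max(|\mathbf{w}^-(t_0)|,|\mathbf{w}^+(t_0)|)>0$ uniformly in $\mu$.

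For (III), fix $\delta\in(0,1/2)$ and a smooth cutoff $\phi\in C^\infty_c(\delta/2,1-\delta/2)$ with $\phi\equiv 1$ on $[\delta,1-\delta]$ and $|\phi'|\le C/\delta$. Writing the $\mathbf{w}$--equation as $\rho\mathbf{w}_t+\rho u\mathbf{w}_x=\mu\mathbf{w}_{xx}+\mathbf{b}_x$, I test against $\phi^2\mathbf{w}_{xx}$ and integrate over $Q_T$. Using the $\mu$--uniform bounds of Theorem~\ref{existencethm}(i) (notably $\|\rho\|_{L^\infty},\|u\|_{L^\infty}\le C$, $\|\mathbf{b}_x\|_{L^\infty(0,T;L^2)}\le C$, and $\|\mathbf{w}_t\|_{L^2(Q_T)}\le C$) and absorbing the commutator terms involving $\phi'$ into the principal dissipation $\mu\int\phi^2|\mathbf{w}_{xx}|^2$ at the price of a factor $\sqrt{\mu}/\delta=\tau$, I arrive at the iteration inequality~\eqref{iteration} relating $A(\delta):=\|\mathbf{w}_x\|_{L^\infty(0,T;L^2(\delta,1-\delta))}^2$ to its value on a slightly larger interval. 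The base estimate $A(\delta)\le C/\delta$ (the $n=1$ row in the statement) is exactly $\|\omega^{1/2}\mathbf{w}_x\|_{L^\infty(L^2)}\le C$ from Theorem~\ref{existencethm}(i). Iterating $n-1$ times produces the finite geometric series $\tau+\tau^3+\cdots$ with residue $\mu^{(n-1)/2}/\delta^n$, the odd/even parity reflecting whether the last step of the iteration is even or odd.

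To deduce the BL--thickness statement, take $\delta=\mu^\alpha$ with $\alpha\in(0,1/2)$: then $\tau=\mu^{1/2-\alpha}\to 0$, and for any such $\alpha$ one picks $n$ large enough that $(n-1)/2-n\alpha>0$, so that the residue $\mu^{(n-1)/2}/\delta^n\to 0$ as well. Hence $A(\mu^\alpha)\to 0$, and the 1D Sobolev embedding on $(\mu^\alpha,1-\mu^\alpha)$, combined with the global $L^r(Q_T)$ strong convergence $\mathbf{w}\to\mathbf{0}$ supplied by Theorem~\ref{existencethm}(ii), upgrades this to $\|\mathbf{w}\|_{L^\infty(0,T;L^\infty(\mu^\alpha,1-\mu^\alpha))}\to 0$. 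Together with (II) this shows $\delta(\mu)=\mu^\alpha$ is a BL--thickness in the sense of Definition~\ref{defination}. The principal obstacle is the book--keeping in (III): each iteration step must contract by the exact power of $\tau$ while the residual scales as $\mu^{(n-1)/2}/\delta^n$; in particular, at every step the convective term $\rho u\mathbf{w}_x$ and the magnetic source $\mathbf{b}_x$ must be controlled by the sharper $L^\infty(0,T;L^2)$--bounds of Theorem~\ref{existencethm}(i) rather than by $L^\infty$--bounds, which are not available $\mu$--uniformly.
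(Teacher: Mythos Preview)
Your strands (I) and (II) match the paper and are correct. The gap is in strand (III), and it is a real one.

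You propose to drive the iteration using only the $O(1)$ bounds from Theorem~\ref{existencethm}(i), in particular $\|\mathbf b_x\|_{L^\infty(0,T;L^2)}\le C$ and $\|\mathbf w_t\|_{L^2(Q_T)}\le C$. But with only $O(1)$ source terms, the iteration cannot give any decay as $\mu\to 0$: each step produces a forcing of size $O(1)$ (after dividing out the weight), so at best you recover $A(\delta)\le C$, i.e.\ the base estimate. The paper does not proceed this way. Before setting up the iteration it first proves (Lemmas~\ref{3.1}--\ref{3.4}) the sharpened bounds
\[
\sup_t\!\int_\Omega\!\bigl(|\mathbf w|^2+|\mathbf b|^2+|\mathbf b_x|^2\bigr)dx
+\iint_{Q_T}\!\bigl(|\mathbf b_x|^2+|\mathbf b_t|^2+|\mathbf w_t|^2\bigr)dxdt\le C\sqrt{\mu},
\]
which follow from Theorem~\ref{existencethm}(iii) (giving the $\sqrt{\mu}$ rate for $\mathbf w,\mathbf b,\mathbf b_x$ in $L^2$) together with a bootstrap through Lemmas~\ref{2.7}--\ref{2.9}. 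It is exactly this $\sqrt{\mu}$ that becomes the factor $\sqrt{\mu}\,\delta^{n-1}$ in the iteration \eqref{iteration}; without it the constant term would be $\delta^{n-1}$, and after dividing by $\delta^n$ you get $1/\delta$, not $\tau$. So your iteration as stated cannot produce the asymptotic formula, nor even the weaker conclusion $A(\mu^\alpha)\to 0$.

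There is also a structural difference worth flagging. You iterate in the \emph{domain} (a dyadic cutoff supported in $(\delta/2,1-\delta/2)$, equal to $1$ on $(\delta,1-\delta)$), whereas the paper keeps $\delta$ fixed and iterates in the \emph{power} $n$ of a single weight $\omega_\delta$, defined to equal $x$ on $[0,\delta]$, $\delta$ on $[\delta,1-\delta]$, and $1-x$ on $[1-\delta,1]$. The weight--power scheme is what makes the bookkeeping close: the commutator $\mu\mathbf w_{xx}\cdot\mathbf w_x\,\omega_\delta^{n-1}\omega_\delta'$ splits by Young into $\tfrac{\mu}{2}|\mathbf w_{xx}|^2\omega_\delta^n$ (absorbed) plus $C\mu|\mathbf w_x|^2\omega_\delta^{n-2}$ (exactly two powers down, with a clean factor $\mu$), while $|u|\,\omega_\delta^{n-1}|\omega_\delta'|\le\|u_x\|_{L^\infty}\omega_\delta^n$ on $\operatorname{supp}\omega_\delta'$ returns the same power $n$ and is handled by Gronwall. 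In your dyadic scheme the effective $\tau$ doubles at each step (since $\delta$ halves), so the product of contraction factors picks up a factor like $2^{n^2}$, and the convective commutator does not land back on $\int\phi^2|\mathbf w_x|^2$ but on the larger $\int\phi|\mathbf w_x|^2$; both make closing the iteration delicate. If you want to salvage the domain--based approach you would have to redesign the cutoffs so that the recursion is at fixed $\delta$, which is effectively what the paper's $\omega_\delta^n$ does.
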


The remainder of this paper shall be arranged as follows. In Section
2, we will prove Theorem \ref{existencethm}. For this, a lot of a
priori estimates independent of $\mu$ are derived in Section 2.1,
which are sufficient to prove  this theorem.  The second and third
parts of this theorem can be shown in Sections 2.2 and 2.3,
respectively. Finally, we will give the proof of Theorem \ref{am} in
Section 3.
\section{The proof of Theorem 1.1}

The existence and uniqueness of local solutions can be obtained by
using the Banach theorem and the contractivity of the operator
defined by the linearization of the problem on a small time interval
(cf.\cite{Wang,Nash}). The existence of global solutions is proved
by extending the local solutions globally in time based on the
global a priori estimates of solutions. The uniqueness of the global
solution follows from the uniqueness of the local solution. Thus,
the next subsection will focus on deriving required a priori
estimates of the  solution  $(\rho,u,\mathbf{w}, \mathbf{b},\theta)$. Moreover, all a
priori estimates which will be established are uniform in $\mu$.

Throughout this section, we shall denote by $C$ the various positive
constants dependent on $T$, but independent of $\mu$.

\subsection{A priori estimates  independent of $\mu$}

Rewrite  \eqref{e1} as
\begin{equation}\label{e20}
\begin{split}
&\mathcal{E}_t+\Big[u\big(\mathcal{E}+p+\frac12|\mathbf{b}|^2\big)-\mathbf{w}\cdot\mathbf{b}\Big]_x=\big(\lambda uu_x+\mu\mathbf{w}\cdot\mathbf{w}_x+\nu\mathbf{b}\cdot\mathbf{b}_x+\kappa\theta_x\big)_x,\\[2mm]
&(\rho \mathcal{S})_t+(\rho u \mathcal{S})_x-\left(\frac{\kappa\theta_x}{\theta}\right)_x=\frac{\lambda u_x^2+\mu|\mathbf{w}_x|^2+\nu|\mathbf{b}_x|^2}{\theta}
+\frac{\kappa\theta_x^2}{\theta^2},
\end{split}
\end{equation}
where $\mathcal{E}$ and $\mathcal{S}$ are the total energy and the
entropy, respectively,
\begin{equation*}
\begin{split}
&\mathcal{E}=\rho\left[\theta+\frac12(u^2+|\mathbf{w}|^2)\right]+\frac12|\mathbf{b}|^2,\quad\mathcal{S}
=\ln\theta-\gamma\ln\rho.
\end{split}
\end{equation*}

\begin{lemma}\label{2.1}
\label{energy}Let \eqref{kappa} and \eqref{assumption1} hold. Then
\begin{equation}\label{ba1}
\begin{split}
&\int_\Omega \rho(x,t)dx=\int_\Omega \rho_0(x)dx,\quad\forall t \in (0, T),\\
&\sup\limits_{0<t<T}\int_\Omega\big[\rho(\theta+ u^2+|\mathbf{w}|^2)+ |\mathbf{b}|^2\big] dx\leq
C,\\[1mm]
& \iint_{Q_T}
\left(\frac{\lambda u_x^2+\mu|\mathbf{w}_x|^2+\nu|\mathbf{b}_x|^2}{\theta}+\frac{\kappa\theta_x^2}{\theta^2}\right)dxdt
\leq C.
\end{split}
\end{equation}
\end{lemma}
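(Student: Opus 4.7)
The plan is to derive the three bounds in order using the Kazhikhov--Shelukhin energy/entropy scheme, with one adaptation to handle the nonhomogeneous boundary data on $\mathbf{w}$. Mass conservation \eqref{ba1}$_1$ is immediate: integrating \eqref{e1}$_1$ over $\Omega$ and invoking $u|_{x=0,1}=0$ kills the flux $(\rho u)_x$, so $\tfrac{d}{dt}\int_\Omega\rho\,dx=0$.

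For the total energy bound \eqref{ba1}$_2$, I would integrate the conservative form \eqref{e20}$_1$ over $\Omega$. The conditions $u=\mathbf{b}=\theta_x=0$ on $\partial\Omega$ annihilate every surface contribution except $\mu[\mathbf{w}\cdot\mathbf{w}_x]_{x=0}^{x=1}$, which cannot be estimated directly since $\mathbf{w}^\pm$ is nontrivial and we have no pointwise bound on $\mathbf{w}_x$ at the endpoints. The key device is the linear lift
\[
\hat{\mathbf{w}}(x,t):=(1-x)\mathbf{w}^-(t)+x\mathbf{w}^+(t),
\]
which by \eqref{assumption1} is bounded in $C^1(\overline{Q_T})$ and matches $\mathbf{w}$ on $\partial\Omega$. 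Testing \eqref{e1}$_3$ against $\hat{\mathbf{w}}$ and integrating by parts produces exactly the boundary flux $\mu[\mathbf{w}_x\cdot\hat{\mathbf{w}}]_{x=0}^{x=1}$, so subtracting this identity from the integrated energy identity makes the surface term cancel (since $\mathbf{w}=\hat{\mathbf{w}}$ on $\partial\Omega$) and leaves a closed equation for $\mathcal{F}(t):=\int_\Omega(\mathcal{E}-\rho\mathbf{w}\cdot\hat{\mathbf{w}})\,dx$. The identity
\[
\mathcal{F}(t)=\int_\Omega\Big[\rho\theta+\tfrac12\rho u^2+\tfrac12\rho|\mathbf{w}-\hat{\mathbf{w}}|^2+\tfrac12|\mathbf{b}|^2\Big]\,dx-\tfrac12\int_\Omega\rho|\hat{\mathbf{w}}|^2\,dx
\]
together with the mass bound \eqref{ba1}$_1$ shows that $\mathcal{F}(t)$ is equivalent, up to an additive constant, to the quantity in \eqref{ba1}$_2$. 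The interior source terms $\int\rho\mathbf{w}\cdot\hat{\mathbf{w}}_t$, $\int\rho u\mathbf{w}\cdot\hat{\mathbf{w}}_x$, $\int\mathbf{b}\cdot\hat{\mathbf{w}}_x$, and $\mu\int\mathbf{w}_x\cdot\hat{\mathbf{w}}_x$ are all controlled by $C(\mathcal{F}(t)+1)$ using Cauchy--Schwarz and $\|\hat{\mathbf{w}}\|_{C^1}\leq C$; the $\mu$-term is absorbed by an $\varepsilon$-Young splitting against the entropy dissipation below. Gronwall then gives $\mathcal{F}(t)\leq C$ uniformly in $\mu$.

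For the dissipation bound \eqref{ba1}$_3$, I would integrate \eqref{e20}$_2$ over $Q_T$. The surface contributions $\rho u\mathcal{S}$ and $\kappa\theta_x/\theta$ vanish on $\partial\Omega\times(0,T)$ by \eqref{e4}, so the dissipation integral equals $\int_\Omega\rho\mathcal{S}(x,T)\,dx-\int_\Omega\rho_0\mathcal{S}_0\,dx$, and it suffices to bound $\int_\Omega\rho\mathcal{S}\,dx$ from above. Writing $\mathcal{S}=\ln\theta-\gamma\ln\rho$ and using the elementary inequalities $\rho\ln\theta\leq\rho\theta+\rho$ together with $-\rho\ln\rho\leq e^{-1}$ (from the maximum of $-s\ln s$ on $(0,\infty)$), the desired upper bound follows from \eqref{ba1}$_1$--\eqref{ba1}$_2$ alone. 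The only real obstacle is the $\mathbf{w}$-boundary flux in the energy identity; the lift $\hat{\mathbf{w}}$ disposes of it, after which the energy and entropy steps can be coupled through a single Gronwall argument to yield \eqref{ba1}$_2$ and \eqref{ba1}$_3$ simultaneously and uniformly in $\mu$.
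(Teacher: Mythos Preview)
Your strategy is correct and gives a genuinely different proof from the paper's. The paper handles the boundary flux $\mu[\mathbf{w}\cdot\mathbf{w}_x]_{x=0}^{x=1}$ by deriving an integral representation for $\mu\mathbf{w}_x(a,t)$: integrating \eqref{e1}$_3$ from $a$ to $x$ and then over $\Omega$ expresses $\mu\mathbf{w}_x(a,t)$ in terms of $\int_\Omega(\rho u\mathbf{w}-\mathbf{b})$ and a time derivative; multiplying by $\mathbf{w}(a,t)$, integrating in time, and integrating by parts in $t$ yields a bound by $\int_\Omega\mathcal{E}+\iint\mathcal{E}$, after which Gronwall closes. Your lift $\hat{\mathbf{w}}$ is a cleaner alternative that converts the boundary flux into interior source terms in one stroke, and the entropy step you describe then matches the paper's.

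One correction, however: your proposed treatment of the term $\mu\int_\Omega\mathbf{w}_x\cdot\hat{\mathbf{w}}_x\,dx$ via an $\varepsilon$--Young splitting against the entropy dissipation does not quite work as stated. The entropy identity only controls $\mu\iint|\mathbf{w}_x|^2/\theta$, and passing from this to $\mu\iint|\mathbf{w}_x|^2$ (or, in the weighted form, bounding $\mu\iint\theta|\hat{\mathbf{w}}_x|^2$) would require either $\theta\leq C$ or $\int_\Omega\theta\leq C\int_\Omega\rho\theta$, i.e.\ a lower bound on $\rho$---neither of which is available before Lemma~\ref{2.2}. Fortunately this is a non-issue: since $\hat{\mathbf{w}}_x=\mathbf{w}^+(t)-\mathbf{w}^-(t)$ is \emph{constant in $x$}, you have exactly
\[
\mu\int_\Omega\mathbf{w}_x\cdot\hat{\mathbf{w}}_x\,dx=\mu\big(\mathbf{w}^+(t)-\mathbf{w}^-(t)\big)\cdot\big(\mathbf{w}(1,t)-\mathbf{w}(0,t)\big)=\mu\,|\mathbf{w}^+(t)-\mathbf{w}^-(t)|^2\leq C\mu,
\]
so the term is bounded outright and no coupling with the entropy is needed. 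With this observation, a direct Gronwall on $\mathcal{F}$ alone gives \eqref{ba1}$_2$, and then the entropy identity yields \eqref{ba1}$_3$ exactly as you outlined.
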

\begin{proof}
Integrating $\eqref{e20}_1$ over $Q_t=\Omega\times(0, t)$ with $t\in (0, T)$ yields
\begin{equation}\label{total}
\begin{split}
&\int_\Omega \mathcal{E}dx=\int_\Omega \mathcal{E}|_{t=0}dx+\mu\int_0^t\mathbf{w}\cdot\mathbf{w}_x|_{x=0}^{x=1}ds.
\end{split}
\end{equation}

Let $a=0$ or $1$. We first integrate  \eqref{e1}$_3$ from $x=a$ to $x$, and then integrate the resulting equation over $\Omega$, to obtain
\begin{equation*}\label{w8}
\begin{split}
\mu\mathbf{w}_x(a,t)=\mu\big(\mathbf{w}^+- \mathbf{w}^-\big)-\int_\Omega(\rho u\mathbf{w}-\mathbf{b})dx-\frac{\partial}{\partial t}\int_\Omega\int_a^x\rho\mathbf{w}dydx.
\end{split}
\end{equation*}
Multiplying it by  $\mathbf{w}(a,t)$ and integrating  over $(0, t)$, we have
\begin{equation*}
\begin{split}
\mu\int_0^t(\mathbf{w}\cdot\mathbf{w}_x)(a,s)ds=&\mu\int_0^t\big(\mathbf{w}^+- \mathbf{w}^-\big)\cdot\mathbf{w}(a,s)ds -\int_0^t\mathbf{w}(a,s)\cdot\left(\int_\Omega(\rho u\mathbf{w}-\mathbf{b})dx\right)ds\\[1mm]
&-\mathbf{w}(a,t)\cdot\left(\int_\Omega\int_a^x
\rho\mathbf{w}dydx\right)+\mathbf{w}(a,0)\cdot\left(\int_\Omega\int_a^x
\rho_0\mathbf{w}_0dydx\right)\\[1mm]
&+\int_0^t\mathbf{w}_t(a,t)\cdot\left(\int_\Omega\int_a^x\rho\mathbf{w}dydx\right)dt,
\end{split}
\end{equation*}
hence,   by Young's inequality and \eqref{ba1}$_1$,
\begin{equation*}\label{w10}
\begin{split}
\left|\mu\int_0^t(\mathbf{w}\cdot\mathbf{w}_x)(a,s)ds\right|\leq&C+C\int_\Omega\rho|\mathbf{w}|dx+C \iint_{Q_t}\big(\rho |u| |\mathbf{w}|+|\mathbf{b}|+\rho  |\mathbf{w}|\big)dxds\\
\leq&C+\frac12\int_\Omega\mathcal{E}dx+C\iint_{Q_t}\mathcal{E}dxds.
\end{split}
\end{equation*}
Substituting it into \eqref{total} yields
\begin{equation*}
\begin{split}
&\int_\Omega \mathcal{E}dx\leq C+C\iint_{Q_t}\mathcal{E}dxds,
\end{split}
\end{equation*}
and so,  \eqref{ba1}$_2$ follows from Gronwall's inequality.

\eqref{ba1}$_3$ can be proved by integrating \eqref{e20}$_2$ and using \eqref{ba1}$_2$. The proof is complete.
\end{proof}

From Lemma 2.1, the following   estimates can be proved.

\begin{lemma}\label{2.2}
Let \eqref{kappa} and \eqref{assumption1} hold. Then
\begin{equation}\label{rho11}
\begin{split}
&  C^{-1}\leq \rho \leq C, \\
& \theta\geq C,\\
 &\iint_{Q_T} \frac{\kappa\theta_x^2}{\theta^{1+\alpha}} dxdt
\leq C,\quad\forall \alpha \in (0, \min\{1,q\}),\\
 &\int_0^T\|\theta\|_{L^\infty(\Omega)}^{q+1-\alpha}dt
\leq C,\quad\forall \alpha \in (0, \min\{1,q\}),\\
&\iint_{Q_T} \left(\lambda
u_x^2+\mu|\mathbf{w}_x|^2+\nu|\mathbf{b}_x|^2 \right)dxdt
\leq C,\\
&\int_0^T\|\mathbf{b}\|_{L^\infty(\Omega)}^{2}dt
\leq C,\\
&\iint_{Q_T} |\theta_x|^{3/2} dxdt\leq C.
\end{split}
\end{equation}
\end{lemma}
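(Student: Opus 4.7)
The plan is to establish the seven estimates in Lemma~\ref{2.2} in the order stated, with each one feeding the next: the density bounds come first by a Lagrangian representation; the lower bound on $\theta$ comes from the entropy inequality combined with a maximum-principle argument on a negative power of $\theta$; the weighted $\theta_x^2$ estimate comes from a multiplier argument on the temperature equation; the time-integrable $L^\infty$ bound on $\theta$ comes from a Sobolev-type inequality; the unweighted $L^2$ dissipation bounds for $(u,\mathbf w,\mathbf b)$ come from the standard kinetic-energy identity closed by this $L^\infty$ bound; and finally $\theta_x\in L^{3/2}$ comes by H\"older.

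For the density bounds I would use the Kazhikhov--Shelukhin representation in the mass coordinate $y=\int_0^x\rho(\xi,t)\,d\xi$. Integrating the momentum equation in $y$ and exploiting the mass equation produces an ODE (in $t$, at each fixed $y$) of the form $\lambda(\ln v)_t=-\gamma\rho\theta-\tfrac12|\mathbf b|^2+G(y,t)$ for $v=1/\rho$; the magnetic pressure $|\mathbf b|^2/2$ enters additively just like the hydrodynamic pressure and causes no new difficulty because $\iint|\mathbf b|^2\,dxdt\le C$ by Lemma~\ref{2.1}. Solving this ODE and using $\iint\rho\theta\,dxdt\le C$ yields $C^{-1}\le\rho\le C$. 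For the pointwise lower bound on $\theta$, monotonicity of $\int\rho\mathcal S\,dx$ from \eqref{e20}$_2$ combined with the density bounds gives $\int\ln\theta\,dx\ge-C$; this is promoted to a pointwise lower bound by applying an energy estimate to $\theta^{-\sigma}$ for small $\sigma>0$, the bad term $\sigma\gamma\int\rho u_x\theta^{-\sigma}\,dx$ being absorbed by Cauchy--Schwarz against $\iint u_x^2/\theta\,dxdt\le C$ from Lemma~\ref{2.1}.

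For the weighted estimate $\iint\kappa\theta_x^2/\theta^{1+\alpha}\le C$, I would multiply the internal energy equation $\rho\theta_t+\rho u\theta_x+\gamma\rho\theta u_x-(\kappa\theta_x)_x=\mathcal Q$ by $\theta^{-\alpha}$ with $\alpha\in(0,\min\{1,q\})$ and integrate over $Q_t$. The diffusion term integrates by parts to $\alpha\iint\kappa\theta_x^2/\theta^{1+\alpha}$ because $\theta_x|_{x=0,1}=0$; the pressure-work term $\gamma\iint\rho\theta^{1-\alpha}u_x$ is split by Cauchy--Schwarz against $\iint u_x^2/\theta$; the $\mathcal Q\theta^{-\alpha}$ contribution is dominated by the entropy dissipation. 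To pass to the time-integrable $L^\infty$ bound on $\theta$ I would use
\[
\|\theta\|_{L^\infty(\Omega)}^{q+1-\alpha}\le(\min\theta)^{q+1-\alpha}+(q+1-\alpha)\int_\Omega\theta^{q-\alpha}|\theta_x|\,dx,
\]
apply Cauchy--Schwarz on the integral together with $\kappa\ge\kappa_1\theta^q$, and absorb the $\|\theta\|_{L^\infty}^{(q+1-\alpha)/2}$ factor that appears on the right, obtaining a pointwise-in-$t$ inequality $\|\theta\|_{L^\infty}^{q+1-\alpha}\le C+C\int\kappa\theta_x^2/\theta^{1+\alpha}\,dx$; integrating in $t$ and using the previous step yields $\int_0^T\|\theta\|_{L^\infty}^{q+1-\alpha}\,dt\le C$.

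The unweighted dissipation then follows from the standard kinetic-energy identity: multiplying the $u$-, $\mathbf w$-, and $\mathbf b$-equations by $u$, a correction of $\mathbf w$ vanishing at the boundary, and $\mathbf b$ respectively, and summing. The pressure work $\gamma\iint\rho\theta u_x$ is bounded by $\varepsilon\iint u_x^2+C\int_0^T\|\theta\|_{L^\infty}\,dt$, while the magnetic coupling terms are absorbed by Young's inequality; the time-integrability of $\|\theta\|_{L^\infty}$ from the previous step closes the bound. The $L^2$-in-time $L^\infty$-in-space bound on $\mathbf b$ follows from Sobolev embedding $W^{1,2}(\Omega)\hookrightarrow L^\infty(\Omega)$ and Poincar\'e applied with $\mathbf b|_{x=0,1}=0$ to the $\mathbf b_x\in L^2(Q_T)$ bound. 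Finally, $\theta_x\in L^{3/2}(Q_T)$ comes by writing $|\theta_x|^{3/2}=(\kappa\theta_x^2/\theta^{1+\alpha})^{3/4}(\theta^{1+\alpha}/\kappa)^{3/4}$ and applying H\"older, using $\kappa\ge\kappa_1(1+\theta^q)$ and the time-integrability of $\|\theta\|_{L^\infty}$. The main obstacle I anticipate is the pointwise lower bound on $\theta$ under the general assumption \eqref{kappa}: with no upper bound on $\kappa$, the maximum-principle argument on $\theta^{-\sigma}$ must be executed carefully so that $\sigma$ can be chosen uniformly and the absorption against $\iint u_x^2/\theta$ is legitimate.
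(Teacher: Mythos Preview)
Your overall architecture is sound and matches the paper's at the level of logical dependencies. The density bounds via the Kazhikhov--Shelukhin representation, the multiplier $\theta^{-\alpha}$ for \eqref{rho11}$_3$, the Sobolev estimate for \eqref{rho11}$_4$, the kinetic-energy closure for \eqref{rho11}$_5$, Poincar\'e for \eqref{rho11}$_6$, and H\"older for \eqref{rho11}$_7$ are all standard and essentially what the paper either cites from \cite{FHL,FanJiang} or writes out.

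The one point where your route genuinely diverges from the paper --- and where you yourself flag the main obstacle --- is the pointwise lower bound $\theta\ge C$. Your plan is to combine the entropy bound $\int\ln\theta\,dx\ge -C$ with an energy estimate on $\theta^{-\sigma}$. As stated there is a gap: the energy identity for $\theta^{-\sigma}$ yields only $\sup_t\int\rho\theta^{-\sigma}\,dx\le C$ together with a \emph{time-integrated} dissipation $\iint\kappa\theta_x^2/\theta^{\sigma+2}\le C$, and neither of these by itself produces a pointwise lower bound on $\theta$ at every $(x,t)$. The passage from integral control of $\theta^{-\sigma}$ to $L^\infty$ control of $\theta^{-1}$ requires further work (an iteration in $\sigma$, or a Sobolev argument at a.e.\ time where the spatial dissipation may be large), which you do not specify. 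This is precisely the difficulty you anticipate.

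The paper sidesteps it entirely by a direct parabolic comparison, using only $\rho\le C$. Completing the square in the dissipation term of \eqref{e1}$_5$ gives
\[
\theta_t+u\theta_x-\frac{1}{\rho}(\kappa\theta_x)_x+K\theta^2\ge 0
\]
for a constant $K>0$ independent of $\mu$. The explicit function $\underline\theta(t)=\dfrac{\min_{\overline\Omega}\theta_0}{1+K(\min_{\overline\Omega}\theta_0)\,t}$ solves $\underline\theta'+K\underline\theta^2=0$ and is a subsolution with matching Neumann data, so the comparison theorem yields $\theta\ge\underline\theta$ on $\overline Q_T$. No structural information about $\kappa$ beyond nonnegativity enters; in particular the absence of an upper bound on $\kappa$ is irrelevant. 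This is shorter and more robust than the $\theta^{-\sigma}$ route and dissolves the obstacle you correctly identified.
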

\begin{proof}
The proofs to the estimates $\rho \leq C$ and
\eqref{rho11}$_3$-\eqref{rho11}$_5$ can be found in \cite{FHL} where
the vacuum is permitted. \eqref{rho11}$_6$ is an immediate
consequence of   \eqref{rho11}$_5$, so  the estimate  $\rho\geq
C^{-1}$ can be proved in a standard way (see \cite{FanJiang}). We
omit their proofs for brevity.

Now we turn to \eqref{rho11}$_2$, whose proof depends  only  on the estimate $\rho \leq C$.  It follows from  \eqref{e1}$_5$
 that
\begin{equation*}
 \begin{split}
\theta_t+u\theta_x-\frac{1}{\rho} (\kappa \theta_x)_x
\geq & \frac{\lambda}{\rho} \left(u_x^2 -\frac{p}{\lambda} u_x \right)
=   \frac{\lambda}{\rho} \left(u_x
-\frac{p}{2\lambda}\right)^2-\frac{\gamma^2}{4\lambda}\rho\theta^2.
\end{split}
\end{equation*}
By $ \rho \leq C$, we have
 \begin{equation*}\begin{split}
\theta_t+u\theta_x-\frac{1}{\rho} (\kappa
\theta_x)_x   +K \theta^2\geq 0,\\
\end{split}\end{equation*}
where $K$ is a positive constant independent of $\mu$. Let $z=\theta-\underline\theta$, where
$\underline\theta=\frac{\min_{\overline\Omega}\theta_0}{Ct+1}$
 with $C=K\min_{\overline\Omega}\theta_0$. Then
 $ z_x|_{x=0, 1}=0, ~z|_{t=0}\geq0, $ and
 \begin{equation*}\begin{split}
&z_t+u z_x-\frac{1}{\rho} (\kappa z_x)_x  +K(\theta+\underline\theta)z\\
&=\theta_t+C\frac{\min_{\overline\Omega}\theta_0}{(Ct+1)^2}+u\theta_x-\frac{1}{\rho} (\kappa
\theta_x)_x  +K \theta^2- K\left(\frac{\min_{\overline\Omega}\theta_0}{Ct+1}\right)^2\\
&\geq C\frac{\min_{\overline\Omega}\theta_0}{(Ct+1)^2} -K\left(\frac{\min_{\overline\Omega}\theta_0}{Ct+1}\right)^2 = 0,
\end{split}\end{equation*}
and then,  $z\geq 0$ on
$\overline Q_T$ by the comparison theorem, so  \eqref{rho11}$_2$.

It remains to show \eqref{rho11}$_7$. By  \eqref{rho11}$_2$ and  \eqref{rho11}$_3$, we  have
\begin{equation}\label{theta01}
\begin{split}
\iint_{Q_T}\frac{\theta_x^2}{\theta} dxdt\leq C.
\end{split}
\end{equation}
Then, we have by the mean value theorem, Lemma \ref{2.1}, \eqref{rho11}$_1$ and H\"{o}lder's inequality
  \begin{equation*}
\begin{split}
\theta \leq & \int_\Omega \theta dx
+ \int_\Omega |\theta_x|dx\\
\leq &C+C\left(\int_\Omega\frac{\theta_x^2}{\theta}dx\right)^{1/2}\left(\int_\Omega
\theta  dx\right)^{1/2}\\
 \leq & C+C\left(\int_\Omega\frac{\theta_x^2}{\theta}dx\right)^{1/2},
\end{split}
\end{equation*}
which together with \eqref{theta01} gives
\begin{equation}\label{theta00}
\begin{split}
 \int_0^T\|\theta\|_{L^\infty(\Omega)}^2dt \leq C.\\
\end{split}
\end{equation}
Thus, it follows from H\"{o}lder's inequality, Lemma 2.1 and \eqref{theta00}  that
\begin{equation*}
\begin{split}
\iint_{Q_T}|\theta_x|^{3/2}dxdt\leq&\left(\iint_{Q_T}\frac{\theta_x^2}{\theta} dxdt\right)^{3/4}
\left(\iint_{Q_T}\theta^3 dxdt\right)^{1/4}\\
\leq& C\left(\int_0^T\|\theta^{2}\|_{L^\infty(\Omega)}\int_\Omega \theta dxdt\right)^{1/4}\leq C.\\
\end{split}
\end{equation*}
The proof is  complete.
\end{proof}

About the magnetic field $\mathbf{b}$, we have

\begin{lemma}\label{2.3}Let \eqref{kappa} and \eqref{assumption1} hold. Then
\begin{equation*}\label{b00}
\begin{split}
& \sup\limits_{0<t<T}\int_\Omega|\mathbf{b}|^4dx+\iint_{Q_T}|\mathbf{b}|^2|\mathbf{b}_x|^2dxdt\leq C.
\end{split}
\end{equation*}
\end{lemma}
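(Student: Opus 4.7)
The plan is a standard energy estimate in $L^4$ for the magnetic field equation $\eqref{e1}_4$, balanced by the dissipative term, with auxiliary bounds coming from the estimates already established in Lemmas \ref{2.1} and \ref{2.2}. First I would take the scalar product of $\eqref{e1}_4$ with $|\mathbf{b}|^2\mathbf{b}$ and integrate over $\Omega$. The time derivative gives $\tfrac14\tfrac{d}{dt}\int_\Omega |\mathbf{b}|^4\,dx$; the convective term $(u\mathbf{b})_x\cdot|\mathbf{b}|^2\mathbf{b}$, after integrating the $u(|\mathbf{b}|^4/4)_x$ piece by parts and using $u|_{x=0,1}=0$, collapses to $\tfrac34\int u_x|\mathbf{b}|^4\,dx$. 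The $-\mathbf{w}_x$ term, integrated by parts, produces $\int\mathbf{w}\cdot(|\mathbf{b}|^2\mathbf{b})_x\,dx$ with no boundary contribution, since $\mathbf{b}$ vanishes at $x=0,1$. The Laplacian term yields the good dissipation $\nu\int_\Omega\bigl(2(\mathbf{b}\cdot\mathbf{b}_x)^2+|\mathbf{b}|^2|\mathbf{b}_x|^2\bigr)dx$, which dominates $\nu\int|\mathbf{b}|^2|\mathbf{b}_x|^2\,dx$.

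Next I would estimate the two "bad" terms. The $\mathbf{w}$-term is easy: $|\int\mathbf{w}\cdot(|\mathbf{b}|^2\mathbf{b})_x\,dx|\leq 3\int|\mathbf{w}||\mathbf{b}|^2|\mathbf{b}_x|\,dx$, which by Young splits as $\leq \tfrac{\nu}{2}\int|\mathbf{b}|^2|\mathbf{b}_x|^2\,dx+C\int|\mathbf{w}|^2|\mathbf{b}|^2\,dx$, and the latter is controlled by $C\|\mathbf{b}\|_{L^\infty}^2\|\mathbf{w}\|_{L^2}^2\leq C\|\mathbf{b}\|_{L^\infty}^2$ since $\|\mathbf{w}\|_{L^2}^2\leq C$ by Lemma \ref{2.1} and $\rho\geq C^{-1}$. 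Here I would use the elementary interpolation
\begin{equation*}
\|\mathbf{b}\|_{L^\infty(\Omega)}^2\leq 2\|\mathbf{b}\|_{L^2(\Omega)}\|\mathbf{b}_x\|_{L^2(\Omega)}\leq C\|\mathbf{b}_x\|_{L^2(\Omega)},
\end{equation*}
obtained from $\mathbf{b}(0,t)=\mathbf{0}$, together with \eqref{rho11}$_5$ and \eqref{rho11}$_6$, which place $\|\mathbf{b}\|_{L^\infty}^2$ and $\|\mathbf{b}_x\|_{L^2}^2$ in $L^1(0,T)$.

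The main obstacle is the $u_x$-term $\tfrac34\int u_x|\mathbf{b}|^4\,dx$, because neither $\|u_x\|_{L^\infty}$ nor $\|\mathbf{b}\|_{L^\infty}^4$ is uniformly controlled in time. My plan is to split $|\mathbf{b}|^4=|\mathbf{b}|^2\cdot|\mathbf{b}|^2$ and apply Cauchy--Schwarz,
\begin{equation*}
\Bigl|\int u_x|\mathbf{b}|^4\,dx\Bigr|\leq \Bigl(\int u_x^2|\mathbf{b}|^2\,dx\Bigr)^{1/2}\Bigl(\int|\mathbf{b}|^6\,dx\Bigr)^{1/2}\leq C\|\mathbf{b}_x\|_{L^2}\|u_x\|_{L^2}\|\mathbf{b}\|_{L^4}^2,
\end{equation*}
after bounding $\int|\mathbf{b}|^6\leq \|\mathbf{b}\|_{L^\infty}^2\|\mathbf{b}\|_{L^4}^4$ and $\int u_x^2|\mathbf{b}|^2\leq \|\mathbf{b}\|_{L^\infty}^2\|u_x\|_{L^2}^2$ and then using the interpolation above. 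By Young this is $\leq C\|u_x\|_{L^2}^2+C\|\mathbf{b}_x\|_{L^2}^2\|\mathbf{b}\|_{L^4}^4$.

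Putting the pieces together produces a differential inequality of the form
\begin{equation*}
\frac{d}{dt}\int_\Omega |\mathbf{b}|^4\,dx+\nu\int_\Omega |\mathbf{b}|^2|\mathbf{b}_x|^2\,dx\leq C\bigl(\|u_x\|_{L^2}^2+\|\mathbf{b}\|_{L^\infty}^2\bigr)+C\bigl(\|\mathbf{b}_x\|_{L^2}^2+\|\mathbf{b}\|_{L^\infty}^2\bigr)\int_\Omega |\mathbf{b}|^4\,dx.
\end{equation*}
Since all the coefficient functions in parentheses belong to $L^1(0,T)$ uniformly in $\mu$ by Lemmas \ref{2.1} and \ref{2.2}, and $\|\mathbf{b}_0\|_{L^4}^4$ is finite by \eqref{assumption1}, Gronwall's inequality yields the uniform $L^\infty_t L^4_x$ bound for $\mathbf{b}$, and then integrating the inequality in time gives the dissipative bound $\iint_{Q_T}|\mathbf{b}|^2|\mathbf{b}_x|^2\,dxdt\leq C$, completing the proof.
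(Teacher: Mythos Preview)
Your proof is correct and follows essentially the same approach as the paper: multiply $\eqref{e1}_4$ by $|\mathbf{b}|^2\mathbf{b}$, integrate, handle the $\mathbf{w}$-term exactly as you do, and close by Gronwall using the $L^1(0,T)$ bounds from Lemmas \ref{2.1}--\ref{2.2}. The only difference is cosmetic: for the convective term the paper keeps it in the form $3\iint u(\mathbf{b}\cdot\mathbf{b}_x)|\mathbf{b}|^2\,dxds$, absorbs the $|\mathbf{b}|^2|\mathbf{b}_x|^2$ part directly into the dissipation via Young, and is left with $C\int_0^t\|u\|_{L^\infty}^2\int_\Omega|\mathbf{b}|^4\,dx\,ds$ for Gronwall (using $\|u\|_{L^\infty}^2\leq C\|u_x\|_{L^2}^2\in L^1(0,T)$), which avoids your extra Cauchy--Schwarz/interpolation step.
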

\begin{proof}Multiplying \eqref{e1}$_4$ by $4|\mathbf{b}|^2\mathbf{b}$ and integrating over $Q_t$, we obtain
\begin{equation}\label{b0}
\begin{split}
&\int_\Omega|\mathbf{b}|^4dx+4\nu\iint_{Q_t}|\mathbf{b}|^2|\mathbf{b}_x|^2dxds+8\nu\iint_{Q_t}|\mathbf{b}\cdot \mathbf{b}_x|^2dxds\\
&=\int_\Omega|\mathbf{b}_0|^4dx+4\iint_{Q_t} \mathbf{w}_x \cdot(|\mathbf{b}|^2\mathbf{b})dxds-4\iint_{Q_t} (u \mathbf{b})_x\cdot(|\mathbf{b}|^2\mathbf{b})dxds.\\
\end{split}
\end{equation}

Integrating by parts and using Young's inequality, we have
\begin{equation}\label{b1}
\begin{split}
  \iint_{Q_t} \mathbf{w}_x \cdot(\mathbf{b} |\mathbf{b}|^2) dxds
 &=-\iint_{Q_t} \mathbf{w} \cdot(\mathbf{b}_x |\mathbf{b}|^2)dxds-2\iint_{Q_t} (\mathbf{w} \cdot\mathbf{b} )(\mathbf{b}\cdot \mathbf{b}_x)dxds\\
 &\leq 3\iint_{Q_t} |\mathbf{w}| |\mathbf{b}|^2 |\mathbf{b}_x| dxds\\
 &\leq \frac{\nu}{4}\iint_{Q_t} |\mathbf{b}|^2 |\mathbf{b}_x|^2dxds+C\iint_{Q_t} |\mathbf{w}|^2 |\mathbf{b}|^2 dxds\\
 &\leq \frac{\nu}{4}\iint_{Q_t} |\mathbf{b}|^2 |\mathbf{b}_x|^2dxds+C\int_0^t\|\mathbf{b}\|_{L^\infty(\Omega)}^2 \int_\Omega |\mathbf{w}|^2dxds\\
 &\leq\frac{\nu}{4}\iint_{Q_t} |\mathbf{b}|^2 |\mathbf{b}_x|^2dxds+C,
\end{split}
\end{equation}
where we used \eqref{ba1}$_2$ and \eqref{rho11}$_6$.

On the other hand, we have
\begin{equation}\label{b2}
\begin{split}
 &-\iint_{Q_t} (u \mathbf{b})_x\cdot|\mathbf{b}|^2\mathbf{b}dxds=3\iint_{Q_t}  u  (\mathbf{b}_x \cdot\mathbf{b})|\mathbf{b}|^2 dxds\\
 &\leq  \frac{\nu}{4}\iint_{Q_t} |\mathbf{b}|^2  |\mathbf{b}_x|^2 dxds+C\iint_{Q_t}   u^2 |\mathbf{b}|^4 dxds\\
 &\leq\frac{\nu}{4}\iint_{Q_t} |\mathbf{b}|^2  |\mathbf{b}_x|^2 dxds+C\int_0^t\|u^2\|_{L^\infty(\Omega)}\int_\Omega |\mathbf{b}|^4 dxds.
\end{split}
\end{equation}
Plugging \eqref{b1} and \eqref{b2} into \eqref{b0} and using
Gronwall's inequality, we finish the proof by noticing
$\int_0^T\|u^2\|_{L^\infty(\Omega)} dt\leq C\iint_{Q_T}u_x^2dxdt\leq
C$.
\end{proof}

\begin{lemma}\label{2.4}Let \eqref{kappa} and \eqref{assumption1} hold. Then\label{densitydevirative}
\begin{equation}\label{rho}
\begin{split}
&\sup\limits_{0<t<T}\int_\Omega\rho_x^2dx+\iint_{Q_T} \big(\rho_t^2+\theta
\rho_x^2\big) dxdt\leq C,\\[1mm]
&\left|\rho(x,t)-\rho(y,s)\right|\leq C\left(|x-y|^{1/2}
+|s-t|^{1/4}\right),\quad\forall (x, t), (y, s) \in \overline Q_T.
\end{split}
\end{equation}
\end{lemma}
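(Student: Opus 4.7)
The plan is to control $\rho_x$ via an effective viscous flux trick that eliminates $u_{xx}$ from the evolution of $\rho_x$, so that the only information about $u$ needed is the $L^2(Q_T)$ bound on $u_x$ from Lemma \ref{2.2}. Setting $F := \lambda\rho_x/\rho + \rho u$, I would derive the PDE satisfied by $F$ as follows: differentiating the continuity equation $(\log\rho)_t + u(\log\rho)_x + u_x = 0$ in $x$ gives $\lambda u_{xx} = -D_t(\lambda(\log\rho)_x) - \lambda u_x(\log\rho)_x$, where $D_t := \partial_t + u\partial_x$; substituting into the momentum equation $\rho D_t u + \gamma(\rho\theta)_x + \mathbf{b}\cdot\mathbf{b}_x = \lambda u_{xx}$ and using $\rho D_t u = D_t(\rho u) + \rho u u_x$ (since $D_t\rho = -\rho u_x$) yields $D_t F + u_x F = -\gamma\rho_x\theta - \gamma\rho\theta_x - \mathbf{b}\cdot\mathbf{b}_x$. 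Writing $\rho_x = \rho(F-\rho u)/\lambda$ extracts a linear dissipative term from the pressure gradient, producing
\[
D_t F + u_x F + \frac{\gamma\rho\theta}{\lambda}F \;=\; \frac{\gamma\rho^2 u\theta}{\lambda} - \gamma\rho\theta_x - \mathbf{b}\cdot\mathbf{b}_x.
\]

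Next I would multiply by $F$ and integrate over $\Omega$; the convective boundary contribution vanishes because $u|_{x=0,1}=0$. This gives the identity
\[
\frac{d}{dt}\int\frac{F^2}{2}\,dx + \frac{1}{2}\int u_x F^2\, dx + \gamma\int\frac{\rho\theta}{\lambda}F^2\, dx = \int F\Big[\frac{\gamma\rho^2 u\theta}{\lambda} - \gamma\rho\theta_x - \mathbf{b}\cdot\mathbf{b}_x\Big]dx.
\]
The dissipation $\int\rho\theta F^2/\lambda\,dx$ combined with $\rho\geq C^{-1}$ from Lemma \ref{2.2} will, via the definition of $F$, ultimately control $\iint\theta\rho_x^2\,dxdt$. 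On the right-hand side, the $\rho\theta_x$ term is absorbed by Cauchy--Schwarz against the entropy-type bound $\iint\theta_x^2/\theta\,dxdt\leq C$ from Lemma \ref{2.2}; the $\mathbf{b}\cdot\mathbf{b}_x$ term is handled by Cauchy--Schwarz with the bound $\iint|\mathbf{b}|^2|\mathbf{b}_x|^2\,dxdt\leq C$ from Lemma \ref{2.3}; and the $\rho^2 u\theta$ contribution is straightforward from the energy bound in Lemma \ref{2.1}.

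The main obstacle is the transport term $\tfrac{1}{2}\int u_x F^2\,dx$: at this stage $\|u_x\|_{L^\infty}$ is not yet controlled, so this term cannot be absorbed into a Gronwall coefficient naively. I would handle it by bounding $|\int u_x F^2|\leq \|u_x\|_{L^2}\|F\|_{L^4}^2$ together with the $1$-d Gagliardo--Nirenberg interpolation $\|F\|_{L^4}^2\leq C\|F\|_{L^2}\|F\|_{L^\infty}$, and controlling $\|F\|_{L^\infty}$ pointwise from the transport-reaction ODE for $F$ along characteristics (whose Jacobian is controlled by the already-bounded $\rho$); an equivalent alternative is to switch to Lagrangian mass coordinates $y=\int_0^x\rho\,ds$, in which the convective derivative becomes $\partial_t$ and the offending term disappears. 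Either way, Gronwall's inequality yields $\sup_t\int F^2\,dx + \iint\theta F^2\,dxdt\leq C$. The bound $\sup_t\int\rho_x^2\,dx\leq C$ follows from the identity $\rho_x = \rho(F-\rho u)/\lambda$ together with the energy bound on $\int\rho u^2$, and the bound on $\iint\rho_t^2$ follows from $\rho_t = -u\rho_x - \rho u_x$ using the estimates above and the interpolation $\int_0^T\|u\|_{L^\infty(\Omega)}^2\,dt \leq C$ (which follows from $u\in L^\infty(0,T;L^2(\Omega))\cap L^2(0,T;H^1_0(\Omega))$).

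For the Hölder continuity of $\rho$, the spatial $\tfrac{1}{2}$-exponent is immediate from $\sup_t\int\rho_x^2\,dx \leq C$ by Cauchy--Schwarz. The temporal $\tfrac{1}{4}$-exponent follows from a standard averaging argument: for any $r\in(0,1/2)$ with $[x-r,x+r]\subset[0,1]$,
\[
|\rho(x,t)-\rho(x,s)| \leq C r^{1/2} + \frac{1}{2r}\Big|\int_{x-r}^{x+r}\!\int_s^t\rho_\tau(y,\tau)\,d\tau\,dy\Big| \leq C\big(r^{1/2} + r^{-1/2}|t-s|^{1/2}\big),
\]
where the first term bounds $\rho(x,\cdot)-\rho(y,\cdot)$ uniformly via the spatial Hölder estimate and the second uses Cauchy--Schwarz against $\|\rho_t\|_{L^2(Q_T)}\leq C$. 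Minimizing the right-hand side by choosing $r\sim|t-s|^{1/2}$ yields $|\rho(x,t)-\rho(x,s)|\leq C|t-s|^{1/4}$, completing the proof.
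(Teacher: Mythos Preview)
Your quantity $F=\lambda\rho_x/\rho+\rho u$ is exactly the paper's $\rho(u-\lambda\eta_x)$ with $\eta=1/\rho$, and your transport equation $D_tF+u_xF=-\gamma\rho\theta_x-\gamma\rho_x\theta-\mathbf{b}\cdot\mathbf{b}_x$ is just the conservation form $[\rho(u-\lambda\eta_x)]_t+[\rho u(u-\lambda\eta_x)]_x=\gamma\rho^2(\theta\eta_x-\eta\theta_x)-\mathbf{b}\cdot\mathbf{b}_x$ rewritten. So the underlying approach is the same as the paper's; the H\"older argument is identical as well.

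The one substantive difference is how the ``main obstacle'' $\tfrac12\int u_xF^2\,dx$ is handled. The paper sidesteps it entirely by testing with $(u-\lambda\eta_x)=F/\rho$ rather than $F$: because $(\rho g)_t+(\rho u g)_x=\rho D_tg$, multiplying by $g=F/\rho$ and integrating produces $\tfrac{d}{dt}\int\rho g^2/2\,dx$ with no leftover $u_xF^2$ term, and the Gronwall coefficient becomes $1+\|\theta\|_{L^\infty(\Omega)}\in L^1(0,T)$. Your Lagrangian alternative achieves exactly this cancellation and is correct. Your first option, however, has a gap: bounding $\|F\|_{L^\infty}$ from the ODE along characteristics requires $\int_0^T\|\theta_x\|_{L^\infty(\Omega)}\,dt$ and $\int_0^T\|\mathbf{b}\cdot\mathbf{b}_x\|_{L^\infty(\Omega)}\,dt$ for the source terms, neither of which is available at this point in the argument (only $\theta_x\in L^{3/2}(Q_T)$ and $|\mathbf{b}||\mathbf{b}_x|\in L^2(Q_T)$ are known). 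The $u_x$ part of the integrating factor is indeed controlled by the Jacobian $\rho/\rho_0$, as you note, but that does not rescue the source integral. So keep the Lagrangian route, or more simply multiply by $F/\rho$ in Eulerian coordinates.
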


\begin{proof}
Set $\eta=1/\rho$. It follows from the equation \eqref{e1}$_1$ that
$$
u_x=\rho(\eta_t+u\eta_x).
$$
Substituting it into \eqref{e1}$_2$ yields
\begin{equation*}
\left[\rho(u-\lambda\eta_x)\right]_t+\left[\rho
u(u-\lambda\eta_x)\right]_x
=\gamma\rho^2(\theta\eta_x-\eta\theta_x)-\mathbf{b}\cdot\mathbf{b}_x.
\end{equation*}
Multiplying it by $(u-\lambda\eta_x)$ and integrating over $Q_t$, we
have
\begin{equation*}
\begin{split}
&\frac12\int_\Omega\rho
(u-\lambda\eta_x)^2dx+\gamma\lambda\iint_{Q_t}
\theta\rho^2\eta_x^{2}dxds\\
&=\frac12\int_\Omega\rho_0(u_0+ \lambda\rho_0^{-2}\rho_{0x})^2dx+\gamma\iint_{Q_t}\rho^2\theta u  \eta_x dxds
\\
&\quad-\gamma\iint_{Q_t}\rho^2\eta\theta_x (u-\lambda\eta_x)dxds-\iint_{Q_t}\mathbf{b}\cdot\mathbf{b}_x(u-\lambda\eta_x)dxds.
\end{split}
\end{equation*}
To estimate the second integral on right-hand side, we use Young's
inequality,  Lemmas 2.1 and 2.2 to obtain
\begin{equation*}
\begin{split}
 \gamma\iint_{Q_t}\rho^2\theta u  \eta_x dxds
 &\leq
\frac{\gamma\lambda}{2}\iint_{Q_t}\theta\rho^{2}\eta_x^2dxds+C\iint_{Q_t} \theta u^2 dxds\\
&\leq  \frac{\gamma\lambda}{2}\iint_{Q_t}\theta\rho^{2}\eta_x^2dxds+C\int_0^t\|\theta\|_{L^\infty(\Omega)}
\int_{\Omega} u^2 dxds\\
&\leq
C+\frac{\gamma\lambda}{2}\iint_{Q_t}\theta\rho^{2}\eta_x^2dxds.
\end{split}
\end{equation*}
On the other hand, we have by Cauchy-Schwarz's inequality, \eqref{theta01} and Lemma \ref{2.3}
\begin{equation*}
\begin{split}
&-\gamma\iint_{Q_t}\rho^2\eta\theta_x (u-\lambda\eta_x)dxds-\iint_{Q_t}\mathbf{b}\cdot\mathbf{b}_x(u-\lambda\eta_x)dxds\\
&\leq  C+C\iint_{Q_t}\theta \rho
(u-\lambda\eta_x)^2dxds+C\iint_{Q_t}\frac{\theta_x^2}{\theta}dxds+C\iint_{Q_t} \rho(u-\lambda\eta_x)^2dxds\\
&\leq  C+C\int_0^t\big(1+\|\theta\|_{L^\infty(\Omega)}\big)\int_\Omega\rho
(u-\lambda\eta_x)^2dxds.
\end{split}
\end{equation*}
Combining the above results yields
\begin{equation*}
\begin{split}
  & \int_\Omega\rho
(u-\lambda\eta_x)^2dx+ \iint_{Q_t} \theta\rho^2\eta_x^{2}dxds\\
&\leq C
+C\int_0^t\big(1+\|\theta\|_{L^\infty(\Omega)}\big)\int_\Omega\rho
(u-\lambda\eta_x)^2dxds,
\end{split}
\end{equation*}
 which together with  Gronwall's inequality gives
 \begin{equation*}\label{rho22}
\begin{split}
&\sup\limits_{0<t<T}\int_\Omega\rho_x^2dx+\iint_{Q_T}  \theta \rho_x^2 dxds\leq C .
\end{split}
\end{equation*}
 By this estimate  and Lemma 2.2, we derive from the equation \eqref{e1}$_1$ that
\begin{equation*}
\begin{split}
 \iint_{Q_T} \rho_t^2dxdt \leq & C\int_0^T\|u^2\|_{L^\infty(\Omega)}\int_\Omega\rho_x^2dxdt+C\iint_{Q_T}u_x^2dxdt \leq C.\\
\end{split}
\end{equation*}
 Thus \eqref{rho}$_1$ holds.

  Now we prove the second estimate. Let $\beta(x)=\rho(x,t)-\rho(x,s)$ for any $x \in [0, 1]$
  and $s, t \in [0, T]$ with $s\neq t$. Then for any $x\in [0, 1]$ and $\delta \in (0, 1/2]$, there exist  some
  $y\in [0, 1]$ and $\xi$ between $x$ and $y$ such that $\delta=|y-x|$  and
  $\beta(\xi)=\frac{1}{x-y}\int^x_y\beta(z)dz$, and hence
\begin{equation*}
\begin{split}
\beta(x)=\frac{1}{x-y}\int^x_y\beta(z)dz+\int_\xi^x\beta'(z)dz,
\end{split}
\end{equation*}
therefore, by H\"{o}lder's inequality and \eqref{rho22},
\begin{equation*}
\begin{split}
|\beta(x)|\leq& \frac{1}{\delta}\left|\int^x_y\beta(z)dz\right|+\left|\int_\xi^x\beta'(z)dz\right|\\[1mm]
\leq &\frac{1}{\delta}\left|\int^x_y\hspace{-2mm}\int_s^t\rho_\tau d\tau dz\right|+\left|\int_\xi^x\left[\rho_z(z,t)-\rho_z(z,s)\right]dz\right|\\[1mm]
\leq &\frac{1}{\delta}\left(\iint_{Q_T}\rho_\tau^2 d\tau dz\right)^{1/2}|x-y|^{1/2}|s-t|^{1/2}\\[1mm]
&+ \left(\int_0^1(|\rho_z(z,s)|^2+|\rho_z(z,t)|^2)dz\right)^{1/2}|x-\xi|^{1/2}\\[1mm]
\leq & C\delta^{-1/2}|s-t|^{1/2}+C\delta^{1/2}.
\end{split}
\end{equation*}
If $0<|s-t|^{1/2}<1/2$, taking $\delta=|s-t|^{1/2}$ yields
\begin{equation}\label{rho6}
\begin{split}
|\rho(x,s)-\rho(x,t)|\leq C|s-t|^{1/4}.
\end{split}
\end{equation}
If $ |s-t|^{1/2}\geq 1/2$, then \eqref{rho6} holds since $\rho$ is uniformly bounded in $\mu$.

On the other hand, we have  by \eqref{rho}$_1$
\begin{equation}
\begin{split}
|\rho(x,t)-\rho(y,t)|=\left|\int_y^x\rho_zdz\right|\leq C|x-y|^{1/2}.
\end{split}
\end{equation}
Thus, \eqref{rho}$_2$ is a consequence of the triangle inequality. The proof is complete.
\end{proof}

To deduce other required $\mu$-uniform estimates, we need the
following  lemma which plays an important role in this paper.

\begin{lemma}\label{2.5}Let \eqref{kappa} and \eqref{assumption1} hold. Then
\begin{equation}\label{uxx}
\begin{split}
\iint_{Q_T}|u_{xx}|^{m_0} dxdt\leq C,\quad m_0=\min\{m, 4/3\}.
\end{split}
\end{equation}
In particular,
\begin{equation}\label{u0}
\begin{split}
\int_0^T\|u_{x}\|_{L^\infty(\Omega)}^{m_0}dt \leq C.
\end{split}
\end{equation}
\end{lemma}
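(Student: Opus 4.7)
The plan is to view $\eqref{e1}_2$ as a linear nondegenerate parabolic equation for $u$ and apply the classical parabolic $L^p$-theory. Using $\eqref{e1}_1$ to cancel the $\rho_t$ produced by expanding $(\rho u)_t$ and dividing by $\rho$, one gets
\[
u_t-\frac{\lambda}{\rho}u_{xx}=f,\qquad f:=-uu_x-\gamma\frac{\theta\rho_x}{\rho}-\gamma\theta_x-\frac{1}{\rho}\mathbf{b}\cdot\mathbf{b}_x,
\]
with $u|_{x=0,1}=0$ and $u|_{t=0}=u_0\in W^{2,m}(\Omega)$. By Lemmas~\ref{2.2} and~\ref{densitydevirative}, the coefficient $\lambda/\rho$ is bounded above, bounded away from zero, and H\"{o}lder continuous on $\overline{Q_T}$, uniformly in $\mu$. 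Hence the classical parabolic $L^p$-estimate (e.g.\ Ladyzhenskaya--Solonnikov--Ural'ceva) yields, for every $p\in(1,\infty)$,
\[
\|u_t\|_{L^p(Q_T)}+\|u_{xx}\|_{L^p(Q_T)}\leq C\bigl(\|f\|_{L^p(Q_T)}+\|u_0\|_{W^{2,p}(\Omega)}\bigr),
\]
with $C$ independent of $\mu$. With $m_0=\min\{m,4/3\}$ and $u_0\in W^{2,m}\hookrightarrow W^{2,m_0}$, it suffices to bound $f$ in $L^{m_0}(Q_T)$ independently of $\mu$, and in fact I will do this at the level $L^{4/3}(Q_T)$.

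The heart of the matter is estimating the four pieces of $f$ in $L^{4/3}(Q_T)$. For $uu_x$: since $u|_{x=0,1}=0$, one has $\|u(\cdot,t)\|_{L^\infty(\Omega)}\leq\|u_x(\cdot,t)\|_{L^2(\Omega)}$, which is in $L^2(0,T)$ by Lemma~\ref{2.1}; interpolating with $u\in L^\infty(0,T;L^2(\Omega))$ yields $u\in L^4(Q_T)$, so $uu_x\in L^{4/3}(Q_T)$ via H\"{o}lder with exponents $(3,3/2)$. For $\theta\rho_x/\rho$: the bounds $\rho\geq C^{-1}$ and $\|\rho_x\|_{L^\infty(0,T;L^2)}\leq C$ from Lemma~\ref{densitydevirative} give $\|\theta\rho_x/\rho\|_{L^{4/3}(\Omega)}\leq C\|\theta\|_{L^\infty(\Omega)}$, and \eqref{theta00} places $\|\theta\|_{L^\infty(\Omega)}$ in $L^2(0,T)\subset L^{4/3}(0,T)$. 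For $\theta_x$: by \eqref{rho11}$_7$ one has $\theta_x\in L^{3/2}(Q_T)\subset L^{4/3}(Q_T)$. For the magnetic source, the one-dimensional Sobolev inequality
\[
\|\mathbf{b}\|_{L^\infty(\Omega)}^{2}\leq\|\mathbf{b}\|_{L^2(\Omega)}^{2}+2\|\mathbf{b}\|_{L^2(\Omega)}\|\mathbf{b}_x\|_{L^2(\Omega)},
\]
combined with $\|\mathbf{b}\|_{L^2(\Omega)}\leq C$ from Lemma~\ref{2.1} and \eqref{rho11}$_5$, gives $\|\mathbf{b}\|_{L^\infty(\Omega)}\in L^4(0,T)$. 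Using $\|\mathbf{b}_x\|_{L^{4/3}(\Omega)}\leq\|\mathbf{b}_x\|_{L^2(\Omega)}$ and H\"{o}lder in time with exponents $(3,3/2)$,
\[
\iint_{Q_T}\bigl|\mathbf{b}\cdot\mathbf{b}_x/\rho\bigr|^{4/3}dxdt\leq C\Bigl(\int_0^T\|\mathbf{b}\|_{L^\infty}^{4}dt\Bigr)^{1/3}\Bigl(\int_0^T\|\mathbf{b}_x\|_{L^2}^{2}dt\Bigr)^{2/3}\leq C.
\]
Summing the four estimates, $f\in L^{4/3}(Q_T)$ uniformly in $\mu$, and the parabolic $L^{m_0}$-theory delivers \eqref{uxx}.

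Finally, \eqref{u0} is immediate from the one-dimensional Sobolev embedding $W^{1,m_0}(\Omega)\hookrightarrow L^\infty(\Omega)$ applied to $u_x(\cdot,t)$, integrated in time against \eqref{uxx} and the $\mu$-uniform bound $\|u_x\|_{L^2(Q_T)}\leq C$. The main obstacle is controlling the magnetic source $\mathbf{b}\cdot\mathbf{b}_x$: only the basic energy bound $\mathbf{b}_x\in L^2(Q_T)$ is available uniformly in $\mu$, and it is the additional integrability of $\|\mathbf{b}\|_{L^\infty(\Omega)}$ in $L^4(0,T)$---extracted from the Sobolev trick using the $\mu$-independent $L^2_x$ bound on $\mathbf{b}$---that just manages to close the $L^{4/3}$ estimate. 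This is precisely why the exponent $m_0$ cannot be pushed beyond $4/3$ no matter how large $m$ is chosen.
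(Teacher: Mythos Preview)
Your argument is correct and follows the same overall route as the paper: rewrite $\eqref{e1}_2$ as a nondegenerate linear parabolic equation for $u$, use the uniform H\"older continuity of $\lambda/\rho$ from Lemma~\ref{densitydevirative}, and invoke the parabolic $L^p$ Schauder theory once the source $f$ is bounded in $L^{4/3}(Q_T)$ uniformly in $\mu$.

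The only notable difference is in the treatment of the magnetic contribution $\rho^{-1}\mathbf{b}\cdot\mathbf{b}_x$. The paper calls on Lemma~\ref{2.3}, which gives $\iint_{Q_T}|\mathbf{b}|^2|\mathbf{b}_x|^2\,dx\,dt\leq C$ and hence places $\mathbf{b}\cdot\mathbf{b}_x$ in $L^2(Q_T)$ directly. You bypass Lemma~\ref{2.3} and instead extract $\|\mathbf{b}(\cdot,t)\|_{L^\infty(\Omega)}\in L^4(0,T)$ from the one-dimensional Sobolev inequality and the basic energy bounds, which is enough to close the $L^{4/3}$ estimate via H\"older. Both are fine; your route is slightly more elementary since it avoids the separate energy argument of Lemma~\ref{2.3}. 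One small correction to your closing remark: with Lemma~\ref{2.3} in hand, the magnetic term is not the bottleneck---the paper's limiting term is $\rho_x\theta$, for which only $L^{4/3}$ is available from $\rho_x\in L^\infty_tL^2_x$ and $\theta\in L^2_tL^\infty_x$. So the constraint $m_0\le 4/3$ comes from the pressure gradient, not the Lorentz force.
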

\begin{proof}Note that the estimate \eqref{u0} is an immediate consequence of \eqref{uxx}. Thus, it is enough to prove \eqref{uxx}. To this end, we rewrite the equation \eqref{e1}$_2$ as
\begin{equation}\label{f2}
\begin{split}
&
 u_t-\frac{\lambda}{\rho}u_{xx}=-uu_x-
 \gamma\theta_x-\frac{\gamma}{\rho}\rho_x\theta-\frac{1}{\rho}\mathbf{b}\cdot\mathbf{b}_x=:f.
\end{split}
\end{equation}
We will apply $L^p$ estimates of linear parabolic equations  (cf.\cite[Theorem 7.17]{Lie}) to show \eqref{uxx}. From \eqref{rho}$_2$,
the coefficient $a(x,t):=\lambda/\rho$ satisfies
$$\left|a(x,t)-a(y,s)\right|\leq C\left(|x-y|^{1/2}
+|s-t|^{1/4}\right),\quad\forall (x, t), (y, s) \in \overline Q_T.
$$
Due to the condition on $u_0$ in \eqref{assumption1}, we only need
to give a uniform bound of $f$ in $L^{4/3}(Q_T)$.

From Lemmas 2.2 and 2.3,  the second term and the forth term on right-hand side of \eqref{f2} are
 uniformly bounded in $L^{3/2}(Q_T)$ and $L^{2}(Q_T)$, respectively.

To deal with the first term on right-hand side of \eqref{f2}, we observe by H\"{o}lder inequality and Lemma 2.1
\begin{equation*}
\begin{split}
 u^2&\leq2\int_\Omega|uu_x|dx \leq 2\left(\int_\Omega u^2dx\right)^{1/2}\left(\int_\Omega u_x^2dx\right)^{1/2} \leq C\left(\int_\Omega u_x^2dx\right)^{1/2},\\
\end{split}
\end{equation*}
therefore, we have by Lemma 2.2
\begin{equation*}
\begin{split}
\int_0^T\|u\|_{L^\infty}^4 dt
 \leq C \iint_{Q_T} u_x^2dx\leq C,
\end{split}
\end{equation*}
which together with Young's inequality yields
\begin{equation*}
\begin{split}
  \iint_{Q_T}|uu_x|^{3/2}dxdt &\leq  C\iint_{Q_T}u_x^2dxdt+C\iint_{Q_T}u^6dxdt\\
 &\leq C+ C \int_0^T\|u\|_{L^\infty(\Omega)}^4 \int_{\Omega}u^2dxdt \leq C.
\end{split}
\end{equation*}

As to the third term on right-hand side of \eqref{f2}, we have by \eqref{rho}$_1$ and \eqref{theta00}
\begin{equation*}
\begin{split}
  \iint_{Q_T}|\rho_x \theta|^{4/3}dxdt
 &\leq C\iint_{Q_T}\rho_x^2\theta dxdt+C\iint_{Q_T}\theta^{2}dxdt
 \leq C.
\end{split}
\end{equation*}

Combining the above results gives $\|f\|_{L^{4/3}(Q_T)}\leq C.$ The
proof is then completed.
\end{proof}

By a direct application of the above lemma, we obtain
\begin{lemma}\label{2.6}
Let \eqref{kappa} and \eqref{assumption1} hold. Then
\begin{equation*}\label{we11}
\begin{split}
& \mu\sup\limits_{0<t<T}\int_\Omega|\mathbf{w}_x|^2dx+\mu^2 \iint_{Q_T}
 |\mathbf{w}_{xx}|^2 dxds  \leq C.
\end{split}
\end{equation*}
 \end{lemma}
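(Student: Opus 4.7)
The plan is to reduce to homogeneous boundary conditions, multiply by the time derivative, and then read off the second-order estimate directly from the equation. Introduce the linear lift
$$\mathbf{W}(x,t):=(1-x)\mathbf{w}^-(t)+x\mathbf{w}^+(t),\qquad \mathbf{v}:=\mathbf{w}-\mathbf{W},$$
so that $\mathbf{v}|_{x=0,1}=\mathbf{0}$, $\mathbf{W}_{xx}\equiv 0$, and by \eqref{assumption1} the quantities $\mathbf{W}$, $\mathbf{W}_t$, $\mathbf{W}_x$ are uniformly bounded on $\overline{Q}_T$. Using \eqref{e1}$_1$, the third equation of \eqref{e1} becomes
$$\rho\mathbf{v}_t-\mu\mathbf{v}_{xx}=-\rho u\mathbf{v}_x+\mathbf{b}_x-\rho\mathbf{W}_t-\rho u\mathbf{W}_x.$$

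Multiplying by $\mathbf{v}_t$ and integrating over $\Omega$, the boundary contribution from integrating $-\mu\mathbf{v}_{xx}\cdot\mathbf{v}_t$ by parts vanishes because $\mathbf{v}_t|_{x=0,1}=0$, and one obtains
$$\int_\Omega\rho|\mathbf{v}_t|^2\,dx+\frac{\mu}{2}\frac{d}{dt}\int_\Omega|\mathbf{v}_x|^2\,dx=\int_\Omega\bigl(\mathbf{b}_x-\rho u\mathbf{v}_x-\rho\mathbf{W}_t-\rho u\mathbf{W}_x\bigr)\cdot\mathbf{v}_t\,dx.$$
Young's inequality absorbs each $\mathbf{v}_t$-factor on the right into $\tfrac{1}{2}\int\rho|\mathbf{v}_t|^2\,dx$, leaving
$$\int_\Omega\rho|\mathbf{v}_t|^2\,dx+\mu\frac{d}{dt}\int_\Omega|\mathbf{v}_x|^2\,dx\leq C\bigl(1+\|u\|_{L^\infty(\Omega)}^2\bigr)\int_\Omega|\mathbf{v}_x|^2\,dx+C\int_\Omega|\mathbf{b}_x|^2\,dx+C.$$

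Now the estimate $\int_0^T\|u\|_{L^\infty(\Omega)}^4\,dt\leq C$ (established inside the proof of Lemma~\ref{2.5}) together with Lemma~\ref{2.2} gives $\int_0^T\|u\|_{L^\infty}^2\,dt+\iint_{Q_T}|\mathbf{b}_x|^2\,dxdt\leq C$, and $\mu\int_\Omega|\mathbf{v}_x(x,0)|^2\,dx\leq C\mu$ since $\mathbf{w}_0\in W^{1,2}(\Omega)$. Gronwall's inequality therefore yields
$$\mu\sup_{0<t<T}\int_\Omega|\mathbf{v}_x|^2\,dx+\iint_{Q_T}\rho|\mathbf{v}_t|^2\,dxdt\leq C.$$
Since $\mathbf{W}_x=\mathbf{w}^+-\mathbf{w}^-$ is bounded, the same bound holds for $\mu\|\mathbf{w}_x\|_{L^\infty(0,T;L^2)}^2$, and together with $\rho\geq C^{-1}$ we also get $\iint_{Q_T}|\mathbf{w}_t|^2\,dxdt\leq C$.

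For the second bound I simply rearrange $\eqref{e1}_3$ as $\mu\mathbf{w}_{xx}=\rho\mathbf{w}_t+\rho u\mathbf{w}_x-\mathbf{b}_x$, square, and integrate:
$$\mu^2\iint_{Q_T}|\mathbf{w}_{xx}|^2\,dxdt\leq C\iint_{Q_T}\bigl(|\mathbf{w}_t|^2+u^2|\mathbf{w}_x|^2+|\mathbf{b}_x|^2\bigr)\,dxdt.$$
All terms but the convective one were just controlled; for the remaining one I use the just-proved $\|\mathbf{w}_x\|_{L^2(\Omega)}^2\leq C/\mu$ to write
$$\mu^2\iint_{Q_T}u^2|\mathbf{w}_x|^2\,dxdt\leq\mu^2\int_0^T\|u\|_{L^\infty(\Omega)}^2\|\mathbf{w}_x\|_{L^2(\Omega)}^2\,dt\leq C\mu\int_0^T\|u\|_{L^\infty(\Omega)}^2\,dt\leq C\mu,$$
which is harmless as $\mu\downarrow 0$. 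The main obstacle is precisely this convection term: the factor $\|\mathbf{w}_x\|_{L^2}^2$ is of order $1/\mu$, so the $\mu^2$ prefactor is needed to tame it, and this in turn hinges on having $\|u\|_{L^\infty}\in L^2(0,T)$, which is supplied by Lemma~\ref{2.5}.
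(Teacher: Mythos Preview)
There is a genuine gap in the Gronwall step. After Young's inequality you correctly arrive at
\[
\int_\Omega\rho|\mathbf{v}_t|^2\,dx+\mu\frac{d}{dt}\int_\Omega|\mathbf{v}_x|^2\,dx\leq C\bigl(1+\|u\|_{L^\infty(\Omega)}^2\bigr)\int_\Omega|\mathbf{v}_x|^2\,dx+C\int_\Omega|\mathbf{b}_x|^2\,dx+C,
\]
but the term $C(1+\|u\|_{L^\infty}^2)\int_\Omega|\mathbf{v}_x|^2\,dx$ carries \emph{no} factor of $\mu$. If you set $Y(t)=\mu\int_\Omega|\mathbf{v}_x|^2\,dx$ and integrate in time, this term becomes $\mu^{-1}\int_0^tC(1+\|u\|_{L^\infty}^2)Y(s)\,ds$, so Gronwall produces the exponential $\exp\bigl(C\mu^{-1}\int_0^T(1+\|u\|_{L^\infty}^2)\,dt\bigr)$, which blows up as $\mu\downarrow 0$. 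Your claimed byproduct $\iint_{Q_T}\rho|\mathbf{v}_t|^2\,dxdt\leq C$ uniformly in $\mu$ is in fact too strong to be available at this stage; the paper obtains $\|\mathbf{w}_t\|_{L^2(Q_T)}\leq C$ only in Lemma~\ref{2.10}, after several further weighted estimates that themselves rely on Lemma~\ref{2.6}.

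The paper's cure is to test with $\mu\mathbf{w}_{xx}$ rather than $\mathbf{v}_t$. Then the convection term reads $\mu\iint_{Q_t}u\mathbf{w}_x\cdot\mathbf{w}_{xx}\,dxds$, which after one integration by parts in $x$ (using $u|_{x=0,1}=0$) equals $-\tfrac{\mu}{2}\iint_{Q_t}u_x|\mathbf{w}_x|^2\,dxds$ and is bounded by $\int_0^t\|u_x\|_{L^\infty(\Omega)}\bigl(\mu\int_\Omega|\mathbf{w}_x|^2\,dx\bigr)ds$. Now the $\mu$ sits \emph{inside} the quantity to which Gronwall is applied, and the coefficient $\|u_x\|_{L^\infty}\in L^{m_0}(0,T)$ uniformly by Lemma~\ref{2.5}. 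The price is a boundary term $\mu\int_0^t\mathbf{w}_t\cdot\mathbf{w}_x\big|_{x=0}^{x=1}\,ds$ (your lift would kill it, but the convection scaling is the essential point), which the paper handles by the interpolation $\|\mathbf{w}_x\|_{L^\infty}^2\leq C+C\|\mathbf{w}_x\|_{L^2}\|\mathbf{w}_{xx}\|_{L^2}$.
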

\begin{proof}
We rewrite  \eqref{e1}$_3$ in the form
\begin{equation}\label{w12}
\begin{split}
 \mathbf{w}_t-\frac{\mu}{\rho}\mathbf{w}_{xx}=\frac{1}{\rho}\mathbf{b}_x-u\mathbf{w}_x,
\end{split}
\end{equation}
and multiply it by $\mu\mathbf{w}_{xx}$ and integrating over $Q_t$
to obtain
\begin{equation}\label{0v100}
\begin{split}
 & \frac\mu2\int_\Omega|\mathbf{w}_x|^2dx
 +\mu^2\iint_{Q_t}\frac{1}{\rho}|\mathbf{w}_{xx}|^2dxdx\\
  &=\frac\mu2\int_\Omega|\mathbf{w}_{0x}|^2dx-\mu\iint_{Q_t}\frac{1}{\rho}\mathbf{b}_x\cdot\mathbf{w}_{xx}dxds\\
&\quad -\frac{\mu}{2}\iint_{Q_t}u_x |\mathbf{w}_x|^2dxds+\mu\int_0^t \mathbf{w}_{t}\cdot\mathbf{w}_{x} \Big|_{x=0}^{x=1} ds\\
& \leq C\mu+\frac{\mu^2}{4}\iint_{Q_t}\frac{1}{\rho}|\mathbf{w}_{xx}|^2dxds+C\iint_{Q_t}|\mathbf{b}_x|^2dxds\\
&\quad+ C\int_0^t\|u_x\|_{L^\infty(\Omega)}\left(\mu\int_\Omega|\mathbf{w}_x|^2dx\right)ds+C\mu\int_0^t\|\mathbf{w}_x\|_{L^\infty(\Omega)} ds.
  \end{split}
\end{equation}
From the mean value theorem and H$\ddot{o}$lder inequality, we obtain
\begin{equation}\label{wx2}
\begin{split}
   |\mathbf{w}_x|^2\leq & \Big|\frac{\mathbf{w}(b,t)-\mathbf{w}(a,t)}{b-a}\Big|^2+2\int_\Omega|\mathbf{w}_{x}||\mathbf{w}_{xx}|dx\\
   \leq& C+C\left(\int_\Omega|\mathbf{w}_{x}|^2dx\right)^{1/2}\left(\int_\Omega|\mathbf{w}_{xx}|^2dx\right)^{1/2},
\end{split}
\end{equation}
and so,  Young's inequality yields
\begin{equation*}\label{w5}
\begin{split}
   \mu\int_0^t\|\mathbf{w}_x\|_{L^\infty(\Omega)} ds
   \leq& C\mu+C\int_0^t\mu^{1/4}\left(\mu\int_\Omega|\mathbf{w}_{x}|^2dx\right)^{1/4}
   \left(\mu^2\int_\Omega|\mathbf{w}_{xx}|^2dx\right)^{1/4}ds\\[1mm]
\leq &C\sqrt{\mu}+\frac{C  \mu}{\epsilon}\iint_{Q_t}
|\mathbf{w}_{x}|^2dxds+\epsilon \mu^2
\iint_{Q_t}\frac{1}{\rho}|\mathbf{w}_{xx}|^2dxds, \forall \epsilon \in (0, 1).
\end{split}
\end{equation*}
Inserting  it into \eqref{0v100} and taking a small $\epsilon>0$, we find that
\begin{equation*}
\begin{split}
  & \mu\int_\Omega|\mathbf{w}_x|^2dx
 +\mu^2\iint_{Q_t}\frac{1}{\rho}|\mathbf{w}_{xx}|^2dxds \\
 &\leq C  +C\int_0^t\big(1+\|u_x\|_{L^\infty}\big)\left(\mu\int_{Q_t}|\mathbf{w}_x|^2dx\right)ds.
  \end{split}
\end{equation*}
Thus, the lemma follows from Gronwall's inequality and  \eqref{u0}. This proof is complete.
\end{proof}

Our next main task is to show the other estimates appearing in
Theorem 1.1. To this end, we need  three preliminary lemmas. The
first one reads as

\begin{lemma}\label{2.7} Let \eqref{kappa} and \eqref{assumption1} hold. Then
\begin{equation*}
\begin{split}
    & \int_\Omega|\mathbf{b}_x|^2\omega^2dx+ \iint_{Q_t}|\mathbf{b}_{xx}|^2\omega^2dxds
    \leq C\iint_{Q_t}  |\mathbf{w}_{x}|^2\omega^2  dxds+C\left(\iint_{Q_t} u_{xx}^2
    dxds\right)^{1/2},
  \end{split}
\end{equation*}
where $\omega$ is the same as that in Theorem 1.1.
 \end{lemma}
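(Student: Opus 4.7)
The plan is to test the magnetic equation in the form $\mathbf{b}_t+(u\mathbf{b})_x-\mathbf{w}_x=\nu\mathbf{b}_{xx}$ against $\mathbf{b}_{xx}\omega^2$ and integrate over $Q_t$. Since $\omega^2$ vanishes at $x=0$ and $x=1$, every spatial boundary term produced by the forthcoming integrations by parts drops out identically. Integrating by parts in $x$ on the integrand $\mathbf{b}_t\,\mathbf{b}_{xx}\omega^2$ (where the correction $-2\mathbf{b}_t\cdot\mathbf{b}_x\,\omega\omega'$ comes from differentiating the weight) produces the identity
\[
\frac12\int|\mathbf{b}_x|^2\omega^2\,dx + \nu\iint|\mathbf{b}_{xx}|^2\omega^2\,dx\,ds = \frac12\int|\mathbf{b}_{0x}|^2\omega^2\,dx + A + B + C,
\]
with $A=-2\iint\mathbf{b}_t\cdot\mathbf{b}_x\,\omega\omega'$, $B=\iint(u\mathbf{b})_x\cdot\mathbf{b}_{xx}\omega^2$, and $C=-\iint\mathbf{w}_x\cdot\mathbf{b}_{xx}\omega^2$.

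For $C$ I would apply Young's inequality, absorbing $\epsilon\iint|\mathbf{b}_{xx}|^2\omega^2$ into the left-hand side and leaving $C_\epsilon\iint|\mathbf{w}_x|^2\omega^2$, which is exactly the first term on the right of the claim. For $A$, I would substitute $\mathbf{b}_t = \nu\mathbf{b}_{xx}-(u\mathbf{b})_x+\mathbf{w}_x$ from the equation itself; each of the three resulting contributions carries a factor $\omega\omega'=O(1)$ instead of $\omega^2$, and is dispatched by Cauchy--Schwarz together with the already-established uniform bounds $\iint|\mathbf{b}_x|^2\le C$ and $\sup_t\|\mathbf{b}\|_{L^4}^4\le C$ from Lemmas \ref{2.1}--\ref{2.3}, with small multiples of $\iint|\mathbf{b}_{xx}|^2\omega^2$ again absorbed on the left.

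The interesting term is $B=\iint u_x\mathbf{b}\cdot\mathbf{b}_{xx}\omega^2+\iint u\mathbf{b}_x\cdot\mathbf{b}_{xx}\omega^2$. The second piece I rewrite as $\frac12\iint u(|\mathbf{b}_x|^2)_x\omega^2$ and integrate by parts once more to obtain $-\frac12\iint u_x|\mathbf{b}_x|^2\omega^2-\iint u|\mathbf{b}_x|^2\omega\omega'$, which is controlled by $\int_0^t(\|u_x\|_{L^\infty}+\|u\|_{L^\infty})\int|\mathbf{b}_x|^2\omega^2\,dx\,ds$ and is Gronwall-ready, the multiplier being integrable in time thanks to Lemma \ref{2.5}. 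For the first piece, Cauchy--Schwarz with $\epsilon$-absorption of $\iint|\mathbf{b}_{xx}|^2\omega^2$ reduces matters to bounding $\iint u_x^2|\mathbf{b}|^2\omega^2$. Here the trick that delivers the square-root form in the claim is a further integration by parts in $x$, using $u|_{x=0,1}=0$:
\[
\iint u_x^2|\mathbf{b}|^2\omega^2 = -\iint u\,u_{xx}|\mathbf{b}|^2\omega^2 - 2\iint u\,u_x\mathbf{b}\cdot\mathbf{b}_x\omega^2 - 2\iint u\,u_x|\mathbf{b}|^2\omega\omega'.
\]
The leading contribution is then bounded by Cauchy--Schwarz as $\bigl(\iint u_{xx}^2\bigr)^{1/2}\bigl(\iint u^2|\mathbf{b}|^4\omega^4\bigr)^{1/2}\le C\bigl(\iint u_{xx}^2\bigr)^{1/2}$, where the second factor is bounded by combining $\int_0^T\|u\|_{L^\infty}^4\,dt\le C$ (from the proof of Lemma \ref{2.5}) with $\sup_t\|\mathbf{b}\|_{L^4}^4\le C$ (Lemma \ref{2.3}). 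This is precisely how the term $C(\iint u_{xx}^2)^{1/2}$ in the claim is born; the two remaining pieces are of lower order and are absorbed via Young's inequality plus the same uniform bounds. A final Gronwall argument applied to $t\mapsto\int|\mathbf{b}_x(\cdot,t)|^2\omega^2\,dx$ closes the inequality.

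The main obstacle will be exactly the term $\iint u_x^2|\mathbf{b}|^2\omega^2$: a direct H\"older split using $\|u_x\|_{L^\infty}\le\|u_{xx}\|_{L^2}$ (available since $\int_0^1 u_x\,dx=0$ forces $u_x$ to vanish at some interior point) only yields an upper bound of the form $C\iint u_{xx}^2$, which is too weak for the square-root estimate the lemma asks for. Recognizing that one extra integration by parts in $x$, enabled by the boundary vanishing of $u$, converts this quantity into a genuine $\iint u\,u_{xx}|\mathbf{b}|^2\omega^2$ that pairs cleanly against $\bigl(\iint u_{xx}^2\bigr)^{1/2}$ via Cauchy--Schwarz---with the other factor bounded by the $L^4_tL^\infty_x$ bound on $u$ combined with the $L^\infty_tL^4_x$ bound on $\mathbf{b}$---is the essential observation that makes the claimed inequality hold.
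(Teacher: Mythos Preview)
Your overall architecture matches the paper's: multiply $\mathbf{b}_t+(u\mathbf{b})_x-\mathbf{w}_x=\nu\mathbf{b}_{xx}$ by $\mathbf{b}_{xx}\omega^2$, integrate, substitute for $\mathbf{b}_t$ in the cross term, and close by Gronwall. The treatment of the terms $A$ and $C$ is essentially identical to the paper's. The difference lies in how you extract the factor $(\iint u_{xx}^2)^{1/2}$ from $\iint u_x^2|\mathbf{b}|^2\omega^2$.

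You dismiss the direct route, writing that ``$\|u_x\|_{L^\infty}\le\|u_{xx}\|_{L^2}$ \dots\ only yields an upper bound of the form $C\iint u_{xx}^2$,'' and then invent an integration-by-parts detour through $u_x^2=(uu_x)_x-uu_{xx}$. Your detour does work (the lower-order pieces $-2\iint uu_x\mathbf{b}\cdot\mathbf{b}_x\omega^2$ and $-2\iint uu_x|\mathbf{b}|^2\omega\omega'$ are indeed harmless after a Young split that feeds $\iint u^2|\mathbf{b}_x|^2\omega^2$ back into Gronwall), but the paper shows that your dismissal was premature. Since $\int_\Omega u_x\,dx=0$, there is an interior zero of $u_x$, so $\|u_x\|_{L^\infty}^2\le 2\int_\Omega|u_x||u_{xx}|\,dx$ rather than merely $\|u_x\|_{L^\infty}\le\|u_{xx}\|_{L^2}$. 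Integrating in time and applying Cauchy--Schwarz over $Q_t$ together with the known bound $\iint u_x^2\le C$ (Lemma \ref{2.2}) gives directly
\[
\int_0^t\|u_x\|_{L^\infty(\Omega)}^2\,ds\;\le\;C\iint_{Q_t}|u_x u_{xx}|\,dx\,ds\;\le\;C\Bigl(\iint_{Q_t}u_{xx}^2\Bigr)^{1/2},
\]
which is the paper's inequality (2.26). Then $\iint u_x^2|\mathbf{b}|^2\omega^2\le C\int_0^t\|u_x\|_{L^\infty}^2\le C(\iint u_{xx}^2)^{1/2}$ in one line, with no further integration by parts.

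One small slip: in handling $-\iint u|\mathbf{b}_x|^2\omega\omega'$ you write that it is controlled by $\int_0^t\|u\|_{L^\infty}\int|\mathbf{b}_x|^2\omega^2$, but the weight produced is $\omega|\omega'|=\omega$, not $\omega^2$. The paper cures this by the pointwise bound $|u(x,t)|\le\|u_x\|_{L^\infty}\,\omega(x)$ (its (2.25)), which restores the $\omega^2$ and makes the term genuinely Gronwall-ready; alternatively one can simply bound it by $C\iint|\mathbf{b}_x|^2\le C$. Either way the gap is cosmetic, not structural.
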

 \begin{proof}
Multiplying \eqref{e1}$_4$ by $\mathbf{b}_{xx}\omega^2(x)$ and
integrating over $Q_t$, we have
\begin{equation}\label{b111}
\begin{split}
   &-\iint_{Q_t} \mathbf{b}_t\cdot \mathbf{b}_{xx}\omega^2 dxdt+\nu\iint_{Q_t} |\mathbf{b}_{xx}|^2\omega^2 dxds\\
    &=\iint_{Q_t}  (u\mathbf{b})_x\cdot \mathbf{b}_{xx}\omega^2  dxds-\iint_{Q_t} \mathbf{w}_x\cdot \mathbf{b}_{xx}\omega^2 dxds.
  \end{split}
\end{equation}
To estimate the first integral on left-hand side of \eqref{b111}, we
integrate by parts and use \eqref{e1}$_4$ to obtain
\begin{equation}\label{be1}
\begin{split}
   \iint_{Q_t} \mathbf{b}_t\cdot \mathbf{b}_{xx}\omega^2 dxdt
   &=-\frac{1}{2}\int_\Omega|\mathbf{b}_x|^2\omega^2   dx
   +\frac{1}{2}\int_\Omega|\mathbf{b}_{0x}|^2\omega^2   dx-2\iint_{Q_t} \mathbf{b}_t\cdot \mathbf{b}_{x}\omega\omega' dxdt\\
   &=-\frac{1}{2}\int_\Omega|\mathbf{b}_x|^2\omega^2dx
   +\frac{1}{2}\int_\Omega|\mathbf{b}_{0x}|^2\omega^2dx\\
    &\quad-2\iint_{Q_t} \left(\nu\mathbf{b}_{xx}+\mathbf{w}_x-u\mathbf{b}_x-u_x\mathbf{b}\right)\cdot \mathbf{b}_{x}\omega\omega'  dxds.
  \end{split}
\end{equation}
Below we deal with the third term on right-hand side of
\eqref{be1}. By Cauchy-Schwarz's inequality and Lemmas \ref{2.2} and \ref{2.3}, we have
\begin{equation*}
\begin{split}
    &-2\iint_{Q_t} \left(\nu\mathbf{b}_{xx}+\mathbf{w}_x-u_x\mathbf{b}\right)\cdot \mathbf{b}_{x}\omega\omega'  dxds\\
   &\leq  \frac{\nu}{4}\iint_{Q_t}|\mathbf{b}_{xx}|^2\omega^2dxds  +C\iint_{Q_t}|\mathbf{b}_{x}|^2dxds+C\iint_{Q_t}  |\mathbf{w}_{x}|^2\omega^2  dxds\\
    & \quad+  C\iint_{Q_t} u_x^2dxds+C\iint_{Q_t} |\mathbf{b}\cdot\mathbf{b}_x|^2 dxds\\
    &\leq C+ \frac{\nu}{4}\iint_{Q_t}|\mathbf{b}_{xx}|^2\omega^2dxds  +C\iint_{Q_t}  |\mathbf{w}_{x}|^2\omega^2  dxds.\\
  \end{split}
\end{equation*}
Observe that since from the mean value theorem and $u(1,t)=u(0,t)=0$, we have
\begin{equation}\label{u3}
\begin{split}
|u(x,t)|\leq \|u_x\|_{L^\infty(\Omega)}\omega(x),
\end{split}
\end{equation}
so
\begin{equation*}
\begin{split}
    &2\iint_{Q_t} u |\mathbf{b}_{x}|^2\omega\omega'dxds
    \leq  C\int_{0}^t \|u_x\|_{L^\infty(\Omega)} \int_\Omega|\mathbf{b}_{x}|^2\omega^2dxds.\\
  \end{split}
\end{equation*}
Substituting them into \eqref{be1} yields
\begin{equation}\label{be3}
\begin{split}
     \iint_{Q_t} \mathbf{b}_t\cdot \mathbf{b}_{xx}\omega^2 dxdt  \leq & C-\frac{1}{2}\int_\Omega|\mathbf{b}_x|^2\omega^2dx+\frac{\nu}{4}\iint_{Q_t}|\mathbf{b}_{xx}|^2\omega^2dxds\\
& +C\int_{0}^t\|u_x\|_{L^\infty(\Omega)}\int_\Omega|\mathbf{b}_{x}|^2\omega^2dxds
 +C\iint_{Q_t}  |\mathbf{w}_{x}|^2\omega^2  dxds.
  \end{split}
\end{equation}
As to the two terms on right-hand side of \eqref{b111}, we have by  Young's inequality
\begin{equation}\label{be2}
\begin{split}
   & \iint_{Q_t}  (u\mathbf{b})_x\cdot \mathbf{b}_{xx}\omega^2  dxds-\iint_{Q_t} \mathbf{w}_x\cdot \mathbf{b}_{xx}\omega^2 dxds\\
   &\leq  \frac{\nu}{4}\iint_{Q_t} |\mathbf{b}_{xx}|^2\omega^2  dxds+C\iint_{Q_t}  |(u\mathbf{b})_x|^2\omega^2  dxds
   +C\iint_{Q_t}  |\mathbf{w}_{x}|^2\omega^2  dxds.
  \end{split}
\end{equation}
It remains to treat the second term on right-hand side of \eqref{be2}. We observe   by Lemma \ref{2.2}
\begin{equation}\label{ux}
\begin{split}
     &\int_0^t\|u_x\|_{L^\infty(\Omega)}^2 ds\leq C\iint_{Q_t}|u_xu_{xx}|dxds  \leq C\left(\iint_{Q_t}u_{xx}^2 dxds\right)^{1/2},
  \end{split}
\end{equation}
which together with Lemma 2.1 gives
\begin{equation*}
\begin{split}
    \iint_{Q_t}  |(u\mathbf{b})_x|^2\omega^2  dxds \leq &C\iint_{Q_t} u^2|\mathbf{b}_{x}|^2\omega^2  dxds+C\iint_{Q_t} u_x^2|\mathbf{b} |^2\omega^2  dxds\\
  \leq & C\int_0^t\|u^2\|_{L^\infty(\Omega)}\int_\Omega|\mathbf{b}_{x}|^2\omega^2  dxds+C\left(\iint_{Q_t} u_{xx}^2 dxds\right)^{1/2}.
  \end{split}
\end{equation*}
Substituting it into \eqref{be2} and then, substituting the resulting inequality and \eqref{be3} into \eqref{b111} and using Gronwall's inequality, we finish the proof.
\end{proof}

\begin{lemma}\label{2.8}Let \eqref{kappa} and \eqref{assumption1} hold. Then
\begin{equation}\label{w1}
\begin{split}
   &  \int_\Omega|\mathbf{w}_x|^2\omega^2    dx+\mu \iint_{Q_t} |\mathbf{w}_{xx}|^2\omega^2 dxds\leq C+C\left(\iint_{Q_t} u_{xx}^2 dxds\right)^{1/2},\\
   &\iint_{Q_t}\big(|\mathbf{w}_t|^2+u^2|\mathbf{w}_x|^2\big)dxdt\leq C+C \iint_{Q_t}u_{xx}^2 dxds.
  \end{split}
\end{equation}
\end{lemma}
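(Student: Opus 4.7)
For the first estimate I would mirror the proof of Lemma~\ref{2.7}: rewrite \eqref{e1}$_3$ using the continuity equation as $\rho\mathbf{w}_t-\mu\mathbf{w}_{xx}=\mathbf{b}_x-\rho u\mathbf{w}_x$, multiply by $\mathbf{w}_{xx}\omega^2$, and integrate over $Q_t$. Integration by parts in $x$ on $\iint\rho\mathbf{w}_t\cdot\mathbf{w}_{xx}\omega^2\,dxds$ — whose boundary contributions vanish because $\omega(0)=\omega(1)=0$ — produces the main positive quantity $\tfrac12\int_\Omega\rho\omega^2|\mathbf{w}_x|^2\,dx$, a $\rho_t\omega^2|\mathbf{w}_x|^2$ contribution that is rewritten via $\rho_t=-\rho_xu-\rho u_x$ from continuity, and cross terms of the form $\rho_x\omega^2\mathbf{w}_t\cdot\mathbf{w}_x$ and $\rho\omega\omega'\mathbf{w}_t\cdot\mathbf{w}_x$ in which $\mathbf{w}_t$ is eliminated by substituting from \eqref{w12} (the same device used in Lemma~\ref{2.7} to substitute $\mathbf{b}_t$). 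The resulting pieces fall into three groups: those carrying an extra $\mu\mathbf{w}_{xx}$ factor, absorbed into $\mu\iint\omega^2|\mathbf{w}_{xx}|^2\,dxds$ via Young's inequality; those carrying a $\mathbf{b}_x$ factor, controlled by $\iint|\mathbf{b}_x|^2\,dxds\le C$ from Lemma~\ref{2.2}; and those carrying a $u\mathbf{w}_x$ factor, handled via the pointwise bound \eqref{u3}, $|u|\le\|u_x\|_{L^\infty(\Omega)}\omega$. The remaining source terms $\iint\rho u\mathbf{w}_x\cdot\mathbf{w}_{xx}\omega^2\,dxds$ and $\iint\mathbf{b}_x\cdot\mathbf{w}_{xx}\omega^2\,dxds$ are integrated by parts once more (using $\mathbf{w}_x\cdot\mathbf{w}_{xx}=\tfrac12(|\mathbf{w}_x|^2)_x$ for the first, and moving the derivative from $\mathbf{w}_{xx}$ onto $\mathbf{b}_x$ for the second), which both avoids any $1/\mu$ factor and, in the second, produces a $\mathbf{b}_{xx}\cdot\mathbf{w}_x\omega^2$ term controlled by Lemma~\ref{2.7}. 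A Gronwall argument then closes the estimate.

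The second estimate follows from the first. For the $u^2|\mathbf{w}_x|^2$ component, \eqref{u3} gives $\iint_{Q_t}u^2|\mathbf{w}_x|^2\,dxds\le\int_0^t\|u_x\|_{L^\infty(\Omega)}^2\,ds\cdot\sup_{0<s<t}\int_\Omega\omega^2|\mathbf{w}_x|^2\,dx$; combining with \eqref{ux}, which yields $\int_0^T\|u_x\|_{L^\infty(\Omega)}^2\,ds\le C\bigl(\iint u_{xx}^2\,dxds\bigr)^{1/2}$, and with the first estimate produces a bound of the form $C\bigl(\iint u_{xx}^2\bigr)^{1/2}\bigl(C+C(\iint u_{xx}^2)^{1/2}\bigr)\le C+C\iint u_{xx}^2\,dxds$. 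For $|\mathbf{w}_t|^2$, rewrite \eqref{w12} as $\mathbf{w}_t=\tfrac{\mu}{\rho}\mathbf{w}_{xx}+\tfrac{1}{\rho}\mathbf{b}_x-u\mathbf{w}_x$, so that $|\mathbf{w}_t|^2\le C\mu^2|\mathbf{w}_{xx}|^2+C|\mathbf{b}_x|^2+Cu^2|\mathbf{w}_x|^2$. Integrating and using Lemma~\ref{2.6} ($\mu^2\iint|\mathbf{w}_{xx}|^2\,dxds\le C$) for the first piece, Lemma~\ref{2.2} for the second, and the bound just established for the third completes the argument.

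The main obstacle is in the first estimate. Young's weights must be tuned so that every $|\mathbf{w}_{xx}|^2\omega^2$ contribution carries exactly the $\mu$ weighting required for absorption into the LHS, since a direct Young's splitting of $\mathbf{b}_x\cdot\mathbf{w}_{xx}\omega^2$ or $\rho u\mathbf{w}_x\cdot\mathbf{w}_{xx}\omega^2$ would introduce a fatal $1/\mu$ blowup; this forces one to prefer integration by parts (producing a $(|\mathbf{w}_x|^2)_x$ structure) over pointwise Young's. Simultaneously, the coefficient appearing inside the Gronwall integration must lie in $L^1(0,T)$ unconditionally, which forces arranging the Gronwall coefficient as $\|u_x\|_{L^\infty(\Omega)}$ (bounded in $L^1$ via H\"older and \eqref{u0}) rather than $\|u_x\|_{L^\infty(\Omega)}^2$ (which is in $L^1$ only conditionally on $\iint u_{xx}^2\,dxds$ being already bounded, the very quantity that is to be closed later in Lemma~\ref{2.10}).
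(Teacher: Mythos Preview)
Your proposal follows the same strategy as the paper: multiply by $\mathbf{w}_{xx}\omega^2$, integrate the time-derivative term by parts, substitute $\mathbf{w}_t$ via \eqref{w12} into the resulting $\omega\omega'$ cross terms, use \eqref{u3} to convert every stray $|u|$ into $\|u_x\|_{L^\infty(\Omega)}\omega$, integrate the $\mathbf{b}_x\cdot\mathbf{w}_{xx}\omega^2$ term by parts so that Lemma~\ref{2.7} can absorb the $\mathbf{b}_{xx}$ contribution, and close by Gronwall with $L^1$-in-time coefficient $1+\|u_x\|_{L^\infty(\Omega)}$. Your derivation of the second estimate is identical to the paper's.

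The one substantive difference is that you keep the density factor, working with $\rho\mathbf{w}_t-\mu\mathbf{w}_{xx}=\mathbf{b}_x-\rho u\mathbf{w}_x$, whereas the paper divides through first and uses \eqref{w12} directly. Your choice produces two extra families of terms: those from $\rho_t\omega^2|\mathbf{w}_x|^2$ and those from $\rho_x\omega^2\mathbf{w}_t\cdot\mathbf{w}_x$. After substituting $\rho_t=-(\rho u)_x$ and $\mathbf{w}_t$ from \eqref{w12}, and after integrating the convection source $\iint\rho u\omega^2\mathbf{w}_x\cdot\mathbf{w}_{xx}$ by parts as you indicate, the dangerous contributions of the form $\iint\rho_x u\,\omega^2|\mathbf{w}_x|^2$ appear three times and cancel exactly. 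This cancellation is essential --- such a term does \emph{not} fit your Gronwall structure (since $\rho_x$ is only $L^\infty_tL^2_x$, \eqref{u3} alone cannot reduce it to $\|u_x\|_{L^\infty}\int\omega^2|\mathbf{w}_x|^2$) --- and your sketch, which files it under ``handled via \eqref{u3}'', does not flag it. The residual $\rho_x$ terms, paired with $\mu\mathbf{w}_{xx}$ or with $\mathbf{b}_x$, can be treated by the same $L^\infty$ device the paper uses for $\iint|\mathbf{b}_x|^2\omega^2\rho_x^2$. The paper's divided form simply sidesteps all of this bookkeeping.
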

\begin{proof}
  Multiplying \eqref{w12} by $\mathbf{w}_{xx}\omega^2(x)$ and integrating over $Q_t$, we have
\begin{equation}\label{0v10}
\begin{split}
   &-\iint_{Q_t} \mathbf{w}_t\cdot \mathbf{w}_{xx}\omega^2 dxdt+\mu\iint_{Q_t}|\mathbf{w}_{xx}|^2\frac{\omega^2}{\rho}dxds\\
    &=\iint_{Q_t} u\mathbf{w}_x\cdot \mathbf{w}_{xx}\omega^2 dxds-\iint_{Q_t}  \mathbf{b}_x\cdot \mathbf{w}_{xx}\frac{\omega^2}{\rho}  dxds.
  \end{split}
\end{equation}
Integrating by parts and using \eqref{w12}, we have
\begin{equation*}
\begin{split}
   & \iint_{Q_t} \mathbf{w}_t\cdot \mathbf{w}_{xx}\omega^2 dxdt\\[1mm]
   &=-\frac{1}{2}\int_\Omega|\mathbf{w}_x|^2\omega^2   dx
  +\frac{1}{2}\int_\Omega|\mathbf{w}_{0x}|^2\omega^2   dx-2\iint_{Q_t} \mathbf{w}_t\cdot \mathbf{w}_{x}\omega\omega' dxdt\\[1mm]
   &=-\frac{1}{2}\int_\Omega|\mathbf{w}_x|^2\omega^2  dx
   +\frac{1}{2}\int_\Omega|\mathbf{w}_{0x}|^2\omega^2   dx\\[1mm]
   &\quad-2\iint_{Q_t} \left(\frac{\mu}{\rho}\mathbf{w}_{xx}-u\mathbf{w}_x
   +\frac{\mathbf{b}_x}{\rho}\right)\cdot \mathbf{w}_{x}\omega\omega'  dxds\\[1mm]
   &\leq C-\frac{1}{2}\int_\Omega|\mathbf{w}_x|^2\omega^2   dx+C\mu^2\iint_{Q_t}|\mathbf{w}_{xx}|^2dxds +C\iint_{Q_t}|\mathbf{w}_x|^2\omega^2   dxds\\[1mm]
   &\quad+C\iint_{Q_t}|\mathbf{b}_{x}|^2dxds+C\iint_{Q_t}|u||\mathbf{w}_x|^2\omega dxds.
  \end{split}
\end{equation*}
From \eqref{u3} it follows that
\begin{equation*}
\begin{split}
    \iint_{Q_t}|u||\mathbf{w}_x|^2\omega dxds\leq \int_0^t\|u_x\|_{L^\infty(\Omega)} \int_\Omega|\mathbf{w}_x|^2\omega^2dxds,\\
  \end{split}
\end{equation*}
which together with Lemmas \ref{2.2} and \ref{2.6} gives
\begin{equation*}
\begin{split}
   & \iint_{Q_t} \mathbf{w}_t\cdot \mathbf{w}_{xx}\omega^2 dxdt\\
&   \leq C-\frac{1}{2}\int_\Omega|\mathbf{w}_x|^2\omega^2   dx
   +C\int_0^t\big(1+\|u_x\|_{L^\infty(\Omega)} \big)\int_\Omega|\mathbf{w}_x|^2\omega^2dxds.\\
  \end{split}
\end{equation*}
To estimate the right-hand side of \eqref{0v10}, we have by integrating
by parts  and by \eqref{u3}
\begin{equation*}
\begin{split}
  \iint_{Q_t} u\mathbf{w}_x\cdot \mathbf{w}_{xx}\omega^2 dxds
  =&-\frac{1}{2}\iint_{Q_t} |\mathbf{w}_x|^2[u_x\omega^2+2u\omega\omega']
  dxds\\
  \leq&C\int_0^t \|u_x\|_{L^\infty(\Omega)} \int_\Omega|\mathbf{w}_x|^2\omega^2dxds,\\
  \end{split}
\end{equation*}
and
\begin{equation*}
\begin{split}
   &-\iint_{Q_t} \mathbf{b}_x\cdot \mathbf{w}_{xx}\frac{\omega^2}{\rho} dxds \\[1mm]
   &=\iint_{Q_t}\mathbf{w}_x\cdot \mathbf{b}_{xx}  \frac{\omega^2}{\rho}dxds+2\iint_{Q_t} \mathbf{w}_x\cdot \mathbf{b}_{x} \frac{\omega\omega'}{\rho} dxds-\iint_{Q_t} \mathbf{w}_x\cdot \mathbf{b}_{x}\frac{\omega^2\rho_x}{\rho^2} dxds\\[1mm]
& \leq C\iint_{Q_t}  |\mathbf{b}_{xx}|^2\omega^2 dxds+C\iint_{Q_t}  |\mathbf{w}_x|^2\omega^2 dxds+C\iint_{Q_t}  |\mathbf{b}_{x}|^2dxds\\
&\quad+C\iint_{Q_t}  |\mathbf{b}_x|^2\omega^2 \rho_x^2dxds\\[1mm]
&\leq  C+C\iint_{Q_t}  |\mathbf{w}_x|^2\omega^2 dxds+C\iint_{Q_t}  |\mathbf{b}_{xx}|^2\omega^2 dxds,
  \end{split}
\end{equation*}
where we  used the fact by Lemmas \ref{2.2} and \ref{2.4}
\begin{equation*}
\begin{split}
     \iint_{Q_t}  |\mathbf{b}_x|^2\omega^2 \rho_x^2dxds\leq &C\int_0^t  \left\||\mathbf{b}_x|^2\omega^2\right\|_{L^\infty(\Omega)}  ds\\[1mm]
      \leq & C\iint_{Q_t}  \left|(|\mathbf{b}_x|^2\omega^2)_x\right| dxds\\[1mm]
    \leq & C\iint_{Q_t}    |\mathbf{b}_{x}|^2|\omega\omega'|dxds+C \iint_{Q_t}    |\mathbf{b}_x\cdot\mathbf{b}_{xx}|  \omega^2 dxds\\[1mm]
    \leq &C+C\iint_{Q_t}    |\mathbf{b}_{xx}|^2 \omega^2 dxds.
  \end{split}
\end{equation*}
Substituting the above results into \eqref{0v10} and using Lemma \ref{2.7}, we have
\begin{equation*}
\begin{split}
     &\int_\Omega|\mathbf{w}_x|^2\omega^2   dx+\mu \iint_{Q_t} |\mathbf{w}_{xx}|^2\omega^2   dxds\\
     & \leq C + C \int_0^t\big(1+\|u_x\|_{L^\infty(\Omega)}\big)\int_\Omega|\mathbf{w}_x|^2\omega^2  dxds+C\left(\iint_{Q_t} u_{xx}^2 dxds\right)^{1/2}.
  \end{split}
\end{equation*}
Thus, the first estimate of this lemma follows from Gronwall's inequality and
\eqref{u0}.

Consequently, we have by  \eqref{u3}, the first estimate of this lemma and \eqref{ux}
\begin{equation*}\label{v11}
\begin{split}
    \iint_{Q_T} u^2|\mathbf{w}_x|^2 dxdt&\leq \int_0^T\|u_x\|_{L^\infty(\Omega)}^2\int_\Omega |\mathbf{w}_x|^2\omega^2 dxdt \\ &\leq C\int_0^T\|u_x\|_{L^\infty(\Omega)}^2 dt\left[1+\left(\iint_{Q_T}u_{xx}^2 dxdt\right)^{1/2}\right]\\
    &\leq C+C \iint_{Q_T}u_{xx}^2 dxdt.
  \end{split}
\end{equation*}
Furthermore, by Lemmas \ref{2.2} and \ref{2.6}, we derive from
\eqref{w12} that
\begin{equation*}
\begin{split}
   &\iint_{Q_T} |\mathbf{w}_t|^2dxdt\leq C +C\iint_{Q_T}u^2|\mathbf{w}_x|^2dxdt\leq C+C \iint_{Q_T}u_{xx}^2 dxdt.
  \end{split}
\end{equation*}
The proof is complete.
\end{proof}

\begin{lemma}\label{2.9} Let \eqref{kappa} and \eqref{assumption1} hold. Then
\begin{equation}\label{b99}
\begin{split}
  &  \int_\Omega|\mathbf{b}_{x}|^2dx+\iint_{Q_t} |\mathbf{b}_t|^2
  dxdt
  \leq C+ C\left(\iint_{Q_t}u_{xx}^2 dxds\right)^{1/2}.  \\
  \end{split}
\end{equation}
\end{lemma}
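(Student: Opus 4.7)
The natural test function for equation $\eqref{e1}_4$ is $\mathbf{b}_t$. Multiplying $\eqref{e1}_4$ by $\mathbf{b}_t$ and integrating over $Q_t$, and noting that $\mathbf{b}|_{x=0,1}=0$ forces $\mathbf{b}_t|_{x=0,1}=0$ so the boundary contributions in the integration by parts of $\nu\iint_{Q_t}\mathbf{b}_{xx}\cdot\mathbf{b}_t\,dxds$ vanish, one obtains
\begin{equation*}
\iint_{Q_t}|\mathbf{b}_t|^2\,dxds+\frac{\nu}{2}\int_\Omega|\mathbf{b}_x|^2\,dx=\frac{\nu}{2}\int_\Omega|\mathbf{b}_{0x}|^2\,dx-\iint_{Q_t}(u\mathbf{b})_x\cdot\mathbf{b}_t\,dxds+\iint_{Q_t}\mathbf{w}_x\cdot\mathbf{b}_t\,dxds.
\end{equation*}
I will absorb a fraction of $\iint|\mathbf{b}_t|^2$ on the right into the left via Young's inequality, control the $(u\mathbf{b})_x\cdot\mathbf{b}_t$ piece through Gronwall, and deal with the $\mathbf{w}_x\cdot\mathbf{b}_t$ piece by shifting derivatives.

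Expanding $(u\mathbf{b})_x=u_x\mathbf{b}+u\mathbf{b}_x$ and applying Young's inequality gives, after absorbing $\tfrac14\iint|\mathbf{b}_t|^2$, a remainder of the form $C\iint_{Q_t}\bigl(u_x^2|\mathbf{b}|^2+u^2|\mathbf{b}_x|^2\bigr)dxds$. Since $u$ and $\mathbf{b}$ vanish at $\partial\Omega$, the Poincar\'e-type bounds $\|\mathbf{b}\|_{L^\infty}^2\le C\|\mathbf{b}_x\|_{L^2}^2$ and $\|u\|_{L^\infty}^2\le C\|u_x\|_{L^2}^2$ together with Lemma \ref{2.1} reduce both contributions to $C\int_0^t\|u_x(\cdot,s)\|_{L^2}^2\int_\Omega|\mathbf{b}_x|^2\,dxds$, and $\|u_x\|_{L^2}^2\in L^1(0,T)$ by Lemma \ref{2.2}, which is Gronwall-friendly.

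\textbf{The main obstacle} is the last term $\iint_{Q_t}\mathbf{w}_x\cdot\mathbf{b}_t\,dxds$, because $\mathbf{w}_x$ is only bounded uniformly in $\mu$ with the weight $\sqrt{\mu}$ (Lemma \ref{2.6}), so a direct Cauchy--Schwarz against $\mathbf{b}_t$ is not permissible. The key manoeuvre is a double integration by parts, first in $x$ (the boundary term vanishes since $\mathbf{b}_t|_{\partial\Omega}=0$) and then in $t$, producing
\begin{equation*}
\iint_{Q_t}\mathbf{w}_x\cdot\mathbf{b}_t\,dxds=-\int_\Omega\mathbf{w}(\cdot,t)\cdot\mathbf{b}_x(\cdot,t)\,dx+\int_\Omega\mathbf{w}_0\cdot\mathbf{b}_{0x}\,dx+\iint_{Q_t}\mathbf{w}_t\cdot\mathbf{b}_x\,dxds.
\end{equation*}
The first term is handled by Young's inequality with $\int_\Omega|\mathbf{w}|^2dx\le C$ (Lemma \ref{2.1}), absorbing a small multiple of $\int_\Omega|\mathbf{b}_x|^2dx$ into the left-hand side; the boundary-time term is a constant by \eqref{assumption1}. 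For the third term I apply Cauchy--Schwarz on $Q_t$, then use Lemma \ref{2.8} in the form $\iint|\mathbf{w}_t|^2\le C+C\iint u_{xx}^2$ and Lemma \ref{2.2} for $\iint|\mathbf{b}_x|^2\le C$; crucially, taking the square root produces exactly $C+C\bigl(\iint_{Q_t}u_{xx}^2\,dxds\bigr)^{1/2}$, which is the shape called for in \eqref{b99}.

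Assembling all the pieces and applying Gronwall's inequality to $y(t):=\int_\Omega|\mathbf{b}_x(\cdot,t)|^2dx$ with the $L^1(0,T)$ weight $\|u_x(\cdot,s)\|_{L^2}^2$ yields the asserted bound \eqref{b99}. The whole argument is essentially a careful choice of test function combined with a two-step derivative transfer that is forced on us by the absence of a $\mu$-uniform estimate on $\mathbf{w}_x$ in $L^2(Q_T)$.
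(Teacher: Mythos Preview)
Your proposal is correct and follows essentially the same route as the paper: multiply $\eqref{e1}_4$ by $\mathbf{b}_t$, handle the term $\iint\mathbf{w}_x\cdot\mathbf{b}_t$ by a double integration by parts to convert it to $\iint\mathbf{w}_t\cdot\mathbf{b}_x$ plus boundary-in-time terms, invoke Lemma~\ref{2.8} for $\|\mathbf{w}_t\|_{L^2(Q_t)}$, and close with Gronwall. The only cosmetic differences are that the paper integrates by parts in $t$ first and then in $x$ (arriving at the same identity), and that the paper estimates $\iint u_x^2|\mathbf{b}|^2$ via $\int_0^t\|u_x\|_{L^\infty}^2\,ds\le C\bigl(\iint u_{xx}^2\bigr)^{1/2}$ rather than folding it into the Gronwall weight as you do; both variants are valid.
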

\begin{proof}
Multiplying \eqref{e1}$_4$ by $\mathbf{b}_t$ and
integrating over $Q_t$ yield
\begin{equation}\label{b9}
\begin{split}
  &\frac{\nu}{2}\int_\Omega|\mathbf{b}_{x}|^2dx+ \iint_{Q_t} |\mathbf{b}_t|^2
  dxdt
  =\frac{\nu}{2}\int_\Omega|\mathbf{b}_{0x}|^2dx+\iint_{Q_t}  \big[\mathbf{w}_x
  - (u\mathbf{b})_{x}\big]\cdot\mathbf{b}_t dxdt.  \\
  \end{split}
\end{equation}
Integrating by parts yields
\begin{equation*}
\begin{split}
  &\iint_{Q_t}   \mathbf{w}_x  \cdot\mathbf{b}_t dxdt= \int_\Omega\mathbf{w}_x  \cdot\mathbf{b}dx
  -\int_\Omega\mathbf{w}_{0x}  \cdot\mathbf{b}_0dx-  \iint_{Q_t}   (\mathbf{w}_t)_x  \cdot\mathbf{b}  dxdt\\
  &=-\int_\Omega\mathbf{w}_{0x}\cdot\mathbf{b}_0dx -\int_\Omega\mathbf{w}\cdot\mathbf{b}_xdx
  +\iint_{Q_t}   \mathbf{w}_t\cdot\mathbf{b}_x  dxdt,
  \end{split}
\end{equation*}
which together with  Lemmas \ref{2.1} and \ref{2.2} and
\eqref{w1}$_2$ gives
\begin{equation}\label{b10}
\begin{split}
   \iint_{Q_t}   \mathbf{w}_x  \cdot\mathbf{b}_t dxdt
  &  \leq C+\frac{\nu}{4}\int_\Omega|\mathbf{b}_x|^2dx+\left(\iint_{Q_t}|\mathbf{b}_x|^2dxdt\right)^{1/2}
\left(\iint_{Q_t}|\mathbf{w}_t|^2dxdt\right)^{1/2}\\
&\leq C +\frac{\nu}{4}\int_\Omega|\mathbf{b}_x|^2dx+C\left(\iint_{Q_t}|\mathbf{w}_t|^2dxdt\right)^{1/2}\\
&\leq C +\frac{\nu}{4}\int_\Omega|\mathbf{b}_x|^2dx+C\left(\iint_{Q_t}u_{xx}^2dxds\right)^{1/2}.
  \end{split}
\end{equation}
On the other hand, we have by Cauchy-Schwarz's inequality, Lemma 2.1 and \eqref{ux}
\begin{equation}\label{b11}
\begin{split}
   &-\iint_{Q_t}  (u\mathbf{b})_{x} \cdot\mathbf{b}_t dxdt\\
   & \leq \frac12  \iint_{Q_t} |\mathbf{b}_t|^2 dxdt
  +\frac12\iint_{Q_t}  |(u\mathbf{b})_{x}|^2 dxds\\
   &\leq   \frac12  \iint_{Q_t} |\mathbf{b}_t|^2 dxdt+C\int_0^t\|u^2\|_{L^\infty(\Omega)} \int_{\Omega}  |\mathbf{b}_x|^2 dxds+C\int_0^t\|u_x\|_{L^\infty(\Omega)}^2\int_{\Omega}  |\mathbf{b}|^2 dxds\\
  &\leq   \frac12  \iint_{Q_t} |\mathbf{b}_t|^2 dxdt+C\int_0^t\|u^2\|_{L^\infty(\Omega)} \int_{\Omega}  |\mathbf{b}_x|^2 dxds+C\left(\iint_{Q_t}u_{xx}^2 dxds\right)^{1/2}.
\end{split}\end{equation}
Substituting them into \eqref{b9} and using Gronwall's inequality, we complete the proof.
\end{proof}

Now we can prove the following desired results.

\begin{lemma}\label{2.10}
Let \eqref{kappa} and \eqref{assumption1} hold. Then $\|(u,\mathbf{b})\|_{L^\infty(Q_T)}\leq C$, and
\begin{equation}\label{w1112}
\begin{split}
&\sup\limits_{0<t<T}\int_\Omega(\rho_t^2+u_x^2+\theta^2)dx +\iint_{Q_T} \big( u_t^2 +u_{xx}^2+\kappa\theta_x^2\big)dxdt\leq C,\\
& \sup\limits_{0<t<T}\int_\Omega|\mathbf{w}_x|^2\omega^2 dx+\iint_{Q_T}\big(|u|^2|\mathbf{w}_x|^2+ |\mathbf{w}_{t}|^2 \big)  dxdt \leq C,\\
&\sup\limits_{0<t<T}\int_\Omega|\mathbf{b}_{x}|^2dx+\iint_{Q_T}\big(|\mathbf{b}_t|^2 + |\mathbf{b}_{xx}|^2\omega^2\big) dxdt\leq C.
\end{split}
\end{equation}
 \end{lemma}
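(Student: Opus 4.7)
}
Lemmas \ref{2.7}--\ref{2.9} bound every $\mathbf{w}$- and $\mathbf{b}$-norm that will enter the forthcoming $u$- and $\theta$-energy identities in terms of the single quantity $F(t):=\iint_{Q_t}u_{xx}^2\,dx\,ds$, to powers no larger than $F$ itself. I would therefore couple the energy identities for $u$ and $\theta$ to derive a self-improving inequality $F(t)\leq C(1+F(t)^{1/4}+F(t)^{1/2})$, which forces $F(t)\leq C$, and then read off every remaining bound by back-substitution into Steps A--B and Lemmas \ref{2.7}--\ref{2.9}.

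\emph{Step A (temperature $\times\theta$).} Multiplying $\eqref{e1}_5$ by $\theta$ and using continuity together with $u|_{x=0,1}=\theta_x|_{x=0,1}=0$, I would obtain
\begin{equation*}
\tfrac{1}{2}\int_\Omega\rho\theta^2\,dx+\iint_{Q_t}\kappa\theta_x^2\,dx\,ds=\tfrac{1}{2}\int_\Omega\rho_0\theta_0^2\,dx-\gamma\iint_{Q_t}\rho\theta^2 u_x\,dx\,ds+\iint_{Q_t}\theta\,\mathcal{Q}\,dx\,ds.
\end{equation*}
The drift $-\gamma\iint\rho\theta^2 u_x$ feeds Gronwall through $\int_0^T\|u_x\|_{L^\infty}\,dt\leq C$ (Lemma~\ref{2.5} and H\"older). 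Each piece of $\iint\theta\,\mathcal{Q}$ is to be controlled \emph{without} any $L^\infty(Q_T)$ bound on $\theta$: $\iint\theta u_x^2$ by Cauchy--Schwarz together with $\int_0^T\|\theta\|_{L^\infty}^2\leq C$ (Lemma~\ref{2.2}) and $\iint u_x^2\leq C$, yielding $\leq C M(t)^{1/2}$ where $M(t):=\sup_{s\leq t}\|u_x(s)\|_{L^2}^2$; $\mu\iint\theta|\mathbf{w}_x|^2$ via the pointwise-in-$t$ bound $\mu\int|\mathbf{w}_x|^2\leq C$ (Lemma~\ref{2.6}); and $\nu\iint\theta|\mathbf{b}_x|^2$ via $\int|\mathbf{b}_x|^2\leq C+CF^{1/2}$ from Lemma~\ref{2.9}. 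Gronwall then gives
\begin{equation*}
\sup_{s\leq t}\int_\Omega\rho\theta^2\,dx+\iint_{Q_t}\kappa\theta_x^2\,dx\,ds\leq C\bigl(1+M(t)^{1/2}+F(t)^{1/2}\bigr).
\end{equation*}

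\emph{Step B (momentum $\times u_t$).} The only delicate drift term when multiplying $\eqref{e1}_2$ by $u_t$ is $-\iint\rho u u_x u_t$; the one-dimensional Gagliardo--Nirenberg bound $\|u\|_{L^\infty}^2\leq 2\|u\|_{L^2}\|u_x\|_{L^2}\leq C\|u_x\|_{L^2}$ (using $\int u^2\leq C$ from Lemmas~\ref{2.1}--\ref{2.2}) yields
\begin{equation*}
\iint_{Q_t}u^2u_x^2\,dx\,ds\leq C\int_0^t\|u_x(s)\|_{L^2}^3\,ds\leq C M(t)^{1/2}\iint_{Q_t}u_x^2\,dx\,ds\leq C M(t)^{1/2}.
\end{equation*}
The remaining three drift terms are handled by Lemmas~\ref{2.2}--\ref{2.4} and the Step-A bound on $\iint\theta_x^2$ (obtained via $\kappa\geq\kappa_1$). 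Taking $\sup_{s\leq t}$ on the left-hand side and absorbing $CM^{1/2}$ by Young's inequality yields
\begin{equation*}
M(t)+\iint_{Q_t}u_t^2\,dx\,ds\leq C\bigl(1+F(t)^{1/2}\bigr).
\end{equation*}

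\emph{Step C (closing $F$ and concluding).} Solving $\eqref{e1}_2$ for $\lambda u_{xx}=\rho u_t+\rho u u_x+\gamma(\rho\theta)_x+\mathbf{b}\cdot\mathbf{b}_x$ and squaring,
\begin{equation*}
F(t)\leq C\iint_{Q_t}\bigl(u_t^2+u^2u_x^2+\theta^2\rho_x^2+\theta_x^2+|\mathbf{b}|^2|\mathbf{b}_x|^2\bigr)\,dx\,ds.
\end{equation*}
Steps~A--B and Lemmas~\ref{2.2}--\ref{2.4} bound the five terms by $C(1+F^{1/2})$, $C(1+F^{1/4})$, $C$, $C(1+F^{1/2})$, and $C$, respectively; Young's inequality then forces $F(t)\leq C$. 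Back-substitution produces every bound in \eqref{w1112}. For $\rho_t$ I would use $\rho_t=-u\rho_x-\rho u_x$ together with $\|u\|_{L^\infty},\|u_x\|_{L^2}\leq C$; for the $L^\infty$ bounds on $u$ and $\mathbf{b}$ I would use Poincar\'e, since both vanish at the boundary. The hardest part is the triangular coupling $M\leftrightarrow F\leftrightarrow\iint\theta_x^2$: every right-hand side must be expressed in powers $F^\beta$ with $\beta<1$ so that Young can close the loop, and this must be done without any $L^\infty(Q_T)$ bound on $\theta$ (established only later in Lemma~\ref{2.14}); it is the Gagliardo--Nirenberg bound $\|u\|_{L^\infty}^2\leq C\|u_x\|_{L^2}$ (rather than the weaker Poincar\'e bound $\leq\|u_x\|_{L^2}^2$) that makes the convective term $\rho u u_x u_t$ manageable.
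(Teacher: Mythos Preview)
Your proposal is correct and follows essentially the same route as the paper, with only a minor reorganization. The paper squares the momentum equation (obtaining $\int u_x^2+\iint(\rho u_t^2+\lambda^2\rho^{-1}u_{xx}^2)$ on the left at once), multiplies the temperature equation by $\theta$, substitutes the resulting bound $\int\theta^2+\iint\kappa\theta_x^2\leq C+C\int_0^t(\|\theta\|_{L^\infty}+\|\theta\|_{L^\infty}^2)\int_\Omega u_x^2+C\bigl(\iint u_{xx}^2\bigr)^{1/2}$ back into the momentum estimate, absorbs the $(\iint u_{xx}^2)^{1/2}$ term by Young, and then Gronwalls $\int_\Omega u_x^2$ directly with weight $\|u^2\|_{L^\infty}+\|\theta\|_{L^\infty}+\|\theta\|_{L^\infty}^2\in L^1(0,T)$. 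You instead keep the temperature and momentum estimates separate, introduce the auxiliary quantities $M(t)$ and $F(t)$, and close via the algebraic inequality $F\leq C(1+F^{1/4}+F^{1/2})$; the two arguments use the same ingredients (Lemmas~\ref{2.2}--\ref{2.9}, the $L^1_t$ bound on $\|u_x\|_{L^\infty}$, and the $L^2_t$ bound on $\|\theta\|_{L^\infty}$) and are interchangeable.
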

 \begin{proof}
Rewrite the equation \eqref{e1}$_2$ in the form
 \begin{equation*}\label{0u0}
\begin{split}
&\sqrt{\rho}u_t-\frac{\lambda}{\sqrt{\rho}}u_{xx}=-\sqrt{\rho}uu_x-
\gamma\sqrt{\rho}\theta_x-\frac{\gamma}{\sqrt{\rho}}\rho_x\theta-\frac{1}{\sqrt{\rho}}\mathbf{b}\cdot\mathbf{b}_x.
\end{split}
\end{equation*}
Using Cauchy-Schwarz's inequality, we obtain
\begin{equation}\label{0u1}
\begin{split}
&  \frac{\lambda}2\int_\Omega u_x^2dx+\iint_{Q_t}\big(\rho u_t^2+\lambda^2\rho^{-1}u_{xx}^2\big)dxdt\\
&\leq \frac{\lambda}2\int_\Omega u_{0x}^2dx+C\iint_{Q_t} \big(u^2u_x^2+  \theta_x^2+\rho_x^2 \theta^2 + |\mathbf{b}\cdot\mathbf{b}_x|^2\big)dxds\\
&\leq C+C\int_0^t\|u^2\|_{L^\infty(\Omega)}\int_\Omega u_x^2dxds+C\iint_{Q_t} \theta_x^2dxds+C\int_0^t\|\theta^2\|_{L^\infty(\Omega)}\int_{\Omega} \rho_x^2  dxds\\
&\leq C+C\int_0^t\|u^2\|_{L^\infty(\Omega)}\int_\Omega u_x^2dxds+C\iint_{Q_t} \big(\theta_x^2+ \theta^2 \big) dxds,
\end{split}
\end{equation}
where we used  \eqref{rho} and $\int_0^t\|\theta^2\|_{L^\infty(\Omega)}ds\leq C \iint_{Q_t} (\theta^2+\theta_x^2)dxds$.

Our next step is to multiply \eqref{e1}$_5$   by $ \theta $ and
integrate over $Q_t$. We have
\begin{equation}\label{theta0}
\begin{split}
   &\frac12\int_\Omega \rho\theta^2dx+ \iint_{Q_t}  \kappa \theta_x^2dxds = -\iint_{Q_t}pu_x\theta dxds+\iint_{Q_t}  \theta  {\cal Q}   dxds.
  \end{split}
\end{equation}
By Cauchy-Schwarz's inequality and \eqref{theta00}, we obtain
\begin{equation*}
\begin{split}
     -\gamma\iint_{Q_t}  \rho\theta^2 u_xdxds
 &\leq C \iint_{Q_t} \theta^2dxds+C\iint_{Q_t}\theta^{2} u_x^2 dxds\\
&\leq C+C\int_0^t\|\theta\|_{L^\infty(\Omega)}^{2} \int_{\Omega}u_x^2
dxds.
  \end{split}
\end{equation*}
On the other hand, we have by Lemmas \ref{2.2}, \ref{2.6} and \ref{2.9}
\begin{equation*}
\begin{split}
     \iint_{Q_t}  \theta  {\cal Q} dxds
    &\leq  C\int_0^t\|\theta\|_{L^\infty(\Omega)} \left
    \{\int_{\Omega}\big(u_x^2 +\mu|\mathbf{w}_x|^2dx+|\mathbf{b}_x|^2\big)dx\right\}ds \\
    & \leq  C+C\int_0^t\|\theta\|_{L^\infty(\Omega)}\int_{\Omega}u_x^2
    dxds
    +C\left(\iint_{Q_t}u_{xx}^2 dxds\right)^{1/2}.
  \end{split}
\end{equation*}
Inserting them into \eqref{theta0} yields
\begin{equation}\label{theta33}
\begin{aligned}
     &\int_\Omega  \theta^2dx+ \iint_{Q_t}  \kappa \theta_x^2dxds\\
      &\leq  C +C\int_0^t\left[\|\theta\|_{L^\infty(\Omega)}+\|\theta\|_{L^\infty(\Omega)}^{2}\right]\int_{\Omega}u_x^2  dxds +C\left(\iint_{Q_t}u_{xx}^2 dxds\right)^{1/2}.
  \end{aligned}
\end{equation}
Plugging it into \eqref{0u1} gives
\begin{equation*}\label{0u2}
\begin{split}
&  \int_\Omega u_x^2dx+\iint_{Q_t} \big(u_t^2+ u_{xx}^2\big)dxdt\\
&\leq  C +C\int_0^t\left[\|u^2\|_{L^\infty(\Omega)}+\|\theta\|_{L^\infty(\Omega)}+\|\theta\|_{L^\infty(\Omega)}^{2}\right]
\int_{\Omega}u_x^2  dxds +C\left(\iint_{Q_t}u_{xx}^2 dxds\right)^{1/2}\\
 &\leq  C +C\int_0^t\left[\|u^2\|_{L^\infty(\Omega)}+\|\theta\|_{L^\infty(\Omega)}+\|\theta\|_{L^\infty(\Omega)}^{2}\right]
 \int_{\Omega}u_x^2  dxds+\frac12\iint_{Q_t}u_{xx}^2 dxds.\\
\end{split}
\end{equation*}
By Gronwall's inequality and noticing \eqref{theta00}, we have
\begin{equation*}
\begin{split}
  \int_\Omega u_x^2dx+\iint_{Q_t} \big(u_t^2 + u_{xx}^2\big)dxdt   \leq  C.
\end{split}
\end{equation*}
Consequently,  \eqref{w1112} follows  from \eqref{theta33} and Lemmas 2.7-2.9 .
  The proof is complete.
\end{proof}

As a consequence of Lemma 2.10, we have
\begin{equation}\label{ux4}
\begin{split}
  \iint_{Q_T} u_x^6dxdt\leq& C\int_0^T\|u_x\|_{L^\infty(\Omega)}^4\int_\Omega u_x^2dx dt\leq  C\int_0^T\|u_x\|_{L^\infty(\Omega)}^4 dt\\
    \leq&  C\int_0^T\left(\int_\Omega|u_x||u_{xx}|dx\right)^2 dt\leq
    C\int_0^T\left(\int_\Omega u_x^2dx\right)\left(\int_\Omega u_{xx}^2dx\right)dt\\
    \leq&  C.
\end{split}
\end{equation}

 \begin{lemma}\label{2.11}
  Let \eqref{kappa} and \eqref{assumption1} hold. Then $\|\mathbf{w}\|_{L^\infty(Q_T)}\leq C$. Moreover, it holds
\begin{equation*}\begin{split}
 \sup\limits_{0<t<T}\int_\Omega |\mathbf{w}_x | dx \leq C.
\end{split}\end{equation*}
\end{lemma}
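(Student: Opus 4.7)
The plan is to reduce both claims to the single BV-in-$x$ estimate $\sup_{0<t<T}\int_\Omega|\mathbf{w}_x|\,dx\leq C$; the pointwise bound $\|\mathbf{w}\|_{L^\infty(Q_T)}\leq C$ will follow at once from $|\mathbf{w}(x,t)|\leq|\mathbf{w}^-(t)|+\int_0^x|\mathbf{w}_y(y,t)|\,dy$ together with $\mathbf{w}^\pm\in C^1([0,T])$. The natural starting point is to solve \eqref{e1}$_4$ pointwise for $\mathbf{w}_x$, giving
\[
\mathbf{w}_x=\mathbf{b}_t+u_x\mathbf{b}+u\mathbf{b}_x-\nu\mathbf{b}_{xx};
\]
Lemmas \ref{2.1} and \ref{2.10} immediately bound the two middle terms in $L^\infty(0,T;L^1(\Omega))$ by $\|u_x\|_{L^2}\|\mathbf{b}\|_{L^2}+\|u\|_{L^\infty}\|\mathbf{b}_x\|_{L^2}\leq C$, so only the $\mathbf{b}_t$ and $\nu\mathbf{b}_{xx}$ pieces require further work.

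Because the available bounds on $\mathbf{b}_t$ and $\omega\mathbf{b}_{xx}$ from Lemmas \ref{2.9}--\ref{2.10} are only time-integrated, I would set up a Gronwall-type inequality for $\Phi_\varepsilon(t):=\int\rho\sqrt{|\mathbf{w}_x|^2+\varepsilon^2}\,dx$. Differentiating \eqref{e1}$_3$ in $x$ yields a linear parabolic equation for $\mathbf{z}=\mathbf{w}_x$; multiplying it by the smoothed sign $\mathbf{z}/\sqrt{|\mathbf{z}|^2+\varepsilon^2}$ and integrating over $\Omega$, the convective terms combine with $\rho_t+(\rho u)_x=0$ to give $\Phi_\varepsilon'(t)$, while integration by parts of the diffusion produces a nonnegative interior integral (discarded by convexity) together with boundary contributions $\mu\mathbf{w}_{xx}\cdot\mathbf{z}/|\mathbf{z}|_\varepsilon\bigl|_{x=0,1}$. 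Evaluating \eqref{e1}$_3$ itself at $x=0,1$ (where $u=0$) rewrites these boundary values as $\rho(j,t)(\mathbf{w}^\pm)'(t)-\mathbf{b}_x(j,t)$; the $\mathbf{b}_{xx}$-source term is similarly handled by a further integration by parts, producing two more copies of $|\mathbf{b}_x(j,t)|$ plus an interior piece absorbed into the discarded convexity integral via Cauchy--Schwarz. The remaining lower-order terms $\int|\rho_x||\mathbf{w}_t|$, $\int\rho|u_x||\mathbf{w}_x|$ and $\int|\rho_xu||\mathbf{w}_x|$ decompose into $L^1(0,T)$ sources and $L^1(0,T)$ coefficients multiplying $\Phi_\varepsilon(t)$, thanks to Lemmas \ref{2.4}, \ref{2.5}, \ref{2.8} and \ref{2.10}.

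The main obstacle is to prove that the boundary traces $|\mathbf{b}_x(0,t)|$ and $|\mathbf{b}_x(1,t)|$ lie in $L^1(0,T)$ uniformly in $\mu$, which is what is needed to time-integrate the differential inequality just set up. I plan to handle this through the one-dimensional trace estimate $|\mathbf{b}_x(j,t)|^2\leq C\|\mathbf{b}_x(\cdot,t)\|_{L^2}^2+C\|\mathbf{b}_x(\cdot,t)\|_{L^2}\|\mathbf{b}_{xx}(\cdot,t)\|_{L^2}$, combined with the identity $\nu\mathbf{b}_{xx}=\mathbf{b}_t+(u\mathbf{b})_x-\mathbf{w}_x$ which expresses $\mathbf{b}_{xx}$ through the $L^2(Q_T)$-controlled quantities $\mathbf{b}_t$, $(u\mathbf{b})_x$ and the residual $\mathbf{w}_x$; the latter contribution is absorbed into the discarded convexity integral coming from the diffusion term in the Gronwall step. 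Applying Gronwall's inequality to the resulting inequality and then sending $\varepsilon\to 0$, together with the lower bound $\rho\geq C^{-1}$ from Lemma \ref{2.2}, yields $\sup_t\int|\mathbf{w}_x|\,dx\leq C$, which by the opening reduction completes the proof.
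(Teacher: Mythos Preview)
Your overall plan---differentiate the $\mathbf{w}$-equation, test against a regularized sign $\nabla_\xi\Phi_\varepsilon(\mathbf{w}_x)$, discard the diffusion convexity term, control boundary contributions, and close by Gronwall---is precisely the paper's structure. The difficulty is that two of your steps do not deliver $\mu$-uniform bounds.

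For the interior forcing $(\mathbf{b}_x/\rho)_x$, your integration by parts leaves $-\int\rho^{-1}\mathbf{b}_x\cdot D^2_\xi\Phi_\varepsilon(\mathbf{z})\mathbf{z}_x\,dx$. Since the good diffusion term carries the prefactor $\mu$, namely $\mu\int\rho^{-1}\mathbf{z}_xD^2\Phi_\varepsilon\mathbf{z}_x^{\top}\,dx$, absorbing via Cauchy--Schwarz forces a remainder $\tfrac{C}{\mu}\int\rho^{-1}\mathbf{b}_xD^2\Phi_\varepsilon\mathbf{b}_x^{\top}\,dx\leq\tfrac{C}{\mu}\int|\mathbf{b}_x|^2/\Phi_\varepsilon(\mathbf{z})\,dx$, which both carries $1/\mu$ and degenerates as $\varepsilon\to 0$. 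The paper avoids this by substituting $\nu\mathbf{b}_{xx}=\mathbf{b}_t+(u\mathbf{b})_x-\mathbf{w}_x$ \emph{before} any integration by parts: the crucial point is that the $-\mathbf{w}_x$ piece becomes $-(\nu\rho)^{-1}\mathbf{z}\cdot\nabla_\xi\Phi_\varepsilon(\mathbf{z})\leq 0$, a favorable sign to be dropped, while $|\mathbf{b}_t|+|(u\mathbf{b})_x|+|\rho_x||\mathbf{b}_x|$ lie in $L^1(Q_T)$ uniformly in $\mu$ by Lemmas~\ref{2.4} and~\ref{2.10}.

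For the boundary trace, your estimate $|\mathbf{b}_x(j,t)|^2\leq C\|\mathbf{b}_x\|_{L^2}^2+C\|\mathbf{b}_x\|_{L^2}\|\mathbf{b}_{xx}\|_{L^2}$ together with $\nu\mathbf{b}_{xx}=\mathbf{b}_t+(u\mathbf{b})_x-\mathbf{w}_x$ produces a contribution $C\|\mathbf{b}_x\|_{L^2}^{1/2}\|\mathbf{w}_x\|_{L^2}^{1/2}$ whose time integral is only $O(\mu^{-1/4})$ via $\mu\iint|\mathbf{w}_x|^2\leq C$; the discarded convexity integral involves $\mathbf{w}_{xx}$, not $\mathbf{w}_x$, so there is nothing to absorb this into. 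The paper bypasses the trace inequality altogether: integrating \eqref{e1}$_4$ in $x$ from the endpoint $a\in\{0,1\}$ to $y$ and then averaging over $y\in\Omega$ expresses $\mathbf{b}_x(a,t)$ directly through $\int_\Omega\!\int_a^y\mathbf{b}_t\,dxdy$, $\int_\Omega(u\mathbf{b}-\mathbf{w})\,dy$, and $\mathbf{w}(a,t)$, all of which are bounded in $L^2(0,T)$ uniformly in $\mu$ by Lemmas~\ref{2.1} and~\ref{2.10}.
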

 \begin{proof}
  Set $\mathbf{z}=\mathbf{w}_x$. Differentiating \eqref{w12}  in $x$ gives
\begin{equation}\label{zx}
\begin{split}
\mathbf{z}_t=\left(\frac{\mu}{\rho}\mathbf{z}_x\right)_x-(u\mathbf{z})_x+\left(\frac{\mathbf{b}_x}{\rho}\right)_x.
\end{split}\end{equation}
Denote $\Phi_\epsilon(\cdot):\mathbb{R}^2\rightarrow \mathbb{R}^+ $ for $\epsilon \in (0, 1)$  by
\begin{equation*}\label{phiepsilon}
\Phi_\epsilon(\xi)=\sqrt{\epsilon^2+|\xi|^2},\quad\forall\xi\in\mathbb{R}^2.
\end{equation*}
Observe that $\Phi_\epsilon$ has the properties
 \begin{equation}\label{phii}
\left\{\begin{split}
  &|\xi|\leq |\Phi_\epsilon(\xi)|\leq |\xi|+\epsilon,\quad\forall\xi\in\mathbb{R}^2,\\
  &|\nabla_\xi\Phi_\epsilon(\xi)|\leq 1,\quad\forall\xi\in\mathbb{R}^2,\\
  &   0\leq \xi\cdot\nabla_\xi\Phi_\epsilon(\xi)\leq \Phi_\epsilon(\xi),\quad\forall\xi\in\mathbb{R}^2,\\
  &\eta D_\xi^2\Phi_\epsilon(\xi)\eta^{\top}\geq 0,\quad \forall\xi, \eta\in\mathbb{R}^2,\\
  &\lim\limits_{\epsilon\rightarrow 0^+}\Phi_\epsilon(\xi)=|\xi|,\quad\forall\xi\in\mathbb{R}^2,
\end{split}
\right.
\end{equation}
where $\xi^{\top}$ stands for the transpose of the vector
$\xi=(\xi_1,\xi_2) \in \mathbb{R}^2$, and $D_\xi^2 g$ is the
Hessian matrix of the function $g :\mathbb{R}^2\rightarrow\mathbb{R}$ which is defined by
\begin{equation*}
D^2_\xi g(\xi)=\left(\begin{split}
  &g_{\xi_1\xi_1}&&g_{\xi_1\xi_2}\\
  &g_{\xi_2\xi_1}&&g_{\xi_2\xi_2}
\end{split}
\right).
\end{equation*}
Multiplying \eqref{zx} by $\nabla_\xi\Phi_\epsilon(\mathbf{z})$ and integrating over $Q_t$, we have
\begin{equation}\label{wx1}
\begin{split}
&\int_\Omega \Phi_\epsilon(\mathbf{z})dx-\int_\Omega \Phi_\epsilon(\mathbf{w}_{0x})dx\\
&=-\mu\iint_{Q_t}\frac{1}{\rho} \mathbf{z}_x D_\xi^2\Phi_\epsilon(\mathbf{z})(\mathbf{z}_x)^\perp dxds
-\iint_{Q_t}(u\mathbf{z})_x\cdot\nabla_\xi\Phi_\epsilon(\mathbf{z})  dxds\\
&\quad+\iint_{Q_t}\left(\frac{\mathbf{b}_x}{\rho}\right)_x\cdot\nabla_\xi\Phi_\epsilon(\mathbf{z})  dxds
+\mu\int_0^t\frac{\mathbf{z}_x\cdot\nabla_\xi\Phi_\epsilon(\mathbf{z}) }{\rho}\bigg|_{x=0}^{x=1}ds =:\sum_{j=1}^4E_j.
\end{split}
\end{equation}
From \eqref{phii}$_4$  it follows that
$$
E_1\leq 0.
$$
To estimate $E_2$, we observe by \eqref{phii}$_3$
\begin{equation*}
\begin{split}
E_2=& -\iint_{Q_t}\big(u\mathbf{z}_x+u_x \mathbf{z}\big)\cdot\nabla_\xi\Phi_\epsilon(\mathbf{z})  dxds\\
=& \iint_{Q_t}\big(u_x\Phi_\epsilon(\mathbf{z})-u_x \mathbf{z}\cdot\nabla_\xi\Phi_\epsilon(\mathbf{z}) \big) dxds\\
\leq&  C\int_0^t\|u_x\|_{L^\infty(\Omega)} \int_\Omega\Phi_\epsilon(\mathbf{z})dxds.
\end{split}
\end{equation*}
As to $E_3$, utilizing the equation \eqref{e1}$_4$ yields
\begin{equation*}
\begin{split}
 E_3 =&\iint_{Q_t}\frac{\mathbf{b}_{xx}\cdot\nabla_\xi\Phi_\epsilon(\mathbf{z})}{\rho}dxds
 -\iint_{Q_t}\frac{\mathbf{b}_{x}\cdot\nabla_\xi\Phi_\epsilon(\mathbf{z})}{\rho^2}\rho_xdxds\\[1mm]
 =&\frac{1}{\nu}\iint_{Q_t} \frac{\big[\mathbf{b}_{t}+(u\mathbf{b})_x-\mathbf{z}\big]\cdot\nabla_\xi\Phi_\epsilon(\mathbf{z})}{\rho} dxds -\iint_{Q_t}\frac{\mathbf{b}_{x}\cdot\nabla_\xi\Phi_\epsilon(\mathbf{z}) }{\rho^2}\rho_xdxds\\[1mm]
   \leq&C \iint_{Q_t}  \big[|\mathbf{b}_t|+|(u\mathbf{b})_x|+|\rho_x||\mathbf{b}_x|\big]dxds
 \leq C,
\end{split}
\end{equation*}
where we used \eqref{phii}$_2$-\eqref{phii}$_3$ and  Lemmas \ref{2.4} and \ref{2.10}.

It remains to estimate $E_4$. From \eqref{w12}, we have
\begin{equation}\label{wt}
\begin{split}
  \left|\frac{\mu}{\rho(a,t)}\mathbf{z}_x(a,t)\right|
  =\left|\mathbf{w}_t(a,t)-\frac{\mathbf{b}_x(a,t)}{\rho(a,t)}\right|
  \leq C+C |\mathbf{b}_x(a,t)|,\quad \hbox{\rm where}~a=0~\hbox{\rm
  or}~a=1.
\end{split}
\end{equation}
On the other hand, we first integrate \eqref{e1}$_4$ from $a$ to $y
\in [0, 1]$ in $x$, and then integrate the resulting equation over
$(0, 1)$ in $y$, so that
 \begin{equation*}\begin{split}
\mathbf{b}_x(a,t)=&-\frac{1}{\nu}\left\{\int_0^1\hspace{-2mm}\int_a^y
\mathbf{b}_t(x,t)
dxdy+\int_0^1(u\mathbf{b}-\mathbf{w})(y,t)dy+\mathbf{w}(a,t)\right\},
\end{split}\end{equation*}
so it follows from Lemmas \ref{2.1} and \ref{2.10} that
\begin{equation*}\begin{split}
\int_0^T|\mathbf{b}_x(a,t)|^2dt  \leq C.
\end{split}\end{equation*}
Thus one derives from \eqref{wt} that
\begin{equation*}
\begin{split}
  \int_0^T\left|\frac{\mu}{\rho(a,t)}\mathbf{z}_x(a,t)\right|dt \leq C+C\int_0^T|\mathbf{b}_x(a,t)| dt\leq C,
\end{split}
\end{equation*}
therefore
\begin{equation*}
\begin{split}
  E_4\leq C\int_0^T\left\{\left|\frac{\mu}{\rho(1,t)}\mathbf{z}_x(1,t)\right|
  +\left|\frac{\mu}{\rho(0,t)}\mathbf{z}_x(0,t)\right|\right\}dt  \leq C.
\end{split}
\end{equation*}
Substituting the above results in \eqref{wx1} and utilizing
Gronwall's inequality, we get
$$\int_\Omega \Phi_\epsilon(\mathbf{z})dx\leq C+\int_\Omega \Phi_\epsilon(\mathbf{w}_{0x})dx.
$$
Passing to the limit as $\epsilon\rightarrow 0$ yields
$$\int_\Omega |\mathbf{w}_x|dx\leq C.
$$
This and $\int_\Omega |\mathbf{w}|^2dx\leq C$  imply that
$|\mathbf{w}|\leq C$. The proof is complete.
 \end{proof}

\begin{lemma}\label{2.12}\label{upperbound}Let \eqref{kappa} and \eqref{assumption1} hold. Then
\begin{equation*}\begin{split}
 \int_0^T\|\mathbf{b}_x\|_{L^\infty(\Omega)}^2dt \leq C.
\end{split}\end{equation*}
\end{lemma}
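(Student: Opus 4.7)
The plan is to obtain the $L^\infty_x$ control of $\mathbf{b}_x$ directly from the magnetic field equation $\eqref{e1}_4$ integrated once in $x$, and then square-integrate in $t$ using the $L^2(Q_T)$-bound on $\mathbf{b}_t$ from Lemma \ref{2.10} together with the boundary estimate $\int_0^T |\mathbf{b}_x(a,t)|^2 dt\leq C$ that was already produced in the proof of Lemma \ref{2.11}.

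Concretely, I would rewrite $\eqref{e1}_4$ as $\nu \mathbf{b}_{xx}=\mathbf{b}_t+(u\mathbf{b})_x-\mathbf{w}_x$ and integrate from $0$ to $x$. Using the boundary conditions $u(0,t)=0$, $\mathbf{b}(0,t)=\mathbf{0}$, $\mathbf{w}(0,t)=\mathbf{w}^-(t)$, this yields
\begin{equation*}
\nu\,\mathbf{b}_x(x,t)=\nu\,\mathbf{b}_x(0,t)+\int_0^x\mathbf{b}_t(y,t)\,dy+u(x,t)\mathbf{b}(x,t)-\mathbf{w}(x,t)+\mathbf{w}^-(t).
\end{equation*}
Taking the $L^\infty_x$-norm and invoking the uniform $L^\infty(Q_T)$-bounds on $u$, $\mathbf{b}$, $\mathbf{w}$ (Lemmas \ref{2.10}, \ref{2.11}) together with $\|\mathbf{w}^\pm\|_{C^1([0,T])}\le C$ from \eqref{assumption1}, one obtains
\begin{equation*}
\nu\,\|\mathbf{b}_x(\cdot,t)\|_{L^\infty(\Omega)}\leq \nu\,|\mathbf{b}_x(0,t)|+\|\mathbf{b}_t(\cdot,t)\|_{L^1(\Omega)}+C.
\end{equation*}

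Now I square and integrate in $t$; since $|\Omega|=1$, one has $\|\mathbf{b}_t(\cdot,t)\|_{L^1(\Omega)}^2\leq \|\mathbf{b}_t(\cdot,t)\|_{L^2(\Omega)}^2$, so
\begin{equation*}
\int_0^T\|\mathbf{b}_x\|_{L^\infty(\Omega)}^2\,dt\leq C+C\int_0^T|\mathbf{b}_x(0,t)|^2\,dt+C\iint_{Q_T}|\mathbf{b}_t|^2\,dx\,dt.
\end{equation*}
The last integral is finite by \eqref{w1112}$_3$ in Lemma \ref{2.10}. The boundary term $\int_0^T|\mathbf{b}_x(0,t)|^2\,dt\leq C$ is exactly what was derived en route in the proof of Lemma \ref{2.11}, by integrating $\eqref{e1}_4$ twice and invoking Lemmas \ref{2.1} and \ref{2.10}. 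Combining these yields the claim.

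No serious analytical obstacle appears here; the argument is essentially a one-line use of the equation, and the only subtle point is making sure that all three pieces (the boundary value, the time derivative in $L^2(Q_T)$, and the uniform $L^\infty$-bound of $\mathbf{w}$) have already been established uniformly in $\mu$ in the preceding lemmas, which they have.
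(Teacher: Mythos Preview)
Your argument is correct and is essentially the same idea as the paper's: integrate $\eqref{e1}_4$ once in $x$ to express $\mathbf{b}_x$ in terms of $\mathbf{b}_t$, $u\mathbf{b}$, and $\mathbf{w}$, then square and integrate in $t$ using Lemma~\ref{2.10} and Lemma~\ref{2.11}. The only cosmetic difference is in how the integration constant is handled: you integrate from $x=0$ and invoke the boundary bound $\int_0^T|\mathbf{b}_x(0,t)|^2\,dt\le C$ already obtained inside the proof of Lemma~\ref{2.11}, whereas the paper integrates from a generic point $z$ to $y$ and then averages over $y\in(0,1)$, which eliminates the need to refer back to that boundary estimate and makes the proof self-contained. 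Both routes give the result with the same dependencies.
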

 \begin{proof}
 For any fixed $ z \in [0, 1]$, we first integrate \eqref{e1}$_4$ from $z$ to $y \in [0, 1]$ in $x$, and then integrate the resulting equation over $(0, 1)$ in $y$, so that
 \begin{equation*}\begin{split}
\mathbf{b}_x(z,t)=&-\frac{1}{\nu}\left\{\int_0^1\hspace{-2mm}\int_z^y
\mathbf{b}_t(x,t)
dxdy+\int_0^1(u\mathbf{b}-\mathbf{w})(y,t)dy-(u\mathbf{b}-\mathbf{w})(z,t)\right\},
\end{split}\end{equation*}
  which together with Lemmas \ref{2.10} and \ref{2.11}  implies the desired result. The proof is complete.

 \end{proof}

Combining Lemmas \ref{2.10}-\ref{2.12}, we have
 \begin{equation}\label{bxxbound}\begin{split}
  \iint_{Q_T}|\mathbf{b}_x||\mathbf{b}_{xx}|dxdt
 &=\frac{1}{\nu}\iint_{Q_T}|\mathbf{b}_x||\mathbf{b}_{t}+(u\mathbf{b})_x-\mathbf{w}_x|dxdt\\
  &\leq C+C\int_0^T\|\mathbf{b}_x\|_{L^\infty(\Omega)}\int_\Omega|\mathbf{w}_x|dxdt \leq
 C,
\end{split}\end{equation}
and
\begin{equation}\label{bxbound}
\begin{split}
\iint_{Q_T}|\mathbf{b}_x |^4dxdt\leq C\int_0^T\|\mathbf{b}_x\|_{L^\infty(\Omega)}^2\int_\Omega|\mathbf{b}_x|^2dxdt\leq   C.
\end{split}\end{equation}

Now some results in Lemmas \ref{2.6} and \ref{2.10} can be improved
as follows.
 \begin{lemma}\label{2.13}Let \eqref{kappa} and \eqref{assumption1} hold. Then
\begin{equation*}\label{we1}
\begin{split}
&
\sqrt{\mu}\sup\limits_{0<t<T}\int_\Omega|\mathbf{w}_x|^2dx+\mu^{3/2}
\iint_{Q_T}
 |\mathbf{w}_{xx}|^2 dxdt \leq  C,\\
  & \sup\limits_{0<t<T}\int_\Omega|\mathbf{w}_x|^2\omega  dx+\iint_{Q_T} \big(\mu |\mathbf{w}_{xx}|^2+|\mathbf{b}_{xx}|^2\big)
  \omega dxdt\leq C.
\end{split}
\end{equation*}
 \end{lemma}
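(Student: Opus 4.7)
The plan is to strengthen the multiplier arguments of Lemmas \ref{2.6} and \ref{2.8} by choosing weights tailored to the target powers of $\mu$, exploiting the $\mu$-uniform bounds on $\mathbf{b}_t,\mathbf{b}_x,u_{xx}$, on $\mathbf{w}_x\in L^\infty(0,T;L^1)$ and on $\mathbf{b}_x\in L^2(0,T;L^\infty)$ now available from Lemmas \ref{2.10}--\ref{2.12} together with \eqref{ux4}--\eqref{bxbound}.

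For the first inequality I would multiply \eqref{w12} by $\sqrt{\mu}\,\mathbf{w}_{xx}$ and integrate over $Q_t$. Integration by parts in $x$ produces the left-hand side $\tfrac{\sqrt{\mu}}{2}\int|\mathbf{w}_x|^2\,dx+\mu^{3/2}\iint|\mathbf{w}_{xx}|^2/\rho\,dxds$ together with the boundary contribution $\sqrt{\mu}\int_0^t\mathbf{w}_t\cdot\mathbf{w}_x\big|_{0}^{1}\,ds$. The interior $\mathbf{b}_x$-term is dispatched by Cauchy--Schwarz using $\iint|\mathbf{b}_x|^2\le C$, while $\sqrt{\mu}\iint u\,\mathbf{w}_x\cdot\mathbf{w}_{xx}$ becomes $-\tfrac{\sqrt{\mu}}{2}\iint u_x|\mathbf{w}_x|^2$ after integrating by parts (using $u|_{\partial\Omega}=0$), contributing a Gronwall factor controlled by $u_x\in L^2(0,T;L^\infty)$. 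For the boundary values, $\mathbf{w}_t|_{\partial\Omega}$ is bounded by \eqref{assumption1}, and the interpolation \eqref{wx2} combined with the book-keeping $\mu^{1/2}=\mu^{1/8}\cdot\mu^{3/8}$ yields
\[
\sqrt{\mu}\,|\mathbf{w}_x(a,s)|\;\le\;C\sqrt{\mu}+C\Bigl(\sqrt{\mu}\!\int|\mathbf{w}_x|^2dx\Bigr)^{\!1/4}\Bigl(\mu^{3/2}\!\int|\mathbf{w}_{xx}|^2dx\Bigr)^{\!1/4}.
\]
Young's inequality $a^{1/4}b^{1/4}\le\varepsilon b+C_\varepsilon a^{1/3}$ then absorbs a small fraction of $\mu^{3/2}\iint|\mathbf{w}_{xx}|^2$ into the dissipation and leaves a $(\sqrt{\mu}\int|\mathbf{w}_x|^2)^{1/3}\le 1+\sqrt{\mu}\int|\mathbf{w}_x|^2$ contribution to be handled by Gronwall, closing the first estimate.

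For the weighted estimate I would run parallel energy arguments for $\mathbf{w}$ and $\mathbf{b}$ with weight $\omega$ instead of $\omega^2$, multiplying \eqref{w12} by $\mathbf{w}_{xx}\omega$ and \eqref{e1}$_4$ by $\mathbf{b}_{xx}\omega$. Since $\omega$ vanishes at $\partial\Omega$, every boundary term produced by integration by parts is zero; only the interior commutators $\iint\mathbf{w}_t\cdot\mathbf{w}_x\omega'\,dxds$ and $\iint\mathbf{b}_t\cdot\mathbf{b}_x\omega'\,dxds$ survive. The $\mathbf{b}$-commutator is bounded directly by $(\iint|\mathbf{b}_t|^2)^{1/2}(\iint|\mathbf{b}_x|^2)^{1/2}\le C$ via Lemma \ref{2.10}, giving
\[
\int|\mathbf{b}_x|^2\omega\,dx+\iint|\mathbf{b}_{xx}|^2\omega\,dxds\;\le\;C+C\!\iint|\mathbf{w}_x|^2\omega\,dxds.
\]
For the $\mathbf{w}$-commutator I would substitute $\mathbf{w}_t=\tfrac{\mu}{\rho}\mathbf{w}_{xx}-u\,\mathbf{w}_x+\tfrac{\mathbf{b}_x}{\rho}$ and control the three resulting pieces by, respectively, (a) $\mu(\iint|\mathbf{w}_{xx}|^2)^{1/2}(\iint|\mathbf{w}_x|^2)^{1/2}\le C$ using the first estimate of this very lemma, (b) a Gronwall term $\int_0^t\|u_x\|_{L^\infty}\!\int|\mathbf{w}_x|^2\omega\,dxds$ after using $|u|\le\|u_x\|_{L^\infty}\omega$, and (c) $\int_0^t\|\mathbf{b}_x\|_{L^\infty}\|\mathbf{w}_x\|_{L^1}\,ds\le C$ by Lemmas \ref{2.11} and \ref{2.12}. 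The cross term $\iint\mathbf{b}_x\cdot\mathbf{w}_{xx}\omega/\rho$ on the right is integrated by parts (boundary-free, as $\omega|_{\partial\Omega}=0$) to shift the second derivative onto $\mathbf{b}$: the resulting $\iint\mathbf{w}_x\cdot\mathbf{b}_{xx}\omega/\rho$ is absorbed into the $\iint|\mathbf{b}_{xx}|^2\omega$ dissipation of the companion $\mathbf{b}$-inequality, and the $\omega'$ and $\rho_x$ remainders are of the same types as above. Adding the two weighted inequalities and invoking Gronwall with $u_x\in L^2(0,T;L^\infty)$ finishes the proof.

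The main obstacle is precisely the $\omega'$-commutator in the $\mathbf{w}$-estimate: because $\iint|\mathbf{w}_x|^2$ is only $O(\mu^{-1/2})$ after the first part of the lemma and not better, a naive Cauchy--Schwarz on this term loses a power of $\mu$. The decomposition via \eqref{w12} is the key trick, after which the first estimate of this very lemma provides just enough control on $\mu\,\mathbf{w}_{xx}$, and Lemmas \ref{2.11}--\ref{2.12} on $\mathbf{b}_x$ and on $\mathbf{w}_x$ at the boundary provide just enough control on the remaining pieces, to keep everything $\mu$-uniform.
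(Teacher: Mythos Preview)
Your argument for the second (weighted) estimate is essentially the paper's: replace $\omega^2$ by $\omega$ in the identities of Lemmas \ref{2.7}--\ref{2.8}, substitute $\mathbf w_t$ from \eqref{w12} in the $\omega'$-commutator, and control the three resulting pieces exactly as you describe. That part is fine.

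The gap is in the first estimate, in your treatment of the $\mathbf b_x$-term. After multiplying \eqref{w12} by $\sqrt\mu\,\mathbf w_{xx}$, the term on the right is $-\sqrt\mu\iint_{Q_t}\rho^{-1}\mathbf b_x\cdot\mathbf w_{xx}\,dxds$. A direct Cauchy--Schwarz against $\iint|\mathbf b_x|^2\le C$ gives
\[
\Bigl|\sqrt\mu\iint\rho^{-1}\mathbf b_x\cdot\mathbf w_{xx}\Bigr|
\;\le\;\tfrac14\,\mu^{3/2}\!\iint\rho^{-1}|\mathbf w_{xx}|^2
\;+\;C\,\mu^{-1/2}\!\iint|\mathbf b_x|^2
\;=\;\tfrac14\,\mu^{3/2}\!\iint\rho^{-1}|\mathbf w_{xx}|^2+O(\mu^{-1/2}),
\]
so the right-hand side blows up as $\mu\to0$ and the estimate does not close. (Using Lemma \ref{2.6} instead, $\mu^2\iint|\mathbf w_{xx}|^2\le C$, gives the same $\mu^{-1/2}$ loss.)

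The paper avoids this by integrating the cross term by parts in $x$, shifting the second derivative onto $\mathbf b$: working with multiplier $\mu\,\mathbf w_{xx}$ as in Lemma \ref{2.6},
\[
-\mu\!\iint\rho^{-1}\mathbf b_x\cdot\mathbf w_{xx}
=\mu\!\iint\rho^{-1}\mathbf b_{xx}\cdot\mathbf w_x
-\mu\!\iint\rho^{-2}\rho_x\,\mathbf b_x\cdot\mathbf w_x
-\mu\!\int_0^t\rho^{-1}\mathbf b_x\cdot\mathbf w_x\Big|_{0}^{1}ds.
\]
The key extra ingredient is an \emph{a priori} bound $\iint|\mathbf b_{xx}|^2\le C+C\iint|\mathbf w_x|^2$, obtained by testing \eqref{e1}$_4$ against $\mathbf b_{xx}$ and using Lemma \ref{2.10}. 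With this, $\mu\iint\rho^{-1}\mathbf b_{xx}\cdot\mathbf w_x\le C\mu+C\mu\iint|\mathbf w_x|^2$ becomes a Gronwall term; the $\rho_x$-piece is bounded via Lemmas \ref{2.4} and \ref{2.12}; and the boundary piece is handled with \eqref{wx2} exactly as you did for $\mathbf w_t\cdot\mathbf w_x|_{\partial\Omega}$. Gronwall then yields $\mu\int|\mathbf w_x|^2\le C\sqrt\mu$, which is the first estimate. Your boundary-term analysis is correct; you only need to replace the naive Cauchy--Schwarz on the $\mathbf b_x$-term by this integration-by-parts plus the auxiliary $\mathbf b_{xx}$-estimate.
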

\begin{proof}
For the first estimate, we can use an argument similar to  Lemma 2.6
to finish the proof. The key is to deal with the  term
$-\mu\iint_{Q_t}\frac{1}{\rho}\mathbf{b}_x\cdot\mathbf{w}_{xx}dxds $
in \eqref{0v100}.

By integrating by parts and using Cauchy-Schwarz's inequality, we
have
\begin{equation}\label{w0}
\begin{split}
& -\mu\iint_{Q_t}\frac{1}{\rho}\mathbf{b}_x\cdot\mathbf{w}_{xx}dxds\\[1mm]
 &=\mu\iint_{Q_t}\frac{\mathbf{b}_{xx}\cdot\mathbf{w}_{x}}{\rho}dxds
 -\mu\iint_{Q_t}\frac{\mathbf{b}_{x}\cdot\mathbf{w}_{x}}{\rho^2}\rho_xdxds
 -\mu\int_0^T \frac{\mathbf{b}_x\cdot\mathbf{w}_{x}}{\rho}\bigg|_{x=0}^{x=1} ds\\[1mm]
 &\leq C\mu\iint_{Q_t}| \mathbf{b}_{xx}|^2 dxds+C\mu\iint_{Q_t}|\mathbf{w}_{x}|^2dxds+\mu\iint_{Q_t}|\mathbf{b}_{x}|^2 \rho_x^2dxds\\[1mm]
 &\quad+C\mu\left(\int_0^t\|\mathbf{b}_{x}\|_{L^\infty(\Omega)}^2 ds\right)^{1/2}\left(\int_0^t\|\mathbf{w}_{x}\|_{L^\infty(\Omega)}^2 ds\right)^{1/2}\\[1mm]
 &\leq C\mu + C\mu\iint_{Q_t}| \mathbf{b}_{xx}|^2 dxds+C\mu\iint_{Q_t}|\mathbf{w}_{x}|^2dxds
 +C\mu \left(\int_0^t\|\mathbf{w}_{x}\|_{L^\infty(\Omega)}^2 ds\right)^{1/2},\\
  \end{split}
\end{equation}
where we used the fact  by Lemmas \ref{2.4} and \ref{2.12}
\begin{equation*}
\begin{split}
  \iint_{Q_t}|\mathbf{b}_{x}|^2|\rho_x|^2dxds \leq
  \int_0^t\|\mathbf{b}_{x}\|_{L^\infty(\Omega)}^2\int_\Omega\rho_x^2dxds\leq
  C.
  \end{split}
\end{equation*}
By \eqref{wx2}, we obtain
\begin{equation}\label{w01}
\begin{split}
  &\mu \left(\int_0^t\|\mathbf{w}_{x}\|_{L^\infty(\Omega)}^2 ds\right)^{1/2}\\
   &\leq C\mu+C \mu^{1/4}\left(\mu\iint_{Q_t}|\mathbf{w}_{x}|^2dxds\right)^{1/4}
   \left(\mu^2\iint_{Q_t}|\mathbf{w}_{xx}|^2dxds\right)^{1/4}\\[1mm]
&\leq C\sqrt{\mu}+\frac{C  \mu}{\epsilon}\iint_{Q_t}
|\mathbf{w}_{x}|^2dxds+\epsilon \mu^2
\iint_{Q_t}\frac{1}{\rho}|\mathbf{w}_{xx}|^2dxds,~ \forall
\epsilon>0.
\end{split}
\end{equation}
It remains to show the estimate
\begin{equation}\label{bxx}
\begin{split}
 & \iint_{Q_t}|\mathbf{b}_{xx}|^2dxds \leq C+ C\iint_{Q_t}  |\mathbf{w}_x|^2dxds.
\end{split}
\end{equation}
Multiplying \eqref{e1}$_4$ by $\mathbf{b}_{xx}$ and integrating over
$Q_t$, we have
\begin{equation*}
\begin{split}
 &\frac12\int_\Omega|\mathbf{b}_x|^2dx+\nu\iint_{Q_t}|\mathbf{b}_{xx}|^2dxds\\
   &=\frac12\int_\Omega|\mathbf{b}_{0x}|^2dx+\iint_{Q_t}u_x\mathbf{b}\cdot\mathbf{b}_{xx} dxds-\frac12\iint_{Q_t}u_x|\mathbf{b}_x|^2 dxds-\iint_{Q_t}  \mathbf{w}_x\cdot\mathbf{b}_{xx}dxds \\
 &\leq C+\frac{\nu}{2}\iint_{Q_t}  |\mathbf{b}_{xx}|^2 dxds+C\iint_{Q_t}  |\mathbf{w}_x|^2 dxds +C\int_0^t\|u_x\|_{L^\infty(\Omega)}\int_{\Omega}|\mathbf{b}_x|^2 dxds,
 \end{split}
\end{equation*}
where we used Lemma \ref{2.10}. Thus, \eqref{bxx} follows from Gronwall's inequality.

Inserting  the above estimates into \eqref{w0} and taking a small
$\epsilon>0$, we have
\begin{equation*}
\begin{split}
  -\mu\iint_{Q_t}\frac{1}{\rho}\mathbf{b}_x\cdot\mathbf{w}_{xx}dxdt \leq C\sqrt{\mu} +C\mu\iint_{Q_t}|\mathbf{w}_{x}|^2dxdt+\frac{\mu^2}{4}
\iint_{Q_t}\frac{1}{\rho}|\mathbf{w}_{xx}|^2dxds.
  \end{split}
\end{equation*}
Then, an argument similar to Lemma \ref{2.6} leads to
\begin{equation*}
\begin{split}
  &\mu\int_\Omega|\mathbf{w}_x|^2dx
 +\mu^2\iint_{Q_t} |\mathbf{w}_{xx}|^2dxds  \\
 &\leq C\sqrt{\mu}
 +C\int_0^t\big(1+\|u_x\|_{L^\infty(\Omega)}\big)\left(\mu\int_{\Omega}|\mathbf{w}_x|^2dx\right)ds.
  \end{split}
\end{equation*}
So   the first estimate of this lemma follows from Gronwall's inequality and
\eqref{uxx}.

The second estimate can proved by the arguments similar to Lemma \ref{2.7} and
\eqref{w1}$_1$ and in terms of the first estimate and Lemmas
\ref{2.10}-\ref{2.12}. In fact, this can be done  by using $\omega$ instead of
$\omega^2$ in \eqref{b111} and   \eqref{0v10} and noticing the following facts:
\begin{equation*} \begin{split}
  & \mu\iint_{Q_T}|\mathbf{w}_x\cdot\mathbf{w}_{xx}|dxdt \leq C\sqrt{\mu}\iint_{Q_T}|\mathbf{w}_x|^2dxdt+C\mu^{3/2}\iint_{Q_T}|\mathbf{w}_{xx}|^2dxdt\leq C,\\[1mm]
  & \iint_{Q_T}|\mathbf{b}_x\cdot\mathbf{w}_{x}|dxdt  \leq C\int_0^T\|\mathbf{b}_x\|_{L^\infty(\Omega)}\int_\Omega|\mathbf{w}_x|dxdt \leq
 C.
\end{split}\end{equation*}
 The proof is complete.
\end{proof}

As a consequence of Lemma \ref{2.13} and \eqref{w01}, we also have
\begin{equation}\label{wx4}
\begin{split}
   \mu^{3/2}\iint_{Q_T} |\mathbf{w}_{x}|^4dxdt  \leq C \mu \int_0^T \|\mathbf{w}_{x}\|_{L^\infty(\Omega)}^2 \left(\sqrt{ \mu}\int_{Q_T}|\mathbf{w}_x|^2dx\right)dt\leq C.
  \end{split}
\end{equation}

Based on the above lemmas, we can bound the temperature $\theta$ in a direct way.

\begin{lemma}\label{2.14}
 Let \eqref{kappa} and \eqref{assumption1} hold. Then  $\theta \leq C.$
\end{lemma}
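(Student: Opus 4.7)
My plan is to establish the upper bound on $\theta$ by an $L^{2n}$ energy estimate, followed by letting $n\to\infty$. This can be viewed as a rigorous version of the parabolic maximum principle adapted to the regularity currently available, and the two essential ingredients are the uniform two-sided bound $C^{-1}\leq\rho\leq C$ (Lemma \ref{2.2}) together with the $L^1(0,T)$-integrability, uniform in $\mu$, of both $\|u_x(\cdot,t)\|_{L^\infty(\Omega)}$ and $\|\mathcal{Q}(\cdot,t)\|_{L^\infty(\Omega)}$.

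First I rewrite $(\ref{e1})_5$, using $e=\theta$ and the continuity equation, as
\[
\rho(\theta_t+u\theta_x)-(\kappa\theta_x)_x+\gamma\rho\theta u_x=\mathcal{Q}.
\]
Multiplying by $\theta^{2n-1}$, integrating over $\Omega$ and using the Neumann boundary condition $\theta_x|_{x=0,1}=0$ together with the continuity equation to combine the transport terms, I obtain
\[
\frac{1}{2n}\frac{d}{dt}\int_\Omega\rho\theta^{2n}\,dx+(2n-1)\int_\Omega\kappa\theta^{2n-2}\theta_x^2\,dx=\int_\Omega\mathcal{Q}\theta^{2n-1}\,dx-\gamma\int_\Omega\rho\theta^{2n}u_x\,dx.
\]
The second term on the left is nonnegative and can be discarded. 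Setting $Y_n(t):=\big(\int_\Omega\rho\theta^{2n}\,dx\big)^{1/(2n)}$, H\"older's inequality with $|\Omega|=1$ together with the lower bound on $\rho$ yields
\[
Y_n'(t)\leq C\|\mathcal{Q}(\cdot,t)\|_{L^\infty(\Omega)}+C\|u_x(\cdot,t)\|_{L^\infty(\Omega)}Y_n(t),
\]
with $C$ independent of $n$ and $\mu$. Since $Y_n(0)\leq C\|\theta_0\|_{L^\infty(\Omega)}$ is uniformly bounded, Gronwall then gives $Y_n(t)\leq C$ uniformly in $n$, $t$ and $\mu$; passing $n\to\infty$ and using that $\rho$ is bounded above and away from zero so that $Y_n(t)\to\|\theta(\cdot,t)\|_{L^\infty(\Omega)}$, I obtain the desired upper bound.

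The main obstacle is verifying that $\|u_x\|_{L^\infty(\Omega)}$ and $\|\mathcal{Q}\|_{L^\infty(\Omega)}$ are in $L^1(0,T)$ with $\mu$-independent bounds. For $\|u_x\|_{L^\infty(\Omega)}$, estimate \eqref{u0} from Lemma \ref{2.5} gives an $L^{m_0}(0,T)$-bound (hence $L^1$), and \eqref{ux} together with $\|u_{xx}\|_{L^2(Q_T)}\leq C$ of Lemma \ref{2.10} yields even an $L^2(0,T)$-bound. Since $\mathcal{Q}=\lambda u_x^2+\mu|\mathbf{w}_x|^2+\nu|\mathbf{b}_x|^2$, the first term is then handled by what was just said, and the third by Lemma \ref{2.12}. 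The delicate term is $\mu\|\mathbf{w}_x\|_{L^\infty(\Omega)}^2$; the interpolation \eqref{wx2} gives
\[
\mu\|\mathbf{w}_x\|_{L^\infty(\Omega)}^2\leq C\mu+C\big(\mu^{1/4}\|\mathbf{w}_x\|_{L^2(\Omega)}\big)\big(\mu^{3/4}\|\mathbf{w}_{xx}\|_{L^2(\Omega)}\big),
\]
so Cauchy-Schwarz together with the refined bounds $\sqrt{\mu}\|\mathbf{w}_x\|_{L^\infty_tL^2_x}^2\leq C$ and $\mu^{3/2}\|\mathbf{w}_{xx}\|_{L^2(Q_T)}^2\leq C$ of Lemma \ref{2.13} produces the required $\mu$-independent bound. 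This is precisely the point at which the sharper Lemma \ref{2.13} is indispensable; the coarser Lemma \ref{2.6} alone would not suffice here.
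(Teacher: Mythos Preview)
Your argument is correct and takes a genuinely different route from the paper's.  The paper differentiates the temperature equation in $x$, obtaining an equation for $z=\theta_x$, multiplies it by the regularized sign function $\varphi_\epsilon'(z)$ with $\varphi_\epsilon(s)=\sqrt{s^2+\epsilon^2}$, and after letting $\epsilon\to0$ derives the $W^{1,1}$ bound $\int_\Omega|\theta_x|\,dx\leq C$; the pointwise bound then follows from the embedding together with $\int_\Omega\theta\,dx\leq C$.  Your approach, by contrast, is a Moser-type $L^{2n}$ iteration applied directly to the undifferentiated equation, closing a Gronwall inequality for $Y_n$ and sending $n\to\infty$.

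Both proofs rest on the same stock of a priori estimates, in particular Lemma \ref{2.10} for $\int_0^T\|u_x\|_{L^\infty}^2dt$, Lemma \ref{2.12} for $\int_0^T\|\mathbf{b}_x\|_{L^\infty}^2dt$, and crucially the sharpened bounds of Lemma \ref{2.13} to control the $\mathbf{w}$-contribution; your observation that Lemma \ref{2.6} alone would not close the argument is exactly right, and the paper is in the same position (it needs Lemma \ref{2.13} to bound $\iint\mu|\mathbf{w}_x\cdot\mathbf{w}_{xx}|$ and $\iint\mu^2|\mathbf{w}_x|^4$ arising in $\iint|f_x|$).  Your route is arguably more elementary, since it avoids differentiating the equation and the ensuing need to control $\iint|f_x|$ with $f=\rho^{-1}\mathcal{Q}$; it requires only the $L^1_tL^\infty_x$ bound on $\mathcal{Q}$ itself.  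The paper's route, on the other hand, yields the intermediate information $\sup_t\int_\Omega|\theta_x|\,dx\leq C$, though this is superseded anyway by the stronger $L^\infty_tL^2_x$ bound of Lemma \ref{2.15}.
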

\begin{proof}
 Rewrite the equation \eqref{e1}$_4$  in the form
\begin{equation}\label{theta9}
 \begin{split}
 \theta_t= a(x,t)\theta_{xx}+b(x,t)\theta_x+c(x,t) \theta+f(x,t),
\end{split}
\end{equation}
where
\begin{equation*}
 \begin{split}
a =\rho^{-1}\kappa,\quad b =\rho^{-1}\kappa_x-u,\quad c =-\gamma
u_x,\quad f =\rho^{-1} (\lambda
u_x^2+\mu|\mathbf{w}_x|^2+\nu|\mathbf{b}_x|^2).
\end{split}
\end{equation*}
Set $z=\theta_x$. Differentiating the equation \eqref{theta9} in $x$ yields
\begin{equation}\label{z1}
 \begin{split}
 z_t=(a z_x)_x+(b z)_x+c z+c_x\theta+f_x.
\end{split}
\end{equation}
For $\epsilon \in (0, 1)$, denote $\varphi_\epsilon:\mathbb{R}\rightarrow \mathbb{R}^+$   by
$
\varphi_\epsilon(s)=\sqrt{s^2+\epsilon^2}.
$
Simple calculations show that
\begin{equation*}\label{phi}
 \left\{\begin{split}
 &\varphi_\epsilon'(0)=0,\quad|\varphi_\epsilon'(s)|\leq 1,  \quad\varphi_\epsilon''(s)\geq 0, \quad |s\varphi_\epsilon''(s)|\leq 1,\\
 &\lim\limits_{\epsilon\rightarrow 0} \varphi_\epsilon(s)=|s|, \quad\lim\limits_{\epsilon\rightarrow 0}s\varphi_\epsilon''(s)=0.
\end{split}\right.
\end{equation*}
Multiplying  \eqref{z1} by $\varphi_\epsilon'(z)$, integrating over
$Q_t$, and noticing
$\varphi_\epsilon'(z)|_{x=0,1}=\varphi_\epsilon'(\theta_x)|_{x=0,1}=0$,
we have
\begin{equation*}
 \begin{split}
 \int_\Omega \varphi_\epsilon(z)dx-\int_\Omega \varphi_\epsilon(\theta_{0x})dxds&=-\iint_{Q_t} a \varphi_\epsilon''(z)z_x^2dxds-\iint_{Q_t} b z z_x\varphi_\epsilon''(z)dxds\\
 &\quad+\iint_{Q_t}  (c z+c_x \theta+f_x)\varphi_\epsilon'(z)dxds,\\
 \end{split}
\end{equation*}
and then, we obtain by $\varphi_\epsilon''(s)\geq 0$ and $|\varphi_\epsilon'(s)|\leq 1$
\begin{equation}\label{z2}
 \begin{split}
  &\int_\Omega \varphi_\epsilon(z)dx-\int_\Omega \varphi_\epsilon(\theta_{0x})dx \\
  &\leq \iint_{Q_t}|b z_x||z\varphi_\epsilon''(z)|dxds  + \iint_{Q_t}(|c z|+|c_x \theta|+|f_x|)dxds.
\end{split}
\end{equation}
Recalling $|s\varphi_\epsilon''(s)|\leq 1$ and
$s\varphi_\epsilon''(s)\rightarrow 0$ as $\epsilon\rightarrow 0$ and
using Lebesgue's dominated convergence theorem, we obtain
 \begin{equation*}
 \begin{split}
 \lim\limits_{\epsilon\rightarrow 0}\iint_{Q_T} |b z_x||z\varphi_\epsilon''(z)|dxdt=0.
\end{split}
\end{equation*}
Thus, passing to the limit  as $\epsilon\rightarrow 0$ in \eqref{z2} and using $\lim\limits_{\epsilon\rightarrow 0} \varphi_\epsilon(s)=|s|$, we have
\begin{equation}\label{theta10}
 \begin{split}
  \int_\Omega |\theta_x|dx\leq \int_\Omega|\theta_{0x}|dx  + \iint_{Q_t}(|c \theta_x|+|c_x \theta|+|f_x|)dxds.
\end{split}
\end{equation}
By Lemma \ref{2.10}, we have
\begin{equation*}
 \begin{split}
 &\iint_{Q_T}|c_x\theta| dxdt\leq C\left(\iint_{Q_T}u_{xx}^2dxdt\right)^{1/2}\left(\iint_{Q_T}\theta^2dxdt\right)^{1/2}\leq C,\\
&\iint_{Q_T}|c \theta_x| dxdt\leq C\left(\iint_{Q_T}u_{x}^2dxdt\right)^{1/2}\left(\iint_{Q_T}\theta_x^2dxdt\right)^{1/2}\leq C.
\end{split}
\end{equation*}
By Cauchy-Schwarz's inequality,  \eqref{bxxbound}, Lemmas \ref{2.4}
and \ref{2.10}, \eqref{ux4}, \eqref{bxbound} and \eqref{wx4}, we
obtain
\begin{equation*}\label{theta6}
 \begin{split}
 \iint_{Q_T}|f_x| dxdt
&\leq C\iint_{Q_T}\big(|u_x||u_{xx}|+\mu|\mathbf{w}_x\cdot\mathbf{w}_{xx}|+ |\mathbf{b}_x\cdot\mathbf{b}_{xx}|\big)dxdt\\
&\quad+C\iint_{Q_T}\big(u_x^2+\mu|\mathbf{w}_x|^2+ |\mathbf{b}_x|^2\big)|\rho_x|dxdt\\
&\leq C+C\iint_{Q_T}\big(u_x^2+u_{xx}^2 + \sqrt{\mu} |\mathbf{w}_x|^2+\mu^{3/2} |\mathbf{w}_{xx}|^2\big)dxdt \\
&\quad+C\iint_{Q_T}\big(u_x^4+\mu^2|\mathbf{w}_x|^4+|\mathbf{b}_x|^4\big)dxdt+C\iint_{Q_T}\rho_x^2dxdt \leq C.
\end{split}
\end{equation*}
 Substituting the above estimates into \eqref{theta10} yields
$$
\int_\Omega|\theta_x|dx\leq C,
$$
which together with $\int_\Omega\theta dx\leq C$  implies the desired result. The proof is complete.
\end{proof}

By means of the bounds of $\theta$, we can obtain easily the
following estimates.

\begin{lemma}\label{2.15}
Let \eqref{kappa} and \eqref{assumption1} hold. Then
\begin{equation*}\begin{split}
\sup\limits_{0<t<T}\int_\Omega \theta_x^2dx+\iint_{Q_T} \big(\theta_t^2+\theta_{xx}^2\big)dxdt\leq C.
\end{split}\end{equation*}
\end{lemma}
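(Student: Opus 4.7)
The plan is to perform an energy estimate by multiplying the temperature equation
$$\rho\theta_t-(\kappa\theta_x)_x=\mathcal{Q}-\rho u\theta_x-\gamma\rho\theta u_x,\qquad \mathcal{Q}=\lambda u_x^2+\mu|\mathbf{w}_x|^2+\nu|\mathbf{b}_x|^2,$$
by $\theta_t$ and integrating over $Q_t$. Using the Neumann condition $\theta_x|_{x=0,1}=0$ and the identity $\kappa\theta_x\theta_{xt}=\tfrac12(\kappa\theta_x^2)_t-\tfrac12\kappa_t\theta_x^2$ produces, after integration by parts in $x$ and then in $t$,
$$\iint_{Q_t}\rho\theta_t^2\,dxds+\tfrac12\int_\Omega\kappa\theta_x^2\,dx=\tfrac12\int_\Omega\kappa|_{s=0}\theta_{0x}^2\,dx+\tfrac12\iint_{Q_t}\kappa_t\theta_x^2\,dxds+\iint_{Q_t}\bigl[\mathcal{Q}-\rho u\theta_x-\gamma\rho\theta u_x\bigr]\theta_t\,dxds.$$
Because Lemmas \ref{2.2} and \ref{2.14} give $C^{-1}\le\rho,\theta\le C$ and $\kappa\in C^2$, the quantities $\kappa$, $\kappa_\rho$, $\kappa_\theta$ are uniformly bounded, so the coercivity constant in front of $\int\kappa\theta_x^2\,dx$ is controlled from below.

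I would then bound the right-hand side in pieces. By Cauchy--Schwarz,
$$\Bigl|\iint\theta_t[\mathcal{Q}-\rho u\theta_x-\gamma\rho\theta u_x]\Bigr|\le\epsilon\iint\theta_t^2+C_\epsilon\iint\bigl(\mathcal{Q}^2+u^2\theta_x^2+u_x^2\bigr),$$
and the terms $\iint\mathcal{Q}^2$, $\iint u_x^2$ are finite thanks to \eqref{ux4}, \eqref{wx4}, \eqref{bxbound} and Lemma \ref{2.10}, while $\iint u^2\theta_x^2\le C\int_0^t\int\theta_x^2\,dxds$ via $\|u\|_{L^\infty(Q_T)}\le C$. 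For the $\kappa_t$ term, split $\kappa_t=\kappa_\rho\rho_t+\kappa_\theta\theta_t$; the second piece is absorbed as $\epsilon\iint\theta_t^2+C_\epsilon\iint\theta_x^4$, and the first is handled by Cauchy--Schwarz in $x$ together with the bound $\sup_t\int\rho_t^2\,dx\le C$ from Lemma \ref{2.10}, yielding $|\iint\kappa_\rho\rho_t\theta_x^2|\le C\int_0^t\bigl(\int\theta_x^4\,dx\bigr)^{1/2}\,ds$.

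Closing the loop forces us to control $\iint\theta_x^4$, which requires an estimate of $\iint\theta_{xx}^2$. I would obtain the latter by solving the equation for $\theta_{xx}$:
$$\theta_{xx}=\kappa^{-1}\bigl[\rho\theta_t+\rho u\theta_x+\gamma\rho\theta u_x-\mathcal{Q}-(\kappa_\rho\rho_x+\kappa_\theta\theta_x)\theta_x\bigr],$$
so that
$$\iint\theta_{xx}^2\le C\iint\bigl[\theta_t^2+u^2\theta_x^2+u_x^2+\mathcal{Q}^2+\rho_x^2\theta_x^2+\theta_x^4\bigr].$$
The dangerous pieces $\iint\rho_x^2\theta_x^2$ and $\iint\theta_x^4$ are both dominated by $\int_0^T\|\theta_x\|_{L^\infty}^2$ times integrals already controlled (via $\sup_t\int\rho_x^2\le C$ from Lemma \ref{2.4} and $\sup_t\int\theta_x^2$ being the quantity under study). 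The Gagliardo--Nirenberg interpolation $\|\theta_x\|_{L^\infty}^2\le C(\|\theta_x\|_{L^2}^2+\|\theta_x\|_{L^2}\|\theta_{xx}\|_{L^2})$ together with Young's inequality transfers these contributions into $\epsilon\iint\theta_{xx}^2+C_\epsilon\int_0^t\|\theta_x\|_{L^2}^2(1+\|\theta_x\|_{L^2}^2)\,ds$.

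Setting $y(t)=\int\theta_x^2(t)\,dx$, combining the two estimates and choosing $\epsilon$ small enough to absorb the $\iint\theta_t^2$ and $\iint\theta_{xx}^2$ contributions on the left leads to
$$y(t)+\iint_{Q_t}(\theta_t^2+\theta_{xx}^2)\,dxds\le C+C\int_0^t(1+y)y\,ds.$$
Since $\int_0^T y\,ds\le C$ is available from the already-proven $\iint\kappa\theta_x^2\le C$ in Lemma \ref{2.10}, Gronwall's inequality closes the argument. The principal obstacle is precisely this bootstrap: $\theta_{xx}$ appears on both sides of every estimate that controls $\|\theta_x\|_{L^\infty}$ (through $\rho_x^2\theta_x^2$, $\theta_x^4$, and the $\kappa_t$ term), and the constants must be tracked so that a single small $\epsilon$ simultaneously absorbs all self-referential $\iint\theta_{xx}^2$ terms.
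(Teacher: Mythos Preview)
Your strategy is natural, but the Gronwall closure as you wrote it does not actually work. The trouble comes from the term $\kappa_\theta\theta_t\theta_x^2$ arising from $\kappa_t\theta_x^2$. After Cauchy--Schwarz this produces $\iint_{Q_t}\theta_x^4$, and the one-dimensional interpolation (using $\theta_x|_{x=0,1}=0$) gives
\[
\int_\Omega\theta_x^4\,dx\le 2\Bigl(\int_\Omega\theta_x^2\,dx\Bigr)^{3/2}\Bigl(\int_\Omega\theta_{xx}^2\,dx\Bigr)^{1/2}=2\,y^{3/2}\|\theta_{xx}\|_{L^2},
\]
so Young's inequality yields $\epsilon\iint\theta_{xx}^2+C_\epsilon\int_0^t y^3\,ds$, not $C_\epsilon\int_0^t y(1+y)\,ds$ as you claimed. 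The final integral inequality is therefore of the form $y(t)\le C+C\int_0^t y^3\,ds$, and with only $\int_0^T y\,ds\le C$ available from Lemma~\ref{2.10} this does \emph{not} close: Gronwall would require $\int_0^T y^2\,ds<\infty$, which is precisely what you are trying to prove.

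The paper avoids this obstacle by multiplying the temperature equation by $\kappa\theta_t$ rather than $\theta_t$. The point is the algebraic identity
\[
\kappa\theta_x\,(\kappa\theta_t)_x=\tfrac12\bigl[(\kappa\theta_x)^2\bigr]_t+\kappa\kappa_\rho\rho_x\theta_x\theta_t+\kappa\kappa_\rho\theta_x^2(\rho_x u+\rho u_x),
\]
in which the $\kappa_\theta$-contributions cancel exactly (they are absorbed into the perfect time derivative). Thus no $\theta_x^4$ term ever appears; one is left only with $\kappa_\rho$-error terms, all of which are bounded by $C\int_0^t\|\kappa\theta_x\|_{L^\infty}^2\,ds$. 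Since $(\kappa\theta_x)_x=\rho\theta_t-f$ from the equation itself, this feeds back \emph{linearly} into $\iint\theta_t^2$ with a small coefficient and closes immediately without any nonlinear Gronwall loop. This choice of multiplier is the missing idea in your outline.
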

\begin{proof} Rewrite the equation \eqref{e1}$_5$ in the form
\begin{equation}\label{theta1}
\begin{split}
 \rho\theta_t-(\kappa\theta_x)_x=\mathcal{Q}-\rho
 u\theta_x-\gamma\rho\theta u_x:=f.
\end{split}\end{equation}
We first estimate  $\|f\|_{L^2(Q_T)}$. By  \eqref{ux4},
\eqref{bxbound}, \eqref{wx4}  and Lemmas \ref{2.10} and \ref{2.14},
we have
\begin{equation}\label{f}
\begin{split}
 \iint_{Q_T}f^2dxdt\leq &
   C\iint_{Q_T}(u_x^4+\mu^2|\mathbf{w}_x|^4+\nu^2|\mathbf{b}_x|^4+\rho^2u^2\theta_x^2+\rho^2u_x^2\theta^2)dxdt \leq C.
\end{split}\end{equation}
 Multiplying  \eqref{theta1} by $\kappa\theta_t$ and  integrating over $Q_t$, we have
\begin{equation}\label{theta2}
\begin{split}
 &\iint_{Q_t} \rho\kappa\theta_t^2 dxdt+\iint_{Q_t}  \kappa\theta_x (\kappa\theta_t)_x dxdt
 =\iint_{Q_t}f\kappa\theta_tdxdt.
\end{split}\end{equation}
Observe that
$$
(\kappa\theta_t)_x=(\kappa\theta_x)_t+\kappa_\rho\rho_x\theta_t+\kappa_\rho\theta_x(\rho_xu+\rho u_x),
$$
so that
\begin{equation*}\label{theta3}
\begin{split}
  \iint_{Q_t}  \kappa\theta_x (\kappa\theta_t)_x dxdt=& \frac12\int_\Omega \kappa^2\theta_x^2dx-\frac12\int_\Omega \kappa^2(\rho_0,\theta_0)\theta_{0x}^2dx\\
&+\iint_{Q_t}\Big[\kappa\kappa_\rho\rho_x\theta_x\theta_t+\kappa\kappa_\rho\theta_x^2(\rho_xu+\rho
u_x)\Big]dxdt,
\end{split}\end{equation*}
and substitute it into \eqref{theta2} to yield
\begin{equation}\label{theta8}
\begin{split}
 &\iint_{Q_t} \rho\kappa\theta_t^2 dxdt+\int_\Omega
 \kappa^2\theta_x^2dx\\
& \leq C-2\iint_{Q_t}\Big[\kappa\kappa_\rho\rho_x\theta_x\theta_t
 +\kappa\kappa_\rho\theta_x^2(\rho_xu+\rho u_x)-f\kappa\theta_t\Big]dxdt.\\
\end{split}\end{equation}
By the estimates $C^{-1}\leq \rho, \theta \leq C$ and \eqref{kappa},
we have $ \kappa_1\leq \kappa \leq C, |\kappa_\rho| \leq C. $ By
Young's inequality, \eqref{rho}, \eqref{f} and Lemma \ref{2.10}, we
obtain
\begin{equation}\label{theta4}
\begin{split}
 &-2\iint_{Q_t}\Big[\kappa\kappa_\rho\rho_x\theta_x\theta_t+\kappa\kappa_\rho\theta_x^2(\rho_xu+\rho u_x)
 -f\kappa\theta_t\Big]dxdt\\
 & \leq C+\frac14 \iint_{Q_t} \rho\kappa\theta_t^2 dxdt+C\iint_{Q_t} (\kappa\theta_x)^2(\rho_x^2+|\rho_x|+|u_x|) dxds\\
 & \leq C+\frac14 \iint_{Q_t} \rho\kappa\theta_t^2 dxdt+C\int_0^t\|\kappa\theta_x\|_{L^\infty(\Omega)}^2 ds.
\end{split}\end{equation}
Now we are ready to deal with the second integral on right-hand side
of \eqref{theta4}. By the embedding  $W^{1,1}(\Omega)\hookrightarrow
L^\infty(\Omega)$  and Young's inequality, we have
\begin{equation*}\begin{split}
 \int_0^t\|\kappa\theta_x\|_{L^\infty(\Omega)}^2 ds \leq& \iint_{Q_t} |\kappa\theta_x|^2 dxds+2\iint_{Q_t} | \kappa\theta_x ||(\kappa\theta_x)_x| dxds\\
 \leq & \frac{C}{\epsilon}+\frac{\epsilon}{2}\iint_{Q_t} \big|(\kappa\theta_x)_x\big|^2 dxds, \quad \forall \epsilon>0,
\end{split}\end{equation*}
 which together with \eqref{theta1} gives
\begin{equation*}\begin{split}
 \int_0^t\|\kappa\theta_x\|_{L^\infty(\Omega)}^2 ds \leq  \frac{C}{\epsilon}
 + \epsilon\iint_{Q_t} (\rho^2\theta_t^2+f^2)dxdt.
\end{split}\end{equation*}
Plugging it into \eqref{theta4}, taking a small $\epsilon>0$ and
using \eqref{f}, we obtain
\begin{equation*}
\begin{split}
  -2\iint_{Q_t}\Big[\kappa\kappa_\rho\rho_x\theta_x\theta_t+\kappa\kappa_\rho\theta_x^2(\rho_xu+\rho u_x)
  -f\kappa\theta_t\Big]dxdt
   \leq C+\frac12 \iint_{Q_t} \rho\kappa\theta_t^2 dxdt,
\end{split}\end{equation*}
from which, \eqref{kappa} and \eqref{theta8} it follows that
\begin{equation}\label{theta}
\begin{split}
\sup\limits_{0<t<T}\int_\Omega \theta_x^2dx+\iint_{Q_T}  \theta_t^2
dxdt\leq C.
\end{split}\end{equation}

By \eqref{theta}  and Lemma \ref{2.14}, one can derive easily from \eqref{e1}$_5$ that $\|\theta_{xx}\|_{L^2(Q_T)} \leq C$. The proof is complete.
\end{proof}

Due to Lemma \ref{2.15}, an argument similar to \eqref{ux4} yields
\begin{equation}\label{theta5}
\begin{split}
\iint_{Q_T}\theta_x^6dxdt\leq   C.
\end{split}\end{equation}

 Thus, all the estimates appearing in Theorem 1.1 are proved.

\subsection{Proof of Theorem 1.1(ii)}
 By an argument similar to \eqref{rho}$_2$, one has
\begin{equation}\label{base5}
\begin{aligned}
&\|(u, \mathbf{b}, \theta)\|_{C^{1/2, 1/4}(\overline Q_T)}\leq C,\\
&\|\mathbf{w}\|_{C^{1/2, 1/4}([\delta, 1-\delta]\times[0, T])}\leq C,~\forall\delta\in\big(0, (b-a)/2\big) .
  \end{aligned}
  \end{equation}
From  \eqref{rho}$_2$,  \eqref{ux4}, \eqref{bxbound},
\eqref{theta5}, \eqref{base5} and Lemmas \ref{2.2}, \ref{2.4},
\ref{2.10}-\ref{2.15} it follows that there exist a subsequence
$\mu_j \rightarrow 0$  and  $(\overline\rho, \overline u, \overline{\mathbf{w}},
\overline{\mathbf{b}}, \overline\theta) \in \mathbb{F}$
such that  the corresponding  solution
for problem \eqref{e1}-\eqref{e4} with $\mu=\mu_j $, still denoted by $(\rho,u,\mathbf{w},\mathbf{b}, \theta)$, converges in the sense:
\begin{equation*}\label{rate}
 \begin{split}
 &(\rho,u,\mathbf{b}, \theta)\rightarrow (\overline\rho,\overline u,\overline{\mathbf{b}}, \overline\theta)~~
 \hbox{\rm strongly in}~~C^\alpha(\overline Q_T),~\forall\alpha\in(0, 1/4),\\[1mm]
 &(\rho_t,\rho_x,u_x,\mathbf{b}_x,\theta_x)\rightharpoonup(\overline\rho_t, \overline\rho_x,\overline u_x,
 \overline{\mathbf{b}}_x,\overline\theta_x)~~ \hbox{\rm weakly}-*~\hbox{\rm in}~ L^\infty(0, T; L^2(\Omega)),\\[1mm]
  &(u_t,\mathbf{b}_t,\theta_t,u_{xx}, \theta_{xx})\rightharpoonup(\overline u_t,\overline{\mathbf{b}}_t,
  \overline\theta_t,\overline u_{xx},  \overline\theta_{xx})~~ \hbox{\rm  weakly in}~~L^2(Q_T),\\[1mm]
  & \mathbf{b}_{xx}  \rightharpoonup  \overline{\mathbf{b}}_{xx}~~ \hbox{\rm weakly in}~~L^2((a+\delta,b-\delta)\times(0, T)),~\forall \delta\in(0, (b-a)/2),\\
      \end{split}
  \end{equation*}
 and
  \begin{equation*}
   \begin{split}
 & \mathbf{w}  \rightarrow   \overline{\mathbf{w}}\quad\hbox{\rm strongly  in}~~C^\alpha([a+\delta,b-\delta]\times[0, T]),~\forall \delta\in\big(0, (b-a)/2\big),~\alpha\in(0, 1/4),\\
 & \mathbf{w}_t  \rightharpoonup   \overline{\mathbf{w}}_t   \quad\hbox{\rm weakly in}~~L^2(Q_T),\\
   &   \mathbf{w}_x \rightharpoonup  \overline{\mathbf{w}}_x\quad\hbox{\rm weakly}-*~ \hbox{\rm in}~ L^\infty(0, T; L^2(a+\delta,b-\delta)),~\forall \delta\in(0, (b-a)/2),\\
 &\mathbf{w}\rightarrow  \overline{\mathbf{w}}~~ \hbox{\rm strongly in}~~L^r(Q_T),\quad\forall r \in [1, +\infty),\\
 &\sqrt{\mu}\|\mathbf{w}_x\|_{L^4(Q_T)} \rightarrow 0.
   \end{split}
    \end{equation*}

    Next we show the strong convergence of $(u_x,\mathbf{b}_x,\theta_x)$ in $L^2(Q_T)$. Multiplying \eqref{e1}$_2$ with $\mu=\mu_j$ by $(u-\overline u)$ and integrating over $Q_T$, we have
\begin{equation*}
\begin{split}
  &\lambda\iint_{Q_T} \big(u_{x}-\overline u_x\big)^2dxdt+\lambda\iint_{Q_T}\overline u_x\big(u_{x}-\overline u_x\big)dxdt\\
  &=-\iint_{Q_T}\left[(\rho u)_{t}
  +\left(\rho u^2+\gamma\rho\theta +\frac12|\mathbf{b}|^2\right)_x\right](u-\overline u)   dxdt,
 \end{split}
\end{equation*}
which together with Lemmas \ref{2.4}, \ref{2.10}  and  \ref{2.14} implies that
$$
u_{x}\rightarrow\overline u_x~\hbox{\rm strongly in}~ L^2(Q_T) ~\hbox{\rm as}~\mu_j\rightarrow 0.
$$
Similarly, one has
\begin{equation*}
\begin{split}
&(\mathbf{b}_{x},\theta_{x})\rightarrow (\overline{\mathbf{b}}_x,\overline \theta_x)~\hbox{\rm strongly  in}~ L^2(Q_T) ~\hbox{\rm as}~\mu_j\rightarrow 0.
 \end{split}
\end{equation*}
Furthermore, since  from  \eqref{ux4}, \eqref{bxbound} and
\eqref{theta5}, we have
\begin{equation*}
\begin{split}
&(u_x, \theta_x)\rightarrow(\overline u_x,\overline\theta_x)~~\hbox{\rm strongly  in}~ L^{s_1}(Q_T)
 ~\hbox{\rm as}~\mu_j\rightarrow 0,~\forall s_1 \in [1, 6),\\
 &\mathbf{b}_{x}\rightarrow \overline{\mathbf{b}}_x ~~\hbox{\rm strongly  in}~ L^{s_2}(Q_T)
 ~\hbox{\rm as}~\mu_j\rightarrow 0,~\forall s_2 \in [1, 4).\\
  \end{split}
\end{equation*}

Then, it is easy to check that  $(\overline\rho,\overline u,
\overline{\mathbf{w}}, \overline{\mathbf{b}}, \overline\theta)$
  satisfies \eqref{equations}.

   On the other hand, one can see from Theorem 1.1(iii) that the limit problem
\eqref{equations} admits at most one solution in  $\mathbb{F}$. Thus, the above convergence relations hold for any $\mu_j\rightarrow 0$. The proof of Theorem 1.1(ii) is then completed.

\subsection{Proof of Theorem 1.1(iii)}
The proof is divided into several steps among which the fourth step is the key that can be proved in terms of the boundary estimates of $\mathbf{w}_x$. For convenience, we set
\begin{equation*}
\begin{split}
&\widetilde{\rho}=\rho-\overline\rho,\quad \widetilde{u}= u-\overline
u,\quad\widetilde{\mathbf{w}}=\mathbf{w}-\overline{\mathbf{w}},
\quad\widetilde{\mathbf{b}}=\mathbf{b}-\overline{\mathbf{b}},\quad
\widetilde{\theta}=\theta-\overline\theta,\\[1mm]
&\mathbb{H}(t)=\|(\widetilde{\rho}, \widetilde{u},\widetilde{\mathbf{w}}, \widetilde{\mathbf{b}}, \widetilde{\theta})\|_{L^2(\Omega)}^2,\\[1mm]
&D(t)=1+ \|(u_x,\mathbf{b}_x, \overline u_x, \overline{\mathbf{b}}_x,\overline{\theta}_x)\|_{L^\infty(\Omega)}^2+\|(\overline u_t, \overline\theta_t, \overline u_x, \overline{\mathbf{b}}_x,\overline{\theta}_x)\|_{L^2(\Omega)}^2.
 \end{split}
\end{equation*}
Clearly, $D(t)\in L^1(0, T)$.

\indent{\bf Step 1} We claim that
\begin{equation}\label{rho4}
\begin{split}
  \int_\Omega \widetilde{\rho}^2dx\leq  \epsilon\iint_{Q_t} \widetilde{u}_x^2dxds+\frac{C}{\epsilon}\int_0^tD(s)\mathbb{H}(s)ds,~~ \forall \epsilon \in (0, 1).
  \end{split}
\end{equation}From \eqref{e1}$_1$ and \eqref{equations}$_1$ it follows that
$$
\widetilde{\rho}_t=-\big(\rho \widetilde{u} +\overline
u\widetilde{\rho}\big)_x.
$$
Multiplying it by $\widetilde{\rho}$ and integrating over $Q_t$, we
have by Young's inequality
\begin{equation*}\label{rho1}
\begin{split}
  \frac12\int_\Omega \widetilde{\rho}^2dx=&-\iint_{Q_t}\big(\rho \widetilde{u}_x\widetilde{\rho}+\rho_x\widetilde{u}\widetilde{\rho} \big)dxds -\frac12\iint_{Q_t}\overline
  u_x\widetilde{\rho}^2dxds\\
    \leq&\frac{\epsilon}{4}\iint_{Q_t} \widetilde{u}_x^2dxds+C\iint_{Q_t} \widetilde{u}^2\rho_x^2dxds\\
    & +\frac{C}{\epsilon}
    \int_0^t(1+\|\overline u_x\|_{L^\infty(\Omega)})\int_\Omega\widetilde{\rho}^2dxds, \forall \epsilon \in (0,
    1).
  \end{split}
\end{equation*}
Since from \eqref{rho}, we have
\begin{equation*}\label{rho3}
\begin{split}
   C\iint_{Q_t} \widetilde{u}^2\rho_x^2dxds
   \leq C\int_0^t\|\widetilde{u}\|_{L^\infty(\Omega)}^2 ds\leq \frac{\epsilon}{4}\iint_{Q_t} \widetilde{u}_x^2dxds+\frac{C}{\epsilon}\iint_{Q_t} \widetilde{u}^2dxds.
  \end{split}
\end{equation*}
Thus, the claim \eqref{rho4} is proved.

\indent{\bf Step 2}  We claim that
\begin{equation}\label{u14}
\begin{split}
   \int_\Omega  \widetilde{u}^2dx+ \iint_{Q_t} \widetilde{u}_x ^2dxds\leq   C \int_0^tD(s)\mathbb{H}(s)ds.
\end{split}
\end{equation}
Using
\eqref{e1}$_1$ and \eqref{equations}$_1$, we derive from \eqref{e1}$_2$ and \eqref{equations}$_2$ that
  \begin{equation*}
\begin{split}
 &\big(\rho \widetilde{u} \big)_t+\big(\rho u\widetilde{u}\big)_x+\widetilde{\rho}\overline u_t
 +(\rho u-\overline\rho~\overline u)\overline u_x +\gamma(\rho\theta-\overline\rho\overline\theta)_x
 +\frac12(|\mathbf{b}|^2 -|\overline{\mathbf{b}}|^2 )_x =\lambda \widetilde{u}_{xx}.
  \end{split}
  \end{equation*}
Multiplying it by $\widetilde{u}$ and integrating  over $Q_t$, we have
\begin{equation}\label{uu}
\begin{split}
  &\frac12\int_\Omega\rho \widetilde{u}^2dx+\lambda\iint_{Q_t} \widetilde{u}_x ^2dxds\\
  &=-\iint_{Q_t}\widetilde{\rho} \overline u_t \widetilde{u}dxds
 -\iint_{Q_t}(\rho u-\overline\rho~\overline u)\overline u_x\widetilde{u} dxds
 +\gamma\iint_{Q_t} (\rho\theta-\overline\rho\overline\theta) \widetilde{u}_x dxds
 \\
 &\quad+\frac12\iint_{Q_t} (|\mathbf{b}|^2 -|\overline{\mathbf{b}}|^2 ) \widetilde{u}_x dxds   =:\sum_{i=1}^4 I_i.
\end{split}
\end{equation}
Observe that $\rho u-\overline\rho~\overline u=\rho \widetilde{u} +\overline u \widetilde{\rho} $ and
$\rho \theta-\overline\rho\overline\theta =\rho\widetilde{\theta}+\overline\theta \widetilde{\rho} $. We have
\begin{equation}\label{u1}
\begin{split}
  &I_1+I_2 \\
  &\leq  C \int_0^t\|\widetilde{u}\|_{L^\infty(\Omega)} \int_\Omega|\widetilde{\rho}|(|\overline u_t|+|\overline u_x|) dt
  +C\iint_{Q_t} |\overline u_x| \widetilde{u}^2dxds\\
 & \leq  C \int_0^t\left(\int_\Omega\widetilde{\rho}^2 dx\right)^{1/2}
  \left(\int_\Omega(\overline u_t^2+\overline u_x^2)dx\right)^{1/2}  \|\widetilde{u}\|_{L^\infty(\Omega)} dt +C\int_0^t\|\overline u_x\|_{L^\infty(\Omega)}\int_\Omega \widetilde{u}^2dxds\\
    &\leq C \int_0^t \left(\int_\Omega(\overline u_t^2+\overline u_x^2)dx\right)\left(\int_\Omega\widetilde{\rho}^2 dx\right)dt
    +C\int_0^t \|\widetilde{u}\|_{L^\infty(\Omega)}^2 ds\\
    &\quad +C\int_0^t\|\overline u_x\|_{L^\infty(\Omega)}\int_\Omega \widetilde{u}^2dxds \leq C \int_0^tD(s)\mathbb{H}(s)ds
    + \frac\lambda4\iint_{Q_t}   \widetilde{u}_x ^2 dxds,
\end{split}
\end{equation}
and
\begin{equation}\label{u2}
\begin{split}
  I_3&\leq  \frac\lambda4\iint_{Q_t}   \widetilde{u}_x ^2 dxds+C\iint_{Q_t}  (\widetilde{\theta}^2+\widetilde{\rho}^2) dxds.
\end{split}
\end{equation}
Utilizing the estimates
$\|(\mathbf{b},\overline{\mathbf{b}})\|_{L^\infty(Q_T)}\leq C$, we
have
\begin{equation}\label{b01}
\begin{split}
  I_4&\leq  \frac\lambda4\iint_{Q_t}   \widetilde{u}_x ^2 dxds+C\iint_{Q_t} |\widetilde{\mathbf{b}}|^2 dxds.
\end{split}
\end{equation}
Substituting \eqref{u1}-\eqref{b01} into \eqref{uu} completes the proof to \eqref{u14}.

\indent{\bf Step 3} We claim that
\begin{equation}\label{Theta}
\begin{split}
 &\int_\Omega\widetilde{\theta}^2dx+ \iint_{Q_t} \widetilde{\theta}_x^2dxds\\
 &\leq C\sqrt{\mu}
 +\epsilon\iint_{Q_t}\big(\widetilde{u}_x^2+|\widetilde{\mathbf{b}}_x|^2\big)dxds
 +\frac{C}{\epsilon}\int_0^tD(s)\mathbb{H}(s)ds,~~\forall\epsilon\in(0, 1).
\end{split}
\end{equation}
From \eqref{e1}$_5$ and
\eqref{equations}$_5$ it follows that
 \begin{equation*}\label{ll}
\begin{split}
& \big(\rho\widetilde{\theta}\big)_t+(\rho u\widetilde{\theta})_x+\widetilde{\rho}\overline\theta_t+(\rho \widetilde{u}+\overline u \widetilde{\rho})\overline\theta_x+\gamma\rho\theta \widetilde{u}_x
 +\gamma\big(\rho\widetilde{\theta}+\widetilde{\rho}\overline\theta\big)\overline u_x =\big[\kappa(\rho,\theta)\widetilde{\theta}_x\big]_x\\
 &+\big[(\kappa(\rho,\theta)-\kappa(\overline\rho,\overline\theta))\overline\theta_x\big]_x +\lambda(u_x^2-\overline u_x^2) +\mu|\mathbf{w}_x|^2+\nu(|\mathbf{b}_x|^2 -|\overline{\mathbf{b}}_x|^2).
  \end{split}
  \end{equation*}
 Multiplying it by $\widetilde{\theta}$ and integrating over $Q_t$, we obtain
  \begin{equation}\label{21}
\begin{split}
 & \frac12\int_\Omega\rho\widetilde{\theta}^2dx+ \iint_{Q_t}\kappa\widetilde{\theta}_x^2dxds \\
 &=-\iint_{Q_t}\widetilde{\rho}\widetilde{\theta}\overline\theta_tdxdt-\iint_{Q_t}(\rho \widetilde{u}+\overline u \widetilde{\rho})\widetilde{\theta}\overline\theta_xdxds
 -\gamma\iint_{Q_t} \rho\theta \widetilde{u}_x \widetilde{\theta} dxds\\
&\quad-\gamma\iint_{Q_t}\rho \overline u_x\widetilde{\theta}^2dxds -\gamma\iint_{Q_t}\overline\theta\overline u_x\widetilde{\rho}\widetilde{\theta} dxds
-\iint_{Q_t}\overline\theta_x[\kappa(\rho,\theta)-\kappa(\overline\rho,\overline\theta)]\widetilde{\theta}_xdxds \\
&\quad+\lambda\iint_{Q_t} (u_x+\overline u_x)\widetilde{u}_x \widetilde{\theta} dxds+\mu\iint_{Q_t} |\mathbf{w}_x|^2 \widetilde{\theta} dxds\\
&\quad+\nu\iint_{Q_t}(|\mathbf{b}_x|^2 -|\overline{\mathbf{b}}_x|^2)\widetilde{\theta} dxds=:\sum_{i=1}^9 E_i.
\end{split}
\end{equation}
  By H\"{o}lder's inequality and Young's inequality, we have
\begin{equation*}\label{u8}
\begin{split}
 &E_1+E_2+E_5 \leq  C\int_0^t\left(\int_\Omega\big(\widetilde{\rho}^2+ \widetilde{u}^2\big)dx\right)^{1/2}\left(\int_\Omega\widetilde{\theta}^2(\overline\theta_t^2+\overline\theta_x^2+\overline u_x^2)dx\right)^{1/2}dt\\
 &\leq  C\int_0^t \left(\int_\Omega(\overline\theta_t^2+\overline\theta_x^2+\overline u_x^2)dx\right)^{1/2} \left(\int_\Omega(\widetilde{\rho}^2+ \widetilde{u}^2)dx\right)^{1/2}\|\widetilde{\theta}\|_{L^\infty(\Omega)}dt\\
 &\leq  C\int_0^t\left(\int_\Omega(\overline\theta_t^2+\overline\theta_x^2+\overline u_x^2)dx\right)\left(\int_\Omega\big(\widetilde{\rho}^2+ \widetilde{u}^2\big)dx\right)dt
 +\int_0^t\|\widetilde{\theta}\|_{L^\infty(\Omega)}^2ds\\
 &\leq   C\int_0^tD(s)\mathbb{H}(s)ds
  +\frac{\kappa_1}{4}\iint_{Q_t}\widetilde{\theta}_x^2dxds+C\iint_{Q_t}\widetilde{\theta}^2dxds\\
&\leq  C\int_0^tD(s)\mathbb{H}(s)ds +\frac{\kappa_1}{4}\iint_{Q_t}\widetilde{\theta}_x^2dxds.\\
\end{split}
\end{equation*}
By Young's inequality, we have
\begin{equation*}\label{u111}
\begin{split}
 & E_3+E_4 +E_7\\
 &\leq \epsilon\iint_{Q_t}  \widetilde{u}_x ^2dxds + \frac{C}{\epsilon}\int_0^t\big(1+\|u_x\|_{L^\infty(\Omega)}^2+\|\overline u_x\|_{L^\infty(\Omega)}^2\big)\int_{\Omega} \widetilde{\theta}^2dxds\\
 &\leq \epsilon\iint_{Q_t}  \widetilde{u}_x ^2dxds + \frac{C}{\epsilon}\int_0^tD(s)\mathbb{H}(s)ds,\quad \forall\epsilon \in (0, 1).
\end{split}
\end{equation*}
By the mean value theorem and $C^{-1}\leq \rho, \overline\rho, \theta, \overline\theta \leq C$, we obtain
$$
|\kappa(\rho,\theta)-\kappa(\overline\rho,\overline\theta)|\leq C(|\widetilde{\rho} |+|\widetilde{\theta}|),
$$
so
 \begin{equation*}\label{u9}
\begin{split}
 E_6 \leq & \frac{\kappa_1}{4}\iint_{Q_t} \widetilde{\theta}_x^2dxds+C\iint_{Q_t}|\overline\theta_x|^2\big(\widetilde{\rho}^2+\widetilde{\theta}^2\big)dxds \\   \leq & \frac{\kappa_1}{4} \iint_{Q_t} \widetilde{\theta}_x^2dxds+C\int_0^t\|\overline\theta_x\|^2_{L^\infty(\Omega)}\int_{\Omega}\big(\widetilde{\rho}^2+\widetilde{\theta}^2\big)dxds\\
    \leq &  \frac{\kappa_1}{4} \iint_{Q_t} \widetilde{\theta}_x^2dxds+C\int_0^tD(s)\mathbb{H}(s)ds.
\end{split}
\end{equation*}
By \eqref{wx4}, we have
\begin{equation*}\label{u10}
\begin{split}
 E_8\leq C\iint_{Q_t}\widetilde{\theta}^2dxds
 +C\mu^2\iint_{Q_t}  |\mathbf{w}_x|^4 dxds
 \leq  C\sqrt{\mu} + C\int_0^tD(s)\mathbb{H}(s)ds.
\end{split}
\end{equation*}
As to $E_9$, we have by the relation:
$|\mathbf{b}_x|^2-|\overline{\mathbf{b}}_x|^2=(\mathbf{b}_x+\overline{\mathbf{b}}_x)\cdot\widetilde{\mathbf{b}}_x$
\begin{equation*}
\begin{split}
 E_9\leq & \epsilon\iint_{Q_t}|\widetilde{\mathbf{b}}_x|^2dxds
 +\frac{C}{\epsilon}\int_0^t\big(\|\mathbf{b}_x\|_{L^\infty(\Omega)}^2+\|\overline{\mathbf{b}}_x\|_{L^\infty(\Omega)}^2\big)
 \int_{\Omega}\widetilde{\theta}^2dxds\\
 \leq & \epsilon\iint_{Q_t}|\widetilde{\mathbf{b}}_x|^2dxds
 +\frac{C}{\epsilon}\int_0^tD(s)\mathbb{H}(s)ds,~~\forall\epsilon\in(0, 1).
\end{split}
\end{equation*}
Substituting the results into \eqref{21} completes the proof to
\eqref{Theta}.

\indent{\bf Step 4} We claim that
\begin{equation}\label{0v3}
\begin{split}
  & \int_\Omega |\widetilde{\mathbf{w}}|^2dx   \leq C\sqrt{\mu} +\epsilon\iint_{Q_t} |\widetilde{\mathbf{b}}_x|^2 dxds + \frac{C}{\epsilon}\int_0^tD(s)\mathbb{H}(s)ds,\quad\forall \epsilon \in (0, 1).
  \end{split}
  \end{equation}
 From \eqref{e1}$_3$  and
  \eqref{equations}$_3$, we have
 \begin{equation*}
\begin{split}
& \rho\widetilde{\mathbf{w}}_t+ \rho u \widetilde{\mathbf{w}}_x+  \rho \widetilde{u} \overline{\mathbf{w}}_x-\widetilde{\mathbf{b}}_x+\frac{\widetilde{\rho}}{\overline\rho}\overline{\mathbf{b}}_x= \mu \mathbf{w}_{xx}.
  \end{split}
  \end{equation*}
  Multiplying it by $\widetilde{\mathbf{w}}$ and integrating over $Q_t$, we have
  \begin{equation}\label{0v33}
\begin{split}
   \frac12\int_\Omega \rho|\widetilde{\mathbf{w}}|^2dx
 & =\mu\iint_{Q_t}  \mathbf{w}_{xx}\cdot\widetilde{\mathbf{w}} dxds
  -\iint_{Q_t} \rho\widetilde{u}\overline{\mathbf{w}}_x\cdot\widetilde{\mathbf{w}} \\
  &\quad+ \iint_{Q_t}  \widetilde{\mathbf{b}}_x\cdot\widetilde{\mathbf{w}}dxds
-\iint_{Q_t}\frac{\widetilde{\rho}}{\overline\rho}
  \overline{\mathbf{b}}_x \cdot\widetilde{\mathbf{w}}dxds\\
  &\leq C\mu^2\iint_{Q_t} |\mathbf{w}_{xx}|^2dxds+C\iint_{Q_t} \widetilde{u}^2|\overline{\mathbf{w}}_x|^2dxds
 \\
 &\quad +\frac{C}{\epsilon}\int_0^tD(s)\mathbb{H}(s)ds  +\epsilon\iint_{Q_t} |\widetilde{\mathbf{b}}_x|^2 dxds,\quad\forall \epsilon\in(0, 1).
  \end{split}
  \end{equation}
 Observe that
   \begin{equation*}
\begin{split}
 & |\widetilde{u}(x,t)|= |u(x,t)-\overline u(x,t)|=\left|\int_0^x\widetilde{u}_x dx\right|\leq \left(\int_0^1\widetilde{u}_x^2dx\right)^{1/2}\omega^{1/2}(x),
 \forall x \in [0, 1/2],\\
 &|\widetilde{u}(x,t)|= |u(x,t)-\overline u(x,t)|=\left|\int_x^1\widetilde{u}_x dx\right|\leq \left(\int_0^1\widetilde{u}_x^2dx\right)^{1/2}\omega^{1/2}(x),
 \forall x \in [1/2, 1].\\
 \end{split}
  \end{equation*}
We have
    \begin{equation*}
\begin{split}
  |\widetilde{u}(x,t)|^2  \leq \left(\int_0^1\widetilde{u}_x^2dx\right) \omega(x),\quad \forall (x,t) \in \overline Q_T,
  \end{split}
  \end{equation*}
  which together with Lemma \ref{2.13} and \eqref{u14} gives
   \begin{equation*}
\begin{split}
 \iint_{Q_t} \widetilde{u}^2|\overline{\mathbf{w}}_x|^2dxds
  \leq & \int_0^t\left(\int_0^1\widetilde{u}_x^2dx\right) \left(\int_\Omega|\overline{\mathbf{w}}_x|^2\omega dx\right)ds
  \leq  C\iint_{Q_t}\widetilde{u}_x^2dxds\\
  \leq & C \int_0^tD(s)\mathbb{H}(s)ds.
  \end{split}
  \end{equation*}
Substituting it into \eqref{0v33} completes the proof to \eqref{0v3}.

\indent{\bf Step 5} We claim that
 \begin{equation}\label{B}
\begin{split}
  & \int_\Omega |\widetilde{\mathbf{b}}|^2dx +
   \iint_{Q_t} |\widetilde{\mathbf{b}}_x|^2dxds\leq C\int_0^t D(s)\mathbb{H}(s)ds.
  \end{split}
  \end{equation}
   From  \eqref{e1}$_4$  and \eqref{equations}$_4$, we have
   \begin{equation*}
\begin{split}
& \widetilde{\mathbf{b}}_t+  \big(u \widetilde{\mathbf{b}}\big)_x
+  \big(\widetilde{u}\overline{\mathbf{b}}\big)_x- \widetilde{\mathbf{w}}_x-\nu\widetilde{\mathbf{b}}_{xx}=0.
  \end{split}
  \end{equation*}
  Multiplying it by $\widetilde{\mathbf{b}}$ and integrating over $Q_t$ yield
  \begin{equation*}
\begin{split}
  &\frac12\int_\Omega |\widetilde{\mathbf{b}}|^2dx +
  \nu\iint_{Q_t} |\widetilde{\mathbf{b}}_x|^2dxds \\
 & = -\frac12\iint_{Q_t}u_x|\widetilde{\mathbf{b}}|^2dxds+\iint_{Q_t} \widetilde{u}  \overline{\mathbf{b}}\cdot\widetilde{\mathbf{b}}_xdxds- \iint_{Q_t} \widetilde{\mathbf{b}}_x \cdot\widetilde{\mathbf{w}}dxds\\
  &\leq  \frac{\nu}{2}\iint_{Q_t} |\widetilde{\mathbf{b}}_x|^2 dxds +C\int_0^t D(s)\mathbb{H}(s)ds.
  \end{split}
  \end{equation*}
  Thus, the  claim \eqref{B} is proved.

  Adding   the above five inequalities and taking a small $\epsilon>0$, we complete  the proof of Theorem 1.1(iii)
  by Gronwall's inequality.

Thus, the proof to Theorem 1.1 is  complete.

\section{Proof of Theorem 1.3}

\begin{lemma}\label{3.1}Let  \eqref{kappa}, \eqref{assumption1} and \eqref{vw} hold. Then $\overline{\mathbf{b}}=\overline{\mathbf{w}}=0$. Moreover,
\begin{equation*}
\begin{split}
&\sup\limits_{0<t<T}\int_\Omega \big(|\mathbf{b}|^2+|\mathbf{w}|^2\big) dx+\iint_{Q_T}|\mathbf{b}_x|^2dxdt \leq C\sqrt{\mu}.
\end{split}
\end{equation*}
 \end{lemma}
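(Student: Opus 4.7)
The plan is to first show that any solution of the limit system \eqref{equations} in $\mathbb{F}$ with vanishing initial data $\mathbf{w}_0=\mathbf{b}_0=\mathbf{0}$ must satisfy $\overline{\mathbf{w}}=\overline{\mathbf{b}}=0$, and then read off the quantitative $\sqrt{\mu}$-rate directly from Theorem \ref{existencethm}(iii).

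For the first step I would perform an energy estimate on \eqref{equations}. Multiplying \eqref{equations}$_3$ by $\overline{\mathbf{w}}$ and integrating in $x$, using \eqref{equations}$_1$ in the standard way and noting that the convective boundary flux vanishes by $\overline u|_{x=0,1}=0$, gives
\begin{equation*}
\tfrac{1}{2}\tfrac{d}{dt}\int_\Omega \overline\rho |\overline{\mathbf{w}}|^2\,dx = \int_\Omega \overline{\mathbf{w}}\cdot\overline{\mathbf{b}}_x\,dx.
\end{equation*}
Multiplying \eqref{equations}$_4$ by $\overline{\mathbf{b}}$ and integrating, using $\overline{\mathbf{b}}|_{x=0,1}=0$ together with $\overline u|_{x=0,1}=0$, gives
\begin{equation*}
\tfrac{1}{2}\tfrac{d}{dt}\int_\Omega |\overline{\mathbf{b}}|^2\,dx + \nu\int_\Omega |\overline{\mathbf{b}}_x|^2\,dx + \tfrac{1}{2}\int_\Omega \overline u_x|\overline{\mathbf{b}}|^2\,dx = \int_\Omega \overline{\mathbf{w}}_x\cdot\overline{\mathbf{b}}\,dx.
\end{equation*}
The key observation is that $\overline{\mathbf{b}}|_{x=0,1}=0$ permits the integration by parts $\int_\Omega\overline{\mathbf{w}}\cdot\overline{\mathbf{b}}_x\,dx = -\int_\Omega\overline{\mathbf{w}}_x\cdot\overline{\mathbf{b}}\,dx$ with no boundary contribution, which is legitimate at the regularity available in $\mathbb{F}$ since $\overline{\mathbf{w}}\in L^\infty(0,T;W^{1,1}(\Omega))$ has traces and $\overline{\mathbf{b}}\in L^\infty(0,T;H^1(\Omega))\hookrightarrow L^\infty(Q_T)$. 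Adding the two identities therefore cancels the cross terms and leaves
\begin{equation*}
\tfrac{d}{dt}\int_\Omega\bigl(\overline\rho|\overline{\mathbf{w}}|^2+|\overline{\mathbf{b}}|^2\bigr)dx + 2\nu\int_\Omega|\overline{\mathbf{b}}_x|^2\,dx \le \|\overline u_x\|_{L^\infty(\Omega)}\int_\Omega|\overline{\mathbf{b}}|^2\,dx.
\end{equation*}
Since $\overline{\mathbf{w}}(\cdot,0)=\overline{\mathbf{b}}(\cdot,0)=\mathbf{0}$ by \eqref{vw}, and $\|\overline u_x\|_{L^\infty(\Omega)}\in L^2(0,T)\subset L^1(0,T)$ from the definition of $\mathbb{F}$, Gronwall's inequality forces $\overline{\mathbf{w}}=\overline{\mathbf{b}}=0$ in $Q_T$.

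Once this is in hand, the quantitative bound is immediate from Theorem \ref{existencethm}(iii): with $\overline{\mathbf{w}}=\overline{\mathbf{b}}=0$ the differences $\mathbf{w}-\overline{\mathbf{w}}$ and $\mathbf{b}-\overline{\mathbf{b}}$ collapse to $\mathbf{w}$ and $\mathbf{b}$, so that theorem yields $\|\mathbf{w}\|_{L^\infty(0,T;L^2(\Omega))}+\|\mathbf{b}\|_{L^\infty(0,T;L^2(\Omega))}+\|\mathbf{b}_x\|_{L^2(Q_T)}=O(\mu^{1/4})$, and squaring gives the advertised $C\sqrt{\mu}$ bound. The main obstacle is the careful justification of the cross-term cancellation under the limited regularity of $\mathbb{F}$; once that is handled, everything else is either a standard energy estimate on the limit system or a direct invocation of a rate already proved in Theorem \ref{existencethm}.
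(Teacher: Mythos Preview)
Your proposal is correct and follows essentially the same route as the paper: multiply \eqref{equations}$_3$ by $\overline{\mathbf{w}}$ and \eqref{equations}$_4$ by $\overline{\mathbf{b}}$, add so that the cross terms $\int_\Omega\overline{\mathbf{w}}\cdot\overline{\mathbf{b}}_x\,dx+\int_\Omega\overline{\mathbf{w}}_x\cdot\overline{\mathbf{b}}\,dx$ vanish, apply Gronwall with zero initial data to get $\overline{\mathbf{w}}=\overline{\mathbf{b}}=0$, and then invoke Theorem~\ref{existencethm}(iii) for the $\sqrt{\mu}$ rate. Your explicit remark on the regularity needed for the integration by parts is a detail the paper leaves implicit, but otherwise the arguments coincide.
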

\begin{proof}From Theorem 1.1(iii), it suffices to show that $\overline{\mathbf{b}}=\overline{\mathbf{w}}=0$.
To this end, multiplying the equations \eqref{equations}$_3$ and
\eqref{equations}$_4$ by $\overline{\mathbf{w}}$ and
$\overline{\mathbf{b}}$, respectively, and integrating over $Q_t$,
we have
\begin{equation*}
\begin{split}
&\frac12\int_\Omega\overline\rho |\overline{\mathbf{w}}|^2dx-\iint_{Q_t} \overline{\mathbf{b}}_x\cdot\overline{\mathbf{w}}dxds=0,\\
 &\frac12\int_\Omega |\overline{\mathbf{b}}|^2 dx+\nu\iint_{Q_t}|\overline{\mathbf{b}}_x|^2dxds+\iint_{Q_t} \overline{\mathbf{b}}_x\cdot\overline{\mathbf{w}}dxds+\frac12\iint_{Q_t}\overline u_x |\overline{\mathbf{b}}|^2dxds=0.
\end{split}
\end{equation*}
 Adding the two equations yields
 \begin{equation*}
\begin{split}
&\frac12\int_\Omega
\big(\overline\rho|\overline{\mathbf{w}}|^2+|\overline{\mathbf{b}}|^2
\big)dx +\nu\iint_{Q_t}|\overline{\mathbf{b}}_x|^2dxdt\leq
\frac12\int_0^t\|\overline
u_x\|_{L^\infty(\Omega)}\int_\Omega|\overline{\mathbf{b}}|^2dxds,
\end{split}
\end{equation*}
which together with Gronwall's inequality completes the proof.

\end{proof}

\begin{lemma}\label{3.2}Let  \eqref{kappa}, \eqref{assumption1} and \eqref{vw} hold. Then
\begin{equation*}
\begin{split}
  \sup\limits_{0<t<T}\int_{\Omega}|\mathbf{w}_{x}|^2\omega^2dx  \leq C\sqrt{\mu}.\\
\end{split}
\end{equation*}
\end{lemma}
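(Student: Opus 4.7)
The plan is to adapt the argument of Lemma \ref{2.8}, exploiting the stronger bounds provided by \eqref{vw} and Lemma \ref{3.1}. I would multiply \eqref{w12} by $\mathbf{w}_{xx}\omega^2$, integrate over $Q_t$, and perform the usual integration by parts in $x$ (the boundary contributions vanishing since $\omega(0)=\omega(1)=0$). Using $\mathbf{w}_0\equiv\mathbf{0}$ to kill $\int_\Omega|\mathbf{w}_{0x}|^2\omega^2\,dx$, one arrives at the energy identity
\begin{equation*}
\tfrac12\int_\Omega|\mathbf{w}_x|^2\omega^2\,dx + \mu\iint_{Q_t}\frac{|\mathbf{w}_{xx}|^2\omega^2}{\rho}\,dxds = R_1+R_2+R_3,
\end{equation*}
with $R_1=-2\iint_{Q_t}\mathbf{w}_t\mathbf{w}_x\omega\omega'\,dxds$, $R_2=\iint_{Q_t}u\mathbf{w}_x\mathbf{w}_{xx}\omega^2\,dxds$, and $R_3=-\iint_{Q_t}\mathbf{b}_x\mathbf{w}_{xx}\omega^2/\rho\,dxds$. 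The goal is to show $R_1+R_2+R_3\leq C\sqrt\mu$ modulo quantities absorbable by the left-hand side and Grönwall-type pieces $\int_0^t\|u_x\|_{L^\infty}\int_\Omega|\mathbf{w}_x|^2\omega^2\,dxds$.

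For $R_1$ I would substitute $\mathbf{w}_t=(\mu\mathbf{w}_{xx}+\mathbf{b}_x)/\rho-u\mathbf{w}_x$; the $\mu$-cross piece is Cauchy-Schwarzed into $\tfrac\mu4\iint|\mathbf{w}_{xx}|^2\omega^2/\rho + C\mu\iint|\mathbf{w}_x|^2$, and this last quantity is $O(\sqrt\mu)$ because Lemma \ref{2.13} gives $\sqrt\mu\sup_t\int|\mathbf{w}_x|^2\,dx\leq C$, whence $\mu\iint|\mathbf{w}_x|^2\,dxdt\leq C\sqrt\mu$. The $u$-contributions reduce, via $|u|\leq\|u_x\|_{L^\infty}\omega$ and $|\omega\omega'|=\omega$, to Grönwall terms; the $\mathbf{b}_x$-cross contributions are dominated by $\epsilon\iint|\mathbf{w}_x|^2\omega^2 + C_\epsilon\iint|\mathbf{b}_x|^2\leq\epsilon\iint+C_\epsilon\sqrt\mu$ by Lemma \ref{3.1}. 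The term $R_2$ is integrated by parts into similar Grönwall and absorbable pieces.

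The main obstacle is $R_3$, as a direct Cauchy-Schwarz yields $(C/\mu)\iint|\mathbf{b}_x|^2=O(1/\sqrt\mu)$, which is useless. To overcome this, I would integrate by parts in $x$,
\begin{equation*}
R_3 = \iint_{Q_t}\frac{\mathbf{b}_{xx}\omega^2\mathbf{w}_x}{\rho}\,dxds + 2\iint_{Q_t}\frac{\mathbf{b}_x\omega\omega'\mathbf{w}_x}{\rho}\,dxds - \iint_{Q_t}\frac{\mathbf{b}_x\omega^2\rho_x\mathbf{w}_x}{\rho^2}\,dxds,
\end{equation*}
and then replace $\nu\mathbf{b}_{xx}=\mathbf{b}_t+(u\mathbf{b})_x-\mathbf{w}_x$ from \eqref{e1}$_4$. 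The $-\mathbf{w}_x$ piece gives $-\tfrac1\nu\iint|\mathbf{w}_x|^2\omega^2/\rho$ which, transferred to the LHS, \emph{strengthens} the dissipation; the $(u\mathbf{b})_x$-piece is $O(\sqrt\mu)$ by Cauchy-Schwarz using $\sup_t\int|\mathbf{b}|^2$, $\iint|\mathbf{b}_x|^2\leq C\sqrt\mu$ (Lemma \ref{3.1}) together with $\int_0^T(\|u\|_{L^\infty}^2+\|u_x\|_{L^\infty}^2)\,dt\leq C$. The $\mathbf{b}_t$-piece is treated by integration by parts in $t$; $\mathbf{b}|_{t=0}=\mathbf{0}$ kills the initial contribution, while the resulting boundary term $\int\mathbf{b}(x,t)\omega^2\mathbf{w}_x/\rho\,dx$ is bounded by $\epsilon\int|\mathbf{w}_x|^2\omega^2 + C_\epsilon\sqrt\mu$ via Young's inequality and $\|\mathbf{b}(t)\|_{L^2}^2\leq C\sqrt\mu$, and the interior term produced by differentiating $\omega^2\mathbf{w}_x/\rho$ in $t$ is handled by a further $x$-integration by parts together with the smallness of $\mathbf{b}$, $\mathbf{b}_x$ from Lemma \ref{3.1} and the existing bound on $\|\mathbf{w}_t\|_{L^2(Q_T)}$ from Lemma \ref{2.10}. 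The two remaining lower-order $\mathbf{b}_x$-pieces in $R_3$ are immediate by Lemma \ref{3.1}. Grönwall's inequality with $\|u_x\|_{L^\infty}\in L^1(0,T)$ then closes the bound and yields the desired $\sup_t\int|\mathbf{w}_x|^2\omega^2\,dx\leq C\sqrt\mu$.
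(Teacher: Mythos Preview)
Your overall strategy---multiplying \eqref{w12} by $\mathbf{w}_{xx}\omega^2$ and estimating the three pieces $R_1,R_2,R_3$---is the right starting point, and your treatment of $R_1$ and $R_2$ is fine. The gap is in $R_3$. After integrating by parts and substituting $\nu\mathbf{b}_{xx}=\mathbf{b}_t+(u\mathbf{b})_x-\mathbf{w}_x$, you propose to handle the resulting $\mathbf{b}_t$-term by integration by parts in $t$, then in $x$, appealing to ``the smallness of $\mathbf{b},\mathbf{b}_x$ from Lemma~\ref{3.1} and the existing bound on $\|\mathbf{w}_t\|_{L^2(Q_T)}$ from Lemma~\ref{2.10}.'' But at this point in the argument Lemma~\ref{3.3} has \emph{not} been proved (it uses Lemma~\ref{3.2}), so you only have $\|\mathbf{w}_t\|_{L^2(Q_T)}\le C$. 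The quantities from Lemma~\ref{3.1} are of size $\mu^{1/4}$ in $L^2$: $\|\mathbf{b}\|_{L^\infty(0,T;L^2)}\le C\mu^{1/4}$ and $\|\mathbf{b}_x\|_{L^2(Q_T)}\le C\mu^{1/4}$. Pairing these with $\|\mathbf{w}_t\|_{L^2(Q_T)}=O(1)$ via Cauchy--Schwarz yields terms of order $\mu^{1/4}$, not $\sqrt{\mu}$. Thus your scheme, as written, proves only $\sup_t\int_\Omega|\mathbf{w}_x|^2\omega^2\,dx\le C\mu^{1/4}$.

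The paper avoids this loss by \emph{not} eliminating $\mathbf{b}_{xx}$ through the equation. Instead it keeps, after the Lemma~\ref{2.8}-type estimate,
\[
\int_\Omega|\mathbf{w}_x|^2\omega^2\,dx \;\le\; C\sqrt{\mu}\;+\;C\iint_{Q_t}|\mathbf{b}_{xx}|^2\omega^2\,dxds\;+\;\text{(Gr\"onwall terms)},
\]
and couples this with a Lemma~\ref{2.7}-type estimate (multiply \eqref{e1}$_4$ by $\mathbf{b}_{xx}\omega^2$, now exploiting $\mathbf{b}_0\equiv0$ and Lemma~\ref{3.1}) to get
\[
\iint_{Q_t}|\mathbf{b}_{xx}|^2\omega^2\,dxds \;\le\; C\sqrt{\mu}\;+\;C\iint_{Q_t}|\mathbf{w}_x|^2\omega^2\,dxds.
\]
Substituting the second into the first gives a closed Gr\"onwall inequality with inhomogeneity $C\sqrt{\mu}$. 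Your argument can be repaired---for instance by re-substituting $\mathbf{w}_t=(\mu\mathbf{w}_{xx}+\mathbf{b}_x)/\rho-u\mathbf{w}_x$ once more instead of invoking the black-box $L^2$-bound from Lemma~\ref{2.10}---but the paper's two-inequality coupling is both shorter and sidesteps the $\mathbf{b}_t$-term entirely.
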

\begin{proof}
 Note that $\mathbf{w}_0=\mathbf{b}_0\equiv0$. By means of Lemmas \ref{2.13} and 3.1,  similar arguments as in Lemmas \ref{2.7} and \ref{2.8} give
\begin{equation*}
\begin{split}
    & \iint_{Q_t}|\mathbf{b}_{xx}|^2\omega^2dxds
    \leq C\sqrt{\mu}+C\iint_{Q_t}  |\mathbf{w}_{x}|^2\omega^2  dxds,\\[1mm]
    &\int_{\Omega}|\mathbf{w}_{x}|^2\omega^2dx
    \leq C\sqrt{\mu}     +C\iint_{Q_t}|\mathbf{b}_{xx}|^2\omega^2dxds,
  \end{split}
\end{equation*}
so
\begin{equation*}
\begin{split}
   \int_{\Omega}|\mathbf{w}_{x}|^2\omega^2dx
    \leq C\sqrt{\mu}     +C\iint_{Q_t}|\mathbf{w}_{x}|^2\omega^2dxds,
  \end{split}
\end{equation*}
and then, Gronwall's inequality yields the desired result.
The proof is complete.
\end{proof}

\begin{lemma}\label{3.3}Let  \eqref{kappa}, \eqref{assumption1} and \eqref{vw} hold. Then
\begin{equation*}
\begin{split}
  \iint_{Q_T}|\mathbf{w}_{t}|^2dxdt \leq C\sqrt{\mu}.\\
\end{split}
\end{equation*}
\end{lemma}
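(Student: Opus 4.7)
The plan is to read off the estimate directly from the PDE \eqref{w12}, namely
\begin{equation*}
\mathbf{w}_t=\frac{\mu}{\rho}\mathbf{w}_{xx}-u\mathbf{w}_x+\frac{1}{\rho}\mathbf{b}_x,
\end{equation*}
by bounding the $L^2(Q_T)$-norm of each of the three terms on the right by $C\sqrt{\mu}$, using the $\sqrt{\mu}$-smallness already harvested in Lemmas~\ref{3.1} and \ref{3.2} (together with Lemmas~\ref{2.10} and \ref{2.13}). Since $1/\rho$ is uniformly bounded, it suffices to prove
\begin{equation*}
\mu^2\iint_{Q_T}|\mathbf{w}_{xx}|^2\,dxdt+\iint_{Q_T}u^2|\mathbf{w}_x|^2\,dxdt+\iint_{Q_T}|\mathbf{b}_x|^2\,dxdt\leq C\sqrt{\mu}.
\end{equation*}

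The first term is handled by Lemma~\ref{2.13}, which gives $\mu^{3/2}\iint|\mathbf{w}_{xx}|^2\,dxdt\leq C$, so multiplying by $\sqrt{\mu}$ yields $\mu^2\iint|\mathbf{w}_{xx}|^2\,dxdt\leq C\sqrt{\mu}$. The third term is exactly the content of Lemma~\ref{3.1}.

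The interesting term is the transport term. Here I use the pointwise bound \eqref{u3}, $|u(x,t)|\leq\|u_x\|_{L^\infty(\Omega)}\,\omega(x)$, to transfer the weight $\omega$ onto $\mathbf{w}_x$:
\begin{equation*}
\iint_{Q_T}u^2|\mathbf{w}_x|^2\,dxdt\leq\int_0^T\|u_x\|_{L^\infty(\Omega)}^2\!\left(\int_\Omega|\mathbf{w}_x|^2\omega^2\,dx\right)dt\leq\Big(\sup_{0<t<T}\int_\Omega|\mathbf{w}_x|^2\omega^2\,dx\Big)\int_0^T\|u_x\|_{L^\infty}^2\,dt.
\end{equation*}
The supremum is $\leq C\sqrt{\mu}$ by Lemma~\ref{3.2}, while $\int_0^T\|u_x\|_{L^\infty}^2\,dt\leq C$ follows from the interpolation \eqref{ux} together with $\iint u_{xx}^2\,dxdt\leq C$ from Lemma~\ref{2.10}. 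Combining the three bounds, squaring the equation and integrating, yields the claim.

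The only real obstacle was already cleared in Lemma~\ref{3.2}: gaining the $\sqrt{\mu}$-factor on $\int|\mathbf{w}_x|^2\omega^2\,dx$ (in the absence of initial magnetic data, via the coupling with $\mathbf{b}_{xx}$). Given that, the rest is routine algebra, and no further Gronwall argument is needed.
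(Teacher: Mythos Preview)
Your proof is correct and follows essentially the same approach as the paper: both start from \eqref{w12}, split into the three terms, and bound them using Lemma~\ref{2.13} for $\mu^2|\mathbf{w}_{xx}|^2$, Lemma~\ref{3.1} for $|\mathbf{b}_x|^2$, and the pointwise inequality \eqref{u3} combined with Lemma~\ref{3.2} and $\int_0^T\|u_x\|_{L^\infty}^2\,dt\leq C$ for the transport term.
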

\begin{proof}
Using Lemmas \ref{2.10}, \ref{2.13}, \ref{3.1} and \ref{3.2} and noticing \eqref{u3}, we derive from  \eqref{w12} that
\begin{equation*}
\begin{split}
     \iint_{Q_T}|\mathbf{w}_{t}|^2dxdt &\leq C\mu^2\iint_{Q_T}|\mathbf{w}_{xx}|^2dxdt+C\iint_{Q_T}|\mathbf{b}_{x}|^2dxdt+C\iint_{Q_T}u^2|\mathbf{w}_{x}|^2dxdt\\
     &\leq C\sqrt{\mu}+\int_0^T\|u_x\|_{L^\infty(\Omega)}^2\int_\Omega|\mathbf{w}_{x}|^2\omega^2dxdt\leq C\sqrt{\mu}.\\
     \end{split}
\end{equation*}
The proof is complete.
\end{proof}

\begin{lemma}\label{3.4}Let  \eqref{kappa}, \eqref{assumption1} and \eqref{vw} hold. Then
\begin{equation*}
\begin{split}
 \sup\limits_{0<t<T}\int_{\Omega}|\mathbf{b}_{x}|^2 dx  +\iint_{Q_T}|\mathbf{b}_{t}|^2dxdt \leq C\sqrt{\mu}.\\
\end{split}
\end{equation*}
\end{lemma}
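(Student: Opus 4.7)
The plan is to mimic the proof of Lemma \ref{2.9}, now exploiting the additional smallness of $\mathbf{w}$, $\mathbf{b}$, $\mathbf{w}_t$ and $\mathbf{b}_x$ in $L^2$ coming from Lemmas \ref{3.1}--\ref{3.3} (which is only available under the extra assumption $\mathbf{w}_0=\mathbf{b}_0\equiv \mathbf{0}$). First I would multiply $\eqref{e1}_4$ by $\mathbf{b}_t$ and integrate over $Q_t$ to obtain the basic identity
\begin{equation*}
\iint_{Q_t}|\mathbf{b}_t|^2\,dxds+\frac{\nu}{2}\int_\Omega|\mathbf{b}_x|^2\,dx
=\iint_{Q_t}\bigl[\mathbf{w}_x-(u\mathbf{b})_x\bigr]\cdot\mathbf{b}_t\,dxds,
\end{equation*}
using $\mathbf{b}_0\equiv\mathbf{0}$ on the left.

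For the $\mathbf{w}_x\cdot\mathbf{b}_t$ term I would integrate by parts first in $t$ (noting $\mathbf{b}_0\equiv\mathbf{0}$) and then in $x$, using $\mathbf{b}|_{x=0,1}=\mathbf{0}$ to kill the boundary terms, exactly as in \eqref{b10}. This rewrites the term as
\begin{equation*}
\iint_{Q_t}\mathbf{w}_x\cdot\mathbf{b}_t\,dxds
=-\int_\Omega \mathbf{w}(x,t)\cdot\mathbf{b}_x(x,t)\,dx+\iint_{Q_t}\mathbf{w}_t\cdot\mathbf{b}_x\,dxds.
\end{equation*}
Both pieces are then absorbed by Cauchy--Schwarz against $\frac{\nu}{4}\int|\mathbf{b}_x|^2\,dx$ plus remainders controlled by $\int|\mathbf{w}|^2\,dx\le C\sqrt{\mu}$ (Lemma \ref{3.1}) and $\iint|\mathbf{w}_t|^2\le C\sqrt{\mu}$ (Lemma \ref{3.3}), giving a total contribution $\le C\sqrt{\mu}+\frac{\nu}{4}\int_\Omega|\mathbf{b}_x|^2\,dx$.

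For $-\iint(u\mathbf{b})_x\cdot\mathbf{b}_t\,dxds=-\iint(u_x\mathbf{b}+u\mathbf{b}_x)\cdot\mathbf{b}_t\,dxds$, the piece $\iint u\mathbf{b}_x\cdot\mathbf{b}_t$ is handled by Young's inequality and $\|u\|_{L^\infty(Q_T)}\le C$ together with $\iint|\mathbf{b}_x|^2\le C\sqrt{\mu}$ (Lemma \ref{3.1}). The genuinely delicate piece is $\iint u_x\mathbf{b}\cdot\mathbf{b}_t$; after Young's inequality it requires controlling $\iint u_x^2|\mathbf{b}|^2\,dxdt$ by $C\sqrt{\mu}$. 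Here I use the fact that $\mathbf{b}|_{x=0}=\mathbf{0}$, so that
\begin{equation*}
\|\mathbf{b}\|_{L^\infty(\Omega)}^2\le 2\|\mathbf{b}\|_{L^2(\Omega)}\|\mathbf{b}_x\|_{L^2(\Omega)},
\end{equation*}
and combine this with Lemma \ref{3.1} and $\|u_x\|_{L^2(\Omega)}\le C$ (Lemma \ref{2.10}) to get
\begin{equation*}
\iint_{Q_T}u_x^2|\mathbf{b}|^2\,dxdt
\le C\sup_{t}\|\mathbf{b}\|_{L^2}\cdot\Bigl(\int_0^T\|\mathbf{b}_x\|_{L^2}^2dt\Bigr)^{1/2}\Bigl(\int_0^T\|u_x\|_{L^2}^4dt\Bigr)^{1/2}\le C\sqrt{\mu}.
\end{equation*}

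Collecting these bounds yields
\begin{equation*}
\sup_{0<t<T}\int_\Omega|\mathbf{b}_x|^2\,dx+\iint_{Q_T}|\mathbf{b}_t|^2\,dxdt\le C\sqrt{\mu},
\end{equation*}
and no Gronwall loop is actually needed because every remainder is already $O(\sqrt{\mu})$. The hardest step is the third one: one has to resist bounding $u_x^2|\mathbf{b}|^2$ by $\|u_x\|_{L^\infty}^2\|\mathbf{b}\|_{L^2}^2$ (which would only give a Gronwall-type estimate losing the $\sqrt{\mu}$) and instead exploit the Dirichlet boundary condition on $\mathbf{b}$ to trade one power of $\mathbf{b}_x$ (small in $\mu$) for the $L^\infty$ control.
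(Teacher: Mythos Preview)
Your proof is correct and follows essentially the same route as the paper: multiply \eqref{e1}$_4$ by $\mathbf{b}_t$, integrate, treat $\iint\mathbf{w}_x\cdot\mathbf{b}_t$ by the same double integration by parts as in \eqref{b10} (now using Lemmas \ref{3.1} and \ref{3.3} to get $C\sqrt{\mu}$), and bound $\iint|(u\mathbf{b})_x|^2$ directly by $C\sqrt{\mu}$.

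One remark: your final comment is misleading. The ``naive'' bound you warn against,
\[
\iint_{Q_T} u_x^2|\mathbf{b}|^2\,dxdt \le \int_0^T\|u_x\|_{L^\infty(\Omega)}^2\Bigl(\int_\Omega|\mathbf{b}|^2\,dx\Bigr)dt
\le C\sqrt{\mu}\int_0^T\|u_x\|_{L^\infty(\Omega)}^2\,dt \le C\sqrt{\mu},
\]
works immediately, since $\sup_t\|\mathbf{b}\|_{L^2(\Omega)}^2\le C\sqrt{\mu}$ by Lemma \ref{3.1} and $\int_0^T\|u_x\|_{L^\infty}^2\,dt\le C$ by \eqref{ux} and Lemma \ref{2.10}. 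This is in fact what the paper does (it just says ``by similar arguments as in \eqref{b11}''). Your interpolation $\|\mathbf{b}\|_{L^\infty}^2\le 2\|\mathbf{b}\|_{L^2}\|\mathbf{b}_x\|_{L^2}$ is valid and also yields $C\sqrt{\mu}$, but it is an unnecessary detour rather than the ``hardest step''.
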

\begin{proof}
Multiplying \eqref{e1}$_4$ by $\mathbf{b}_t$,  integrating over
$Q_t$ and noticing $\mathbf{w}_0=0$, we have
\begin{equation}\label{b999}
\begin{split}
  &\frac{\nu}{2}\int_\Omega|\mathbf{b}_{x}|^2dx+ \iint_{Q_t} |\mathbf{b}_t|^2
  dxdt
  = \iint_{Q_t} \mathbf{w}_x\cdot\mathbf{b}_t dxdt  - \iint_{Q_t}(u\mathbf{b})_{x}\cdot\mathbf{b}_t dxdt.  \\
  \end{split}
\end{equation}
Using Lemmas \ref{2.10}, \ref{3.1} and \ref{3.3}, we obtain by the similar
arguments as in \eqref{b10} and \eqref{b11}
\begin{equation*}
\begin{split}
   \iint_{Q_t}   \mathbf{w}_x  \cdot\mathbf{b}_t dxdt
  &  \leq C\sqrt{\mu}+\frac{\nu}{4}\int_\Omega|\mathbf{b}_x|^2dx,
  \end{split}
\end{equation*}
and
\begin{equation*}
\begin{split}
   &-\iint_{Q_t}  (u\mathbf{b})_{x} \cdot\mathbf{b}_t dxdt \leq C\sqrt{\mu}+ \frac12  \iint_{Q_t} |\mathbf{b}_t|^2
   dxdt.
\end{split}\end{equation*}
Substituting them into \eqref{b999}, we complete the proof.
\end{proof}

Now we can prove Theorem 1.3.
 \vskip0.3cm
\noindent{\bf Proof of Theorem 1.3}~~By Theorem 1.1 (ii)-(iii) and Lemma \ref{3.1}, one sees that there exists a unique solution
$(\overline \rho,\overline u, \mathbf{0}, \mathbf{0}, \overline\theta)$ for
the limit problem \eqref{equations}   in $\mathbb{F}$.

Next, we are ready to show the second part of this theorem. Denote $\omega_\delta : [0,
1]\rightarrow [0, 1]$ for $\delta\in (0, 1/2)$ by
\begin{equation*}
\omega_\delta (x)=\left\{\begin{split}&x,&&0\leq x\leq \delta,\\
&\delta,&&\delta\leq x\leq 1-\delta,\\
&1-x,&&1-\delta\leq x \leq 1.\\
\end{split}\right.
\end{equation*}
Multiplying \eqref{w12} by $\mathbf{w}_{xx}\omega_\delta
^n(x)~(n=1,2,\cdots)$ and integrating over $Q_t$, we have
\begin{equation}\label{0v101}
\begin{split}
    \mu\iint_{Q_t}|\mathbf{w}_{xx}|^2\frac{ \omega_\delta ^n}{\rho}dxds
    &=\iint_{Q_t} \mathbf{w}_t\cdot \mathbf{w}_{xx} \omega_\delta ^n dxdt+\iint_{Q_t} u\mathbf{w}_x\cdot \mathbf{w}_{xx} \omega_\delta ^n dxds\\
    &\quad-\iint_{Q_t}  \mathbf{b}_x\cdot \mathbf{w}_{xx} \frac{ \omega_\delta ^n}{\rho} dxds.
  \end{split}
\end{equation}
Integrating by parts, using \eqref{w12} and noticing $\mathbf{w}_0=0$, we have
\begin{equation}\label{wtxx}
\begin{split}
    &\iint_{Q_t} \mathbf{w}_t\cdot \mathbf{w}_{xx} \omega_\delta ^n dxdt\\
   &=-\frac{1}{2}\int_\Omega|\mathbf{w}_x|^2 \omega_\delta ^n   dx
  -n\iint_{Q_t} \mathbf{w}_t\cdot \mathbf{w}_{x} \omega_\delta ^{n-1} \omega_\delta ' dxdt\\
   &=-\frac{1}{2}\int_\Omega|\mathbf{w}_x|^2 \omega_\delta ^n  dx
   -n\iint_{Q_t} \left(\frac{\mu}{\rho}\mathbf{w}_{xx}-u\mathbf{w}_x
   +\frac{\mathbf{b}_x}{\rho}\right)\cdot \mathbf{w}_{x} \omega_\delta ^{n-1}  \omega_\delta '  dxds\\
   &\leq  -\frac{1}{2}\int_\Omega|\mathbf{w}_x|^2 \omega_\delta ^n   dx+ \frac{\mu}{2}\iint_{Q_t}|\mathbf{w}_{xx}|^2\frac{ \omega_\delta ^n}{\rho}dxds +C_n\mu\iint_{Q_t}|\mathbf{w}_x|^2 \omega_\delta ^{n-2}
   dxds\\
   &\quad+C_n\iint_{Q_t}|u||\mathbf{w}_x|^2 \omega_\delta ^{n-1}|\omega_\delta '| dxds -n\iint_{Q_t}
    \mathbf{b}_x\cdot \mathbf{w}_{x}   \frac{\omega_\delta ^{n-1}  \omega_\delta '}{\rho} dxds,\quad n=2,3,\cdots.
  \end{split}
\end{equation}
Here and in what follows,     $C$ and $C_n$ are positive constants
independent of $\mu$ and $\delta$.

By the mean value theorem and $u(1,t)=u(0,t)=0$, we have
\begin{equation}\label{u33}
\begin{split}
|u(x,t)|\leq \|u_x\|_{L^\infty(\Omega)}\omega_\delta (x),\quad\forall x\in [0, \delta]\cup[1-\delta,1],\\
\end{split}
\end{equation}
which together with the definition of $\omega_\delta $ gives
\begin{equation*}
\begin{split}
    \iint_{Q_t}|u||\mathbf{w}_x|^2 \omega_\delta ^{n-1}|\omega_\delta '|  dxds=&\int_0^t\hspace{-2mm}\int_0^\delta|u||\mathbf{w}_x|^2 \omega_\delta ^{n-1}dxds+\int_0^t\hspace{-2mm}\int_{1-\delta}^1|u||\mathbf{w}_x|^2 \omega_\delta ^{n-1}dxds\\
    \leq& \int_0^t\|u_x\|_{L^\infty(\Omega)} \int_\Omega|\mathbf{w}_x|^2 \omega_\delta ^ndxds,\\
  \end{split}
\end{equation*}
thus
\begin{equation*}
\begin{split}
   & \iint_{Q_t} \mathbf{w}_t\cdot \mathbf{w}_{xx} \omega_\delta ^n dxdt\\
   &\leq -\frac{1}{2}\int_\Omega|\mathbf{w}_x|^2 \omega_\delta ^n   dx
   + \frac{\mu}{2}\iint_{Q_t}|\mathbf{w}_{xx}|^2\frac{\omega_\delta ^{n}}{\rho}dxds +C_n\mu\iint_{Q_t}|\mathbf{w}_x|^2 \omega_\delta ^{n-2}
   dxds\\
   &\quad+C\int_0^t \|u_x\|_{L^\infty(\Omega)} \int_\Omega|\mathbf{w}_x|^2 \omega_\delta ^ndxds-n\iint_{Q_t}
     \mathbf{b}_x\cdot \mathbf{w}_{x} \frac{\omega_\delta ^{n-1}  \omega_\delta '}{\rho}  dxds.\\
  \end{split}
\end{equation*}
To estimate the second integral on the right-hand side of
\eqref{0v101}, we have by integrating by parts  and noticing
\eqref{u33}
\begin{equation*}
\begin{split}
  \iint_{Q_t} u\mathbf{w}_x\cdot \mathbf{w}_{xx} \omega_\delta ^n dxds
  =&-\frac{1}{2}\iint_{Q_t} |\mathbf{w}_x|^2[u_x \omega_\delta ^n+nu \omega_\delta ^{n-1} \omega_\delta ']
  dxds\\
  \leq&C_n\int_0^t \|u_x\|_{L^\infty(\Omega)} \int_\Omega|\mathbf{w}_x|^2 \omega_\delta ^ndxds.\\
  \end{split}
\end{equation*}
As to the third term on the right-hand side of \eqref{0v101}, we
have
\begin{equation*}
\begin{split}
   &-\iint_{Q_t} \mathbf{b}_x\cdot \mathbf{w}_{xx}\frac{ \omega_\delta ^n}{\rho} dxds\\[1mm]
   &=\iint_{Q_t} \mathbf{w}_x\cdot \mathbf{b}_{xx} \frac{ \omega_\delta ^n}{\rho}dxds-\iint_{Q_t} \mathbf{w}_x\cdot \mathbf{b}_{x} \frac{\omega_\delta ^n\rho_x}{\rho^2} dxds+n\iint_{Q_t} \mathbf{b}_{x}\cdot\mathbf{w}_x\frac{ \omega_\delta ^{n-1} \omega_\delta '}{\rho} dxds\\[1mm]
& \leq C\iint_{Q_t}  |\mathbf{b}_{xx}|^2 \omega_\delta ^n dxds+C\iint_{Q_t}  |\mathbf{w}_x|^2 \omega_\delta ^n dxds+C\iint_{Q_t}  |\mathbf{b}_x|^2 \omega_\delta ^n \rho_x^2dxds\\[1mm]
&\quad+n\iint_{Q_t} \mathbf{b}_{x}\cdot\mathbf{w}_x\frac{ \omega_\delta ^{n-1} \omega_\delta '}{\rho} dxds\\[1mm]
&\leq C\sqrt{\mu}\delta^{n-1}+C\iint_{Q_t}  |\mathbf{b}_{xx}|^2 \omega_\delta ^n dxds + C\iint_{Q_t}  |\mathbf{w}_x|^2 \omega_\delta ^n dxds\\[1mm]
&\quad+n\iint_{Q_t} \mathbf{b}_{x}\cdot\mathbf{w}_x\frac{ \omega_\delta ^{n-1} \omega_\delta
'}{\rho} dxds,
  \end{split}
\end{equation*}
where we  used the fact  by \eqref{rho}$_1$, Lemma \ref{3.1} and $0\leq \omega_\delta
(x)\leq \delta$
\begin{equation*}
\begin{split}
     \iint_{Q_t}  |\mathbf{b}_x|^2 \omega_\delta ^n \rho_x^2dxds\leq&C\int_0^t  \left\||\mathbf{b}_x|^2 \omega_\delta ^n\right\|_{L^\infty(\Omega)} ds
     \leq  C \iint_{Q_t}  \left|(|\mathbf{b}_x|^2 \omega_\delta ^n)_x\right| dxds\\[1mm]
    \leq & C_n\iint_{Q_t}    |\mathbf{b}_{x}|^2| \omega_\delta ^{n-1} \omega_\delta '|dxds+C \iint_{Q_t}    |\mathbf{b}_x\cdot\mathbf{b}_{xx}|  \omega_\delta ^n dxds\\[1mm]
    \leq &C_n\sqrt{\mu}\delta^{n-1} +C\iint_{Q_t}    |\mathbf{b}_{xx}|^2  \omega_\delta ^n dxds.
  \end{split}
\end{equation*}
Substituting the above results into \eqref{0v101} yields
\begin{equation}\label{w000}
\begin{split}
     &\int_\Omega|\mathbf{w}_x|^2 \omega_\delta ^n   dx+\mu \iint_{Q_t} |\mathbf{w}_{xx}|^2 \omega_\delta ^n   dxds\\
     & \leq C_n\sqrt{\mu}\delta^{n-1}+C_n\mu\iint_{Q_t}|\mathbf{w}_x|^2 \omega_\delta ^{n-2}
   dxds \\
    &\quad+ C \int_0^t \big[1+\|u_x\|_{L^\infty(\Omega)}\big] \int_\Omega|\mathbf{w}_x|^2 \omega_\delta ^n  dxds+C\iint_{Q_t} |\mathbf{b}_{xx}|^2 \omega_\delta ^n   dxds.
  \end{split}
\end{equation}
It remains to treat the relation between the terms $\iint_{Q_t}
|\mathbf{b}_{xx}|^2 \omega_\delta ^n dxds$ and  $\iint_{Q_t}
|\mathbf{w}_{x}|^2 \omega_\delta ^n dxds$. To this end, we multiply
\eqref{e1}$_4$ by $\mathbf{b}_{xx} \omega_\delta ^n(x) ~(n=2,3,\cdots)$ and integrate over $Q_t$ to obtain
\begin{equation}\label{b1111}
\begin{split}
    \nu\iint_{Q_t} |\mathbf{b}_{xx}|^2 \omega_\delta ^n dxds
    &=\iint_{Q_t} \mathbf{b}_t\cdot \mathbf{b}_{xx} \omega_\delta ^n dxdt+\iint_{Q_t}  (u\mathbf{b})_x\cdot \mathbf{b}_{xx} \omega_\delta ^n  dxds\\
    &\quad-\iint_{Q_t} \mathbf{w}_x\cdot \mathbf{b}_{xx} \omega_\delta ^n dxds.
  \end{split}
\end{equation}
To estimate the first term on right-hand side of \eqref{b1111}, we
use Young's inequality, Lemma  \ref{3.4} and $0\leq \omega_\delta
(x)\leq \delta$  to obtain
\begin{equation*}
\begin{split}
   \iint_{Q_t} \mathbf{b}_t\cdot \mathbf{b}_{xx} \omega_\delta ^n dxdt
   \leq C\sqrt{\mu}\delta^{n}+\frac{\nu}{4}\iint_{Q_t}|\mathbf{b}_{xx}|^2 \omega_\delta ^n dxdt.
  \end{split}
\end{equation*}
Next we deal with the second term on right-hand side of
\eqref{b1111}. By $0\leq \omega_\delta (x)\leq \delta$ and Lemmas
\ref{2.10} and \ref{3.1}, we have
\begin{equation*}
\begin{split}
    \iint_{Q_t}  |(u\mathbf{b})_x|^2 \omega_\delta ^n  dxds \leq &C\iint_{Q_t} u^2|\mathbf{b}_{x}|^2 \omega_\delta ^n  dxds+C\iint_{Q_t} u_x^2|\mathbf{b} |^2 \omega_\delta ^n  dxds\\
  \leq & C\sqrt{\mu}\delta^n+C\delta^n\int_0^t\|u_x\|_{L^\infty(\Omega)}^2\int_\Omega|\mathbf{b} |^2  dxds\leq C\sqrt{\mu}\delta^n.
  \end{split}
\end{equation*}
As to the third term on right-hand side of \eqref{b1111}, we have by Young's inequality
\begin{equation*}
\begin{split}
   - \iint_{Q_t} \mathbf{w}_x\cdot \mathbf{b}_{xx} \omega_\delta ^n dxds\leq C\iint_{Q_t} |\mathbf{w}_x|\omega_\delta ^n dxds
 +\frac{\nu}{4}\iint_{Q_t} |\mathbf{b}_{xx}|\omega_\delta ^n dxds.
 \end{split}
\end{equation*}
Substituting them into \eqref{b1111} yields
\begin{equation}\label{bel}
\begin{split}
     & \iint_{Q_t} |\mathbf{b}_{xx}|^2 \omega_\delta ^n   dxds\leq C\sqrt{\mu}\delta^{n}+C \iint_{Q_t}|\mathbf{w}_x|^2 \omega_\delta ^{n}dxds,
  \end{split}
\end{equation}
and   inserting it into \eqref{w000}  and  using Gronwall's
inequality, we obtain the iteration
\begin{equation}\label{iteration}
\begin{split}
     &\int_\Omega|\mathbf{w}_x|^2 \omega_\delta ^n   dx  \leq C_n \sqrt{\mu}\delta^{n-1}  + C_n\mu\iint_{Q_t} |\mathbf{w}_x|^2 \omega_\delta ^{n-2}   dxds,\quad n=2,3,\cdots.
  \end{split}
\end{equation}
Note that  the above results still hold for $n=1$. Since the term
$-\iint_{Q_t} \frac{\mu}{\rho}\mathbf{w}_{xx}\cdot \mathbf{w}_{x}
\omega_\delta ' dxds$ in the equality of \eqref{wtxx} with $n=1$ can
be dealt with as follows
\begin{equation*}
\begin{split}
     &-\iint_{Q_t} \frac{\mu}{\rho}\mathbf{w}_{xx}\cdot \mathbf{w}_{x} \omega_\delta ' dxds\leq C\sqrt{\mu}\iint_{Q_t} |\mathbf{w}_{x}|^2 dxds+C\mu^{3/2}\iint_{Q_t}|\mathbf{w}_{xx}|^2dxds\leq C,
        \end{split}
\end{equation*}
where we used Lemma \ref{2.13},  a similar argument as above gives,
instead of \eqref{iteration},
\begin{equation}\label{wx3}
\begin{split}
     &\int_\Omega|\mathbf{w}_x|^2 \omega_\delta     dx  \leq C.
  \end{split}
\end{equation}
So, we derive from \eqref{iteration} that
\begin{equation*}
\begin{split}
     &\int_\Omega|\mathbf{w}_x|^2 \omega_\delta ^2   dx  \leq  C(\sqrt{\mu}\delta+\sqrt{\mu}),\\
     &\int_\Omega|\mathbf{w}_x|^2 \omega_\delta ^3   dx  \leq C\big(\sqrt{\mu}\delta^2+\mu\big).\\
        \end{split}
\end{equation*}
Next, taking $n=4,5,6,7$ in
\eqref{iteration}, respectively, we get
\begin{equation*}
\begin{split}
            &\int_\Omega|\mathbf{w}_x|^2 \omega_\delta ^4   dx  \leq C\big(\sqrt{\mu}\delta^3+\mu^{3/2}\delta+\mu^{3/2}\big),\\
      &\int_\Omega|\mathbf{w}_x|^2 \omega_\delta ^5   dx  \leq C\big(\sqrt{\mu}\delta^4+\mu^{3/2}\delta^2+\mu^{2}\big),\\
        &\int_\Omega|\mathbf{w}_x|^2 \omega_\delta ^6   dx  \leq C\big(\sqrt{\mu}\delta^5+\mu^{3/2}\delta^3+\mu^{5/2}\delta+\mu^{5/2}\big),\\
         &\int_\Omega|\mathbf{w}_x|^2 \omega_\delta ^7  dx
         \leq C\big(\sqrt{\mu}\delta^6+\mu^{3/2}\delta^4+\mu^{5/2}\delta^2+\mu^3\big),\\
           \end{split}
\end{equation*}
thus,    an induction gives
\begin{equation}\label{0w1}
\begin{split}
      \int_\Omega|\mathbf{w}_x|^2 \omega_\delta ^n dx\leq \left\{
              \begin{split}
           & C_n\big(\sqrt{\mu}\delta^{n-1}+\mu^{3/2}\delta^{n-3} +\cdots+\mu^{(n-2)/2}\delta^{2}+ \mu^{(n-1)/2}\big)~~(n=\hbox{\rm odd}),\\
       & C_n\big(\sqrt{\mu}\delta^{n-1}+\mu^{3/2}\delta^{n-3} +\cdots+\mu^{(n-1)/2}\delta
       +\mu^{(n-1)/2}\big)~~(n=\hbox{\rm even}),\\
         \end{split}\right.
  \end{split}
\end{equation}
where $n=2,3,\cdots,$ which together with the definition of
$\omega_\delta $ gives
\begin{equation}\label{0w0}
\begin{split}
      \int_{\delta}^{1-\delta}|\mathbf{w}_x|^2 dx\leq \left\{
              \begin{split}
           & C_n\big(\tau+\tau^3+\cdots+\tau^{n-2}\big)+C_n\mu^{(n-1)/2}/\delta^n~(n=\hbox{\rm odd}),\\
       & C_n\big(\tau+\tau^3+\cdots+\tau^{n-1}\big)+C_n\mu^{(n-1)/2}/\delta^n~(n=\hbox{\rm even}),\\
         \end{split}\right.
  \end{split}
\end{equation}
where  $\delta\in (0, 1/2), \tau=\sqrt{\mu}/\delta.$

 On the other hand,  we have by the mean value theorem and
Lemma 3.1
\begin{equation*}
\begin{split}
     \|\mathbf{w}\|_{L^\infty(\delta, 1-\delta)} \leq & \frac{1}{1-2\delta}\int_\delta^{1-\delta}|\mathbf{w}|dx+\int_\delta^{1-\delta}|\mathbf{w}_x|dx\\
        \leq &C\mu^{1/4}+\left(\int_\delta^{1-\delta}|\mathbf{w}_x|^2dx\right)^{1/2},\quad \forall\delta\in(0, 1/4),
     \end{split}
\end{equation*}
which together with \eqref{0w0} implies that  any function
$\delta(\mu)$ with $\delta(\mu)\downarrow0$ and
$\frac{\mu^{(n-1)/(2n)}}{\delta(\mu)}=\frac{\mu^{1/2-1/(2n)}}{\delta(\mu)}\rightarrow 0$ as $ \mu
\rightarrow0$ satisfies
\begin{equation}\label{v99}
\begin{split}
   &\lim\limits_{\mu\rightarrow 0} \|\mathbf{w}\|_{L^\infty(0, T;L^\infty(\delta(\mu), 1-\delta(\mu)))}=0.
  \end{split}
\end{equation}
Since $n$ can be arbitrarily large, we see that $\delta(\mu)=  \mu^\alpha $  satisfies \eqref{v99}  for any $\alpha\in (0, 1/2)$.   The proof  is  completed.

\section*{Acknowledgments}
The authors would like to thank Professor Tong Yang at City
University of Hong Kong for valuable discussions on Lemma \ref{2.14} during their visit to City University of Hong Kong.
 The research was supported  in part by the NSFC (grants 11571062,11571380,11401078), Guangdong Natural Science
Foundation (grant 2014A030313161), the Program for Liaoning
Excellent Talents in University (grant LJQ2013124)
  and  the Fundamental Research Fund  for the Central Universities  (grant DC201502050202).
\par

\end{document}